\documentclass[12pt,reqno,sort]{elsarticle}

\usepackage{amsfonts,color,amsmath,amssymb,fancyhdr}
\usepackage{amsthm}
\usepackage{graphicx}
\usepackage{tabularx}
\usepackage{soul}
\usepackage{color, xcolor}
\usepackage{indentfirst}
\usepackage{comment}

\usepackage{etoolbox}

\usepackage{float}
\usepackage{booktabs}
\usepackage{graphicx}
\usepackage{tabularx}

\usepackage[colorlinks,linkcolor=blue, anchorcolor=red,citecolor=green]{hyperref}
\usepackage{enumerate,enumitem}
\usepackage{fullpage}
\usepackage{multirow}
\usepackage{makecell,array}

\usepackage{comment}

\usepackage{adjustbox}

\usepackage{graphicx,anysize,float,booktabs,geometry}
\usepackage{caption} 
\usepackage{hyperref}

\hypersetup{citecolor=red, linkcolor=blue, colorlinks=true}

\def\Box{\vcenter{\vbox{\hrule\hbox{\vrule
     \vbox to 8.8pt{\hbox to 10pt{}\vfill}\vrule}\hrule}}}

\allowdisplaybreaks

\newcommand{\mc}{\mathcal}

\newcommand{\cL}{\mathcal{L}}

\newcommand{\cP}{\mathcal{P}}

\newcommand\Aut{{\rm Aut}}

\newcommand{\Fix}{{\rm Fix}}

\newcommand\PSU{{\rm PSU}}

\newcommand\PSL{{\rm PSL}}

\newcommand\GL{{\rm GL}}

\newcommand\PSp{{\rm PSp}}
\newcommand\Sp{{\rm Sp}}

\newcommand{\PA}{\mathcal{P}}
\newcommand{\LA}{\mathcal{L}}
\newcommand{\Ta}{T_{\alpha}}

\theoremstyle{plain}
\newtheorem{theorem}{Theorem}[section]

\newtheorem{lemma}[theorem]{Lemma}

\numberwithin{equation}{section}

\theoremstyle{remark}

\def\<{\langle}
\def\>{\rangle}

\def\a{\alpha}

\begin{document}

\newcommand{\stopthm}{\begin{flushright}
		\(\box \;\;\;\;\;\;\;\;\;\; \)
\end{flushright}}
\newcommand{\symfont}{\fam \mathfam}

\title{Generalized quadrangles with a point-primitive and line-primitive automorphism group with socle $\PSp_4(q)$} 

\date{}
\author[add1]{Meizi Ou}\ead{meiziou@zju.edu.cn}
\author[add2]{Jianbing Lu \corref{cor1}}\ead{jianbinglu@nudt.edu.cn}\cortext[cor1]{Corresponding author}

\address[add1]{School of Mathematical Sciences, Zhejiang University, Hangzhou 310058,   China}
\address[add2]{College of Science, National University of Defense Technology, Changsha 410073, China}

\begin{abstract}
  Let $\mc{S}$ be a finite thick generalized quadrangle, and let $G\leq \Aut(\mc{S})$ act primitively on both  points and   lines. Building on the almost simple reduction for point-primitive and line-primitive actions, we study the case where the socle of $G$ is the projective symplectic group $\PSp_4(q)$ with $q\ge 3$. We show that, up to duality, either $\mc{S}\cong W(3,q)$ for $q\geq 3$, or $q=3$ and $\mc{S}\cong H(3,4)$.
\newline 

	\noindent\text{Keywords:} Generalized quadrangle; point-primitive; line-primitive.
	
	\noindent\text{Mathematics Subject Classification}:  51E12 20B15 20B25
\end{abstract}	

\maketitle

\section{Introduction}

Generalized quadrangles were introduced by Tits \cite{tits1959trialite} as a geometric tool for studying groups of Lie type.  More generally, a \emph{generalized $n$-gon} \cite{van2012generalized} is a point--line incidence geometry whose incidence graph has diameter $n$ and girth $2n$. A finite generalized $n$-gon has order $(s,t)$ if each line is incident with $s+1$ points and each point is incident with $t+1$ lines; it is \emph{thick} if $s,t>1$. By the Feit--Higman Theorem~\cite{feit1964nonexistence}, finite thick generalized $n$-gons exist only for $n\in\{3,4,6,8\}$. In this paper we focus on the case $n=4$, namely generalized quadrangles. 

A finite \emph{generalized quadrangle} is also defined as an incidence structure $\mc{S}=(\mathcal P,\mathcal L,\mathcal I)$ with disjoint point set $\mathcal P$ and line set $\mathcal L$ such that: (1) any two distinct points are incident with at most one common line; (2) for any point $x$ and any line $\ell$ not incident with $x$, there is a unique point $y$ on $\ell$ that is collinear with $x$. An \emph{automorphism} of $\mathcal{S}$ is a permutation of the point set that preserves collinearity. Identifying each line with the set of points incident with it, an automorphism of $\mathcal{S}$ naturally induces a permutation of the line set that preserves concurrency.  All automorphisms of $\mathcal{S}$ form a group, which is called the full automorphism group and denoted by $\Aut(\mc{S})$. An automorphism group $G$ of $\mc{S}$ is a subgroup of $\Aut(\mc{S})$. 

The main examples of generalized quadrangles are the classical generalized quadrangles, which arise from classical simple groups of Lie type of rank $2$. Among these, the symplectic quadrangle $W(3,q)$ is defined from a $4$-dimensional symplectic vector space $V$ over $\mathbb F_q$, where the points are the totally isotropic $1$-dimensional subspaces of $V$ and the lines are the totally isotropic $2$-dimensional subspaces, with incidence given by inclusion. The automorphism group of $W(3,q)$ is almost simple with socle $\PSp_4(q)$, the projective symplectic group.  Other classical generalized quadrangles, such as the parabolic quadric $Q(4,q)$ and the Hermitian quadrangle $H(3,q^2)$, can be defined analogously from orthogonal and unitary spaces, respectively.
 
Symmetry conditions on generalized quadrangles are an important tool for understanding their structure. A major open problem is Kantor's conjecture~\cite{kantor1990automorphism}, which states that a finite flag-transitive generalized quadrangle is classical, $\mathrm{GQ}(3,5)$ or the generalized quadrangle of order (15,17) arising from the Lunelli-Sce hyperoval up to duality. In \cite{buekenhout1994finite}, Buekenhout and Van Maldeghem proved that a finite thick generalized polygon with a point-distance-transitive automorphism group is classical, dual classical, or the unique generalized quadrangle of order $(3,5)$. More recently, Bamberg, Li and Swartz classified antiflag-transitive generalized quadrangles~\cite{bamberg2018classification} and locally $2$-transitive generalized quadrangles~\cite{bamberg2021classification}, giving further support for Kantor's conjecture. In this paper, we focus on the problem of classifying finite thick generalized quadrangles that admit an automorphism group acting primitively on both points and lines. We note that the automorphism groups of all classical generalized quadrangles act primitively on both points and lines. A key reduction due to Bamberg et al.~\cite{bamberg2012generalised} shows that such a group $G$ must be almost simple, i.e., its socle $\mathrm{Soc}(G)$ is a non-abelian simple group. This reduction allows the classification problem to be approached by analyzing the possible simple groups that can occur as the socle. Subsequent work has ruled out many possible socles: Bamberg and Evans \cite{bamberg-evans2021no} eliminated all sporadic groups; Feng and Lu \cite{feng2023finitePSL} dealt with the case of $\PSL_2(q)$ (with the only case $q=9$, yielding $W(3,2)$); Lu, Zhang and Zou \cite{lu2024nonexistencePSU} excluded $\PSU_3(q)$ for $q\ge 3$; and Arumugam, Bamberg and Giudici \cite{ArumugamBambergGiudici2025LowRankGQs} excluded Suzuki and small Ree groups.

In this paper, we address the  case where the socle is the projective symplectic group $\mathrm{PSp}_4(q)$ with $q \ge 3$. This case is of particular interest because $\mathrm{PSp}_4(q)$ is naturally associated with the symplectic quadrangle $W(3,q)$. A key challenge in this case, which also distinguishes it from earlier work such as \cite{feng2023finitePSL, lu2024nonexistencePSU}, is that an automorphism group may fail to act transitively on the substructures fixed by an element---a situation that previous methods were not designed to handle. To overcome this, we develop new geometric tools (Lemmas \ref{lem:P-two-orbit-quadra} and \ref{lem:two-orbits-P-and-L-subqua}) that allow us to analyze the fixed substructure of an automorphism even in the presence of two orbits, thereby completing the classification. Our main result is as follows:

\begin{theorem}\label{main}
Let $\mathcal{S}$ be a finite thick generalized quadrangle and let $G \le \operatorname{Aut}(\mathcal{S})$ act primitively on both points and lines. If $G$ is almost simple with socle $\operatorname{PSp}_4(q)$, where $q \ge 3$, then, up to duality, either $\mc{S}\cong W(3,q)$ for $q\geq 3$, or $q=3$ and $\mc{S}\cong H(3,4)$.
\end{theorem}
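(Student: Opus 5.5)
We would follow the standard strategy for point- and line-primitive generalized quadrangles. Write $T=\PSp_4(q)$, $q=p^f\ge 3$, and fix a point $\alpha$ and a line $\ell$. Primitivity makes $G_\alpha$ and $G_\ell$ maximal subgroups of $G$ not containing $T$. The first constraint is arithmetic. The quadrangle has order $(s,t)$ with $|\PA|=(s+1)(st+1)$, $|\LA|=(t+1)(st+1)$, and $s\le t^2$, $t\le s^2$ by Higman's inequality. Also $|\PA|=|G:G_\alpha|$ and $|\LA|=|G:G_\ell|$, and both divide $|G|$. Moreover $|\PA|\le |G_\alpha|^{3}$-type bounds hold, since $|\PA|\approx s^2t$ and $st$ is controlled by a point stabilizer acting on the $t+1$ lines through $\alpha$.

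\textbf{Step 1: restrict the stabilizers.} We run through the maximal subgroups of almost simple groups with socle $\PSp_4(q)$, using the Bray--Holt--Roney-Dougal tables, including the novelty subgroups for $q$ even with graph automorphism. Large-index (small-order) maximal subgroups are eliminated by the inequality $|G:G_\alpha|\le |G_\alpha|^c$ that comes from the quadrangle parameters. Here we use $s+1\le |G_\alpha:G_{\alpha\ell}|$-type estimates and the standard fact that $|\PA|<|G_\alpha|^{3}$ or similar. This leaves the parabolic subgroups $P_1,P_2$, the $\mathcal{C}_2$/$\mathcal{C}_3$ subgroups such as $\Sp_2(q)\wr S_2$, $\Sp_2(q^2).2$, $\GL_2(q).2$ and $\mathrm{GU}_2(q).2$, the subfield groups, and a few small exceptional cases for $q\le 7$. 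For each surviving pair $(G_\alpha,G_\ell)$ we then solve the Diophantine system $|G:G_\alpha|=(s+1)(st+1)$, $|G:G_\ell|=(t+1)(st+1)$ with Higman's inequality and the divisibility $s+t\mid st(s+1)(t+1)$. Generically the only solutions are $(s,t)=(q,q)$ with both stabilizers parabolic, giving $|\PA|=|\LA|=(q^4-1)/(q-1)$. For $q=3$ an extra solution $(s,t)=(3,9)$ or $(9,3)$ survives, coming from a $\GL_2(3)$- or $\mathrm{GU}$-type stabilizer with $|\PA|=45$ or $112$, realized by $\PSp_4(3)\cong\PSU_4(2)$.

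\textbf{Step 2: identify the geometry.} In the parabolic case, points and lines correspond, up to duality and a graph automorphism when $q$ is even, to the totally isotropic $1$- and $2$-spaces. Incidence is a $G$-invariant relation between these two orbits with each line meeting $q+1$ points. We would use the fixed-substructure method: take an element $g$ (say a long-root or transvection-type element, or an element of order dividing $q+1$) and determine its fixed points and lines. By the standard lemma, these form a subquadrangle, a grid or dual grid, or a pointset/lineset of specific shape. The difficulty is that $C_G(g)$ may have two orbits on fixed points or on fixed lines. This is exactly where Lemmas~\ref{lem:P-two-orbit-quadra} and~\ref{lem:two-orbits-P-and-L-subqua} enter: they pin down the fixed substructure even with two orbits. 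This forces the incidence relation to coincide with inclusion, so $\mc{S}\cong W(3,q)$, or its dual $Q(4,q)$. For $q=3$ with parameters $(3,9)$ we would identify $\mc{S}$ with $H(3,4)$ up to duality, using the known uniqueness of the GQ of order $(3,9)$ (or $(9,3)$), or a direct orbital computation in $\PSU_4(2)$.

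\textbf{Main obstacle.} We expect the hardest part to be Step 2, together with the residual arithmetic cases of Step 1 where the orders do not immediately contradict. In those cases we would apply the same fixed-element argument: choose $g$ of prime order $r$ dividing $|G_\alpha|$ but with few fixed points. We then compare $|\Fix_{\PA}(g)|$, computed from the permutation character $|C_G(g)|\,|g^G\cap G_\alpha|/|G_\alpha|$, against the admissible sizes for fixed substructures ($1+s$, $(s+1)(s't'+1)$, etc.), and handle two-orbit situations with the new lemmas.
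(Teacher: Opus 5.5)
\textbf{The $q=3$ exception is misidentified.} Your final identification would produce the wrong quadrangle. $H(3,4)$ has order $(4,2)$ (its dual $Q(5,2)$ has order $(2,4)$), so it has $45$ points and $27$ lines. The GQ of order $(3,9)$ is $Q(5,3)$ (dual $H(3,9)$), with $112$ points and $280$ lines. Since $|\PSp_4(3)|=25920=2^6\cdot 3^4\cdot 5$ is not divisible by $7$, neither $(3,9)$ nor $(9,3)$ can occur. Moreover, $\GL_2(q).2$ and $\mathrm{GU}_2(q).2$ are maximal only for $q\ge 5$.

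In the actual example the point stabilizer is the image of $\Sp_2(3)^2{:}2$ (index $45$) and the line stabilizer is the image of $2^{1+4}_-.\mathrm{A}_5$ (index $27$). Then $st+1$ divides $\gcd(45,27)=9$, thickness forces $st+1=9$ and $(s,t)=(4,2)$, and one concludes by uniqueness of the GQ of order $(4,2)$.

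\textbf{Your Step~1 filter is unsafe and unjustified.} The line stabilizer here is one of the smallest maximal subgroups, so a blanket bound $|G:G_\alpha|\le |G_\alpha|^c$ that discards small stabilizers would destroy the example. In any case, no such bound follows from point- and line-primitivity alone. $G_\alpha$ need not be transitive on the $t+1$ lines through $\alpha$, so nothing controls $t$ in terms of $|G_\alpha|$. The usable tool is $2^{-1/5}|\PA|^{4/5}<|\LA|<2^{1/4}|\PA|^{5/4}$, which compares the two indices. It is effective only when $T_\alpha$ and $T_\ell$ have different types.

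\textbf{The same-type cases need concrete mechanisms.} When $|\PA|=|\LA|$, arithmetic on $(s+1)(s^2+1)$ does not always finish, and \emph{choose $g$ with few fixed points} is not yet an argument. The paper uses three ideas.
\begin{enumerate}[label=(\roman*)]
\item Take $g$ with abelian centralizer (e.g.\ a regular unipotent element with $|C_T(g)|=q^2$ for $p\ge 5$, or an element of order $4$ when $q$ is even) whose $T$-class meets $T_\alpha$ and $T_\ell$ in single classes. Then $C_T(g)$ is transitive on the points and on the lines of $\mc S_g$. This forces $\mc S_g$ to be a subquadrangle with a nonabelian transitive group, a contradiction; this handles the subfield types.
\item If $T_\alpha=T_\ell$ misses a $T$-class of involutions or of elements of order $3$, such an element fixes no point and no line. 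Benson's congruence then gives $0\equiv s^2+1\pmod{2s}$, which is impossible; this handles types (m), (o), (r).
\item In types (l) and (p), count $|\PA_g|$ for an involution and reduce $|\PA_g|=(s'+1)(s'^2+1)$ to integral points on an elliptic curve. The two-orbit lemma is needed in (l) because $C_T(g)$ is not transitive there.
\end{enumerate}

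\textbf{Step~2 needs no fixed-substructure argument, but it omits a case.} If $T_\alpha$ and $T_\ell$ lie in the same parabolic class (e.g.\ both stabilize isotropic $1$-spaces, $q$ odd), the orbits of $T_\alpha$ on $\LA$ have sizes $1$, $q(q+1)$, $q^3$. No union of these has size $q+1$, yet the lines on $\alpha$ form a $T_\alpha$-invariant set of size $q+1$, a contradiction. If they are $P_1$ and $P_2$, the orbits of $T_\alpha$ on totally isotropic $2$-spaces have sizes $q+1$ and $q^2(q+1)$. Hence the lines on $\alpha$ are exactly the $2$-spaces containing it, which gives $W(3,q)$ directly.
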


This paper is organized as follows. In Section \ref{sec_prelim}, we present some preliminary results on generalized quadrangles and permutation groups. In Section \ref{Subsection_maximal_subgroups}, we collect information about the maximal subgroups and conjugacy classes of $\PSp_4(q)$. Finally, in Section \ref{sec_pfmain}, we give the proof of the main theorem.

\section{Background on generalized quadrangles and permutation groups}\label{sec_prelim}

In this section we recall some useful facts about both finite generalized quadrangles and permutation groups that will be needed later. Throughout this paper, we use $\mc{S}=(\mathcal{P}, \mathcal{L}, \mathcal I)$ to denote a finite generalized quadrangle of order $(s,t)$  with point set $\mc{P}$, line set $\mc{L}$, and incidence relation $\mathcal I$.  If we interchange the roles of points and lines in $\mc{S}$, we obtain another generalized quadrangle called the {\it dual} of $\mc{S}$, which has order $(t,s)$. When $s=t$, we simply say that $\mathcal{S}$ has order $s$.

\subsection{Parameters of generalized quadrangles}\label{GQ}

We briefly recall the standard parameter notation for finite generalized quadrangles and record several elementary facts that will be used later.

\begin{lemma}[{\cite[1.2.1, 1.2.2, 1.2.3]{PayneThas2009}}]\label{parameters}
Let $\mathcal{S}$ be a generalized quadrangle of order $(s,t)$ with $|\PA|$ points and $|\LA|$ lines.
The following hold:
\begin{enumerate}[label={(\arabic*)}]
\item $|\PA|=(s+1)(st+1)$ and $|\LA|=(t+1)(st+1)$; \label{vb}
\item $s+t$ divides $st(s+1)(t+1)$; \label{divcond}
\item if $s>1$ and $t>1$, then $s\leq t^2$ and $t\leq s^2$. \label{st2}
\end{enumerate}
\end{lemma}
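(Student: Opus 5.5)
This lemma collects three classical facts, and I will prove them in order by counting and by an eigenvalue/variance argument.

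For (1), fix a point $x$. There are $t+1$ lines through $x$, and each carries $s$ further points. These $(t+1)s$ points are distinct, because two lines meet in at most one point. Every other point $y$ not collinear with $x$ is counted as follows: for each of the $t+1$ lines $\ell$ through $x$, axiom (2) gives a unique point $z\in\ell$ collinear with $y$, and $z\neq x$. Conversely, each point $z\neq x$ collinear with $x$ lies on $t$ lines other than $xz$. Each such line contains $s$ points other than $z$, and none of them is collinear with $x$, since otherwise there would be a triangle, contradicting (2). Double counting the pairs $(y,z)$ with $y\not\sim x$, $z\sim x$, $z\sim y$ therefore gives
\[(t+1)\cdot|\{y\not\sim x\}| = (t+1)s\cdot ts,\]
so there are $s^2t$ non-collinear points. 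Hence $|\PA|=1+s(t+1)+s^2t=(s+1)(st+1)$. The count of lines follows by duality, or from $|\PA|(t+1)=|\LA|(s+1)$.

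For (2) and (3), I would pass to the collinearity graph, which is strongly regular with parameters $v=(s+1)(st+1)$, $k=s(t+1)$, $\lambda=s-1$, $\mu=t+1$. Its adjacency matrix $A$ satisfies $A^2=kI+\lambda A+\mu(J-I-A)$. So the eigenvalues other than $k$ are the roots of $x^2-(s-t-2)x-(s+t)=0$, namely $s-1$ and $-t-1$. Their multiplicities $f,g$ satisfy $f+g=v-1$ and $k+f(s-1)-g(t+1)=0$, the latter because $\operatorname{tr}A=0$. Solving gives
\[f=\frac{st(s+1)(t+1)}{s+t}.\]
Since $f$ must be an integer, this yields (2).

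For (3) I expect the main obstacle, since it needs Higman's inequality rather than bare integrality. The standard route is a counting/variance argument. Fix two non-collinear points $x,y$ and let $Z$ be the set of points collinear with neither $x$ nor $y$. For each $z\in Z$, let $n_z$ be the number of points collinear with all of $x,y,z$; each such point lies in $\{x,y\}^\perp$, which has size $t+1$. I then count $\sum 1$, $\sum n_z$ and $\sum n_z(n_z-1)$ over $Z$ using only the axioms. This gives $|Z|=d$ with $d=s^2t-st-s+t$, $\sum n_z=(t+1)(t-1)s$, and $\sum n_z(n_z-1)=(t+1)t(t-1)$, the last because two points of $\{x,y\}^\perp$ are collinear with exactly the $t-1$ points of $\{x,y\}^{\perp\perp}$ other than $x,y$. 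The Cauchy--Schwarz inequality $d\sum n_z^2\ge(\sum n_z)^2$ then simplifies to $(s-1)(t^2-s)\cdot(\text{positive factor})\ge 0$. Hence $s\le t^2$ when $s>1$, and dually $t\le s^2$. The algebraic simplification in this last step is the only delicate computation, and I would verify it carefully.
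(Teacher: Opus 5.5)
Your plan is the standard route behind the results the paper cites; the paper gives no proof of its own and only refers to Payne--Thas 1.2.1--1.2.3. You use direct counting for (1), integrality of the multiplicity of the eigenvalue $s-1$ of the collinearity graph for (2), and the variance (Cauchy--Schwarz) argument for Higman's inequality in (3). The strategy works, but three details are wrong as written and need fixing.

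First, with $k=s(t+1)$, $\lambda=s-1$, $\mu=t+1$, the nontrivial eigenvalues are the roots of $x^2-(s-t-2)x-(s-1)(t+1)=0$, not of $x^2-(s-t-2)x-(s+t)=0$. Your roots $s-1$, $-t-1$ and your formula for $f$ are still correct.

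Second, your justification of $\sum n_z(n_z-1)=(t+1)t(t-1)$ is false. Take distinct $u,u'\in\{x,y\}^\perp$; they are noncollinear. The points of $Z$ collinear with both are exactly those of $\{u,u'\}^\perp\setminus\{x,y\}$. There are $t-1$ of them, and each lies in $Z$ by the no-triangle property. They are not the points of $\{x,y\}^{\perp\perp}$: that set can be just $\{x,y\}$, because the pair $\{x,y\}$ need not be regular.

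Third, the Cauchy--Schwarz step does not simplify as you predict. After dividing by $t^2-1>0$ it gives
\[
d(s+t)-s^2(t^2-1)=t(s-1)(s^2-t)\ge 0 .
\]
So the direct conclusion is $t\le s^2$, which needs $s>1$. The other inequality, $s\le t^2$, is the one you get by duality. Your claimed factor $t^2-s$ is wrong. Taken literally, ``$s\le t^2$ when $s>1$'' also fails, for instance for a grid of order $(s,1)$ with $s>1$.

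Since you invoke duality for the second inequality anyway, both parts of (3) still follow once the algebra is corrected. None of these slips breaks the overall argument.
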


\begin{lemma}[{\cite[Lemma 2.2 and Lemma 4.3]{lu2024nonexistencePSU}}]\label{2|P|^5>|L|^4_2222}
    Let $\mathcal{S}$ be a thick generalized quadrangle of order $(s,t)$ with $|\PA|$ points and $|\LA|$ lines. The following hold:
    \begin{enumerate}[label={(\arabic*)}]
        \item $|\PA|>(s+1)^2(t+1)/2$ and $|\LA|>(t+1)^2(s+1)/2$;
        \item $2^{-1/5}|\PA|^{4/5}<|\LA|<2^{1/4}|\PA|^{5/4}$.
    \end{enumerate}
\end{lemma}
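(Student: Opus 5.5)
The lemma has two parts. Part (1) follows from Lemma~\ref{parameters}\ref{st2} by elementary algebra, and part (2) then follows from part (1) together with the counting formulas in Lemma~\ref{parameters}\ref{vb}. Throughout, $s,t\ge 2$.

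For (1), by Lemma~\ref{parameters}\ref{vb} we need to show $(s+1)(st+1)>(s+1)^2(t+1)/2$. This is equivalent to $2(st+1)>(s+1)(t+1)$, which simplifies to $st+1>s+t$, that is, $(s-1)(t-1)>0$. This holds by thickness. The inequality for lines is the dual statement, obtained by swapping $s$ and $t$.

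For (2), we prove the upper bound $|\LA|^4<2|\PA|^5$; the lower bound is its dual, since interchanging points and lines turns it into $|\PA|^4<2|\LA|^5$, i.e.\ $|\LA|>2^{-1/5}|\PA|^{4/5}$. Write $|\LA|^4=(t+1)^4(st+1)^4$ and $|\PA|^5=(s+1)^5(st+1)^5$. The inequality becomes
\[
(t+1)^4<2(s+1)^5(st+1).
\]
By Lemma~\ref{parameters}\ref{st2} we have $t\le s^2$, so $t+1\le s^2+1$. Since $(t+1)^4$ is increasing in $t$, while the right-hand side $2(s+1)^5(st+1)$ is also increasing in $t$, the substitution $t=s^2$ does not immediately settle the matter, so we bound more carefully:
\[
(t+1)^4=(t+1)^3(t+1)\le (s^2+1)^3(t+1).
\]
It therefore suffices to prove $(s^2+1)^3(t+1)<2(s+1)^5(st+1)$. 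Since $st+1\ge \tfrac{s}{2}\,(t+1)$ when $s\ge 2$ (because $2st+2\ge st+s$ holds as $st\ge s$), it is enough to show
\[
(s^2+1)^3< s(s+1)^5,
\]
which is clear: the left side is $s^6+3s^4+3s^2+1$ and the right side is $s^6+5s^5+10s^4+\cdots$.

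The lower bound then follows by applying this upper bound to the dual quadrangle of order $(t,s)$. The only mildly delicate step is reducing the two-variable inequality using $t\le s^2$, handling the monotonicity in $t$ as above.
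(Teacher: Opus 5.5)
Your proof is correct. Part (1) reduces exactly to $(s-1)(t-1)>0$, and part (2) reduces to $(t+1)^4<2(s+1)^5(st+1)$, which your chain $(t+1)^4\le (s^2+1)^3(t+1)<s(s+1)^5(t+1)\le 2(s+1)^5(st+1)$ settles via $t\le s^2$, with the lower bound following by duality. The paper gives no proof of its own here and simply imports the statement from \cite{lu2024nonexistencePSU}; your argument is a complete, self-contained derivation from Lemma~\ref{parameters}(1),(3), which is the natural way to prove it.
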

For a positive integer $n$ and a prime $p$, let $n_p$ denote the $p$-part of $n$, i.e., the highest power of $p$ dividing $n$, and let $n_{p'} := n / n_p$ denote the $p'$-part of $n$.

\subsection{Permutation group theory}
This subsection recalls some standard notation and basic results from permutation group theory that will be used repeatedly. We assume the reader is familiar with fundamental notions such as transitive and primitive actions.

Let $G$ be a group acting on a set $\Omega$. For $\a\in\Omega$ and $g\in G$, we write $\a^g$ for the image of $\a$ under the permutation induced by $g$. We denote by $G_\a$ the stabilizer of $\a$ in $G$, and by $\a^G$ the $G$-orbit containing $\a$. The centralizer of $g$ in $G$ is written as $C_G(g)$, and the conjugacy class of $g$ in $G$ as $g^G$. Further terminology and notation can be found in \cite{DMpermutation}.
Given $g\in G$, define the fixed-point set $\Fix(g)=\{x\in\Omega: x^g=x\}$. The following lemma provides a convenient formula for computing $|\Fix(g)|$ in terms of conjugacy classes and point stabilizers, and it will be applied frequently throughout the paper.
\begin{lemma}[{\cite[Lemma 2.5]{liebeck1991minimal}}]\label{Fix_points_number}
Let $G$ be a finite transitive group acting on a set $\Omega$. Let $\a\in\Omega$ and $g\in G$. Then
\[|\Fix(g)|=\frac{|\Omega|\cdot|g^G\cap G_{\a}|}{|g^G|}.\]
\end{lemma}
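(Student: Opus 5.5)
The formula follows from double counting the set of pairs
\[
X=\{(\beta,h)\in\Omega\times g^G : \beta^h=\beta\}.
\]
The idea is to count $X$ once by fixing $h\in g^G$ and once by fixing $\beta\in\Omega$, and then use transitivity to make the second count uniform in $\beta$.

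First I fix $h\in g^G$. Write $h=g^x$ for some $x\in G$. Then $\Fix(h)=\Fix(g)^x$, so every element of the class fixes the same number of points. Hence
\[
|X|=|g^G|\cdot|\Fix(g)|.
\]

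Next I fix $\beta\in\Omega$. The number of $h\in g^G$ with $\beta^h=\beta$ is $|g^G\cap G_\beta|$. Since $G$ is transitive, $\beta=\alpha^y$ for some $y\in G$, and then $G_\beta=G_\alpha^{\,y}$. Conjugation by $y$ preserves the class $g^G$, so it gives a bijection $g^G\cap G_\alpha\to g^G\cap G_\beta$. Therefore
\[
|X|=|\Omega|\cdot|g^G\cap G_\alpha|.
\]

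Equating the two counts and dividing by $|g^G|$ gives the stated formula. There is no real obstacle here; the only point needing care is that both counts are independent of the chosen element, namely $h$ in the first count (by conjugacy) and $\beta$ in the second (by transitivity).
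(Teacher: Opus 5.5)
Your double-counting argument is correct and complete, and both uniformity steps are properly justified: conjugation by $x$ carries $\Fix(g)$ onto $\Fix(g^x)$, and conjugation by $y$ maps $g^G\cap G_\alpha$ bijectively onto $g^G\cap G_\beta$. The paper gives no proof of its own and simply cites Liebeck--Saxl; your argument is the standard proof behind that citation.
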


The following lemma provides a criterion for  the centralizer to act transitively on the fixed-point set.

\begin{lemma}[{\cite[Lemma 2.4]{lu2024nonexistencePSU}}]\label{CT(g) acts transitively}
Let $G$ be a group acting transitively on a set $\Omega$ and let $g$ be a nontrivial element of $G$ fixing some $\a\in\Omega$. Then $C_G(g)$ acts transitively on $\Fix(g)$ if and only if $g^G\cap G_{\a}=g^{G_{\a}}$. In this case, $|\Fix(g)|=\dfrac{|C_G(g)|}{|C_{G_\a}(g)|}$.
\end{lemma}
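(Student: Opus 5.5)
We prove the lemma in the standard orbit-counting way, as in the cited source. Write $H=G_\alpha$ and let $g\in H$ be nontrivial. First we identify $\Fix(g)$ with a coset space: the map $Gx\mapsto$ (coset) sends $\alpha^x$ to $Hx$, and $\alpha^x\in\Fix(g)$ if and only if $xgx^{-1}\in H$, that is, $g^{x^{-1}}\in H$. Hence $\Fix(g)$ is in bijection with $\{Hx : xgx^{-1}\in H\}$. Under this bijection, $C_G(g)$ acting on $\Fix(g)$ corresponds to right multiplication on these cosets.

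Next we define a map $\phi:\Fix(g)\to g^G\cap H$ by $\alpha^x\mapsto xgx^{-1}$. This map is well defined up to $H$-conjugacy: if we replace $x$ by $hx$ with $h\in H$, the image becomes $h(xgx^{-1})h^{-1}$. So $\phi$ induces a map from $\Fix(g)$ to the set of $H$-classes contained in $g^G\cap H$. This induced map is surjective, since every $k=xgx^{-1}\in g^G\cap H$ arises from the point $\alpha^x$.

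Its fibres are exactly the $C_G(g)$-orbits. Suppose $xgx^{-1}$ and $ygy^{-1}$ are $H$-conjugate, say $ygy^{-1}=h\,xgx^{-1}h^{-1}$. Then $c=x^{-1}h^{-1}y\in C_G(g)$, and $\alpha^{y}=\alpha^{hxc}=(\alpha^x)^c$, using $\alpha^h=\alpha$. Conversely, points in the same $C_G(g)$-orbit give $H$-conjugate images. Therefore the $C_G(g)$-orbits on $\Fix(g)$ are in bijection with the $H$-classes in $g^G\cap H$. In particular, $C_G(g)$ is transitive on $\Fix(g)$ if and only if $g^G\cap H$ is a single $H$-class. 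Since $g\in H$, that class must be $g^{H}$, which gives the stated criterion.

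For the formula, assume transitivity. The stabilizer in $C_G(g)$ of $\alpha\in\Fix(g)$ is $C_G(g)\cap H=C_H(g)$, so the orbit–stabilizer theorem gives $|\Fix(g)|=|C_G(g)|/|C_H(g)|$. As a consistency check, Lemma~\ref{Fix_points_number} gives
\[
\frac{|G:H|\,|g^H|}{|g^G|}=\frac{|G|/|H|\cdot |H|/|C_H(g)|}{|G|/|C_G(g)|}=\frac{|C_G(g)|}{|C_H(g)|}.
\]

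The argument has no real obstacle. The only point needing care is the left/right action convention, so that conjugation lands in $H$ in the correct direction.
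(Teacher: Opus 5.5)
Your proof is correct. The paper gives no proof of this lemma, since it is quoted from Lu--Zhang--Zou, so there is no in-paper argument to match.

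The natural comparison is with the short counting argument that the paper's own tools suggest. For finite $G$, the $C_G(g)$-orbit of $\alpha$ has length $|C_G(g)|/|C_{G_\alpha}(g)|=|\Omega|\,|g^{G_\alpha}|/|g^G|$. Lemma~\ref{Fix_points_number} gives $|\Fix(g)|=|\Omega|\,|g^G\cap G_\alpha|/|g^G|$. Since $g^{G_\alpha}\subseteq g^G\cap G_\alpha$ always, transitivity holds exactly when these two sets coincide, and the formula drops out at the same time.

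Your route via $\alpha^x\mapsto xgx^{-1}$ is a little longer but proves more. It gives a bijection between the $C_G(g)$-orbits on $\Fix(g)$ and the $G_\alpha$-classes contained in $g^G\cap G_\alpha$. The orbit attached to the class of $k$ has length $|C_G(g)|\,|k^{G_\alpha}|/|G_\alpha|$, and the equivalence needs no finiteness. The paper later re-derives exactly this refinement by hand in two places:
\begin{itemize}
\item in the type (l) lemma, where $g^T\cap T_\alpha$ splits into $T_\alpha$-classes of sizes $5$ and $20$, giving two orbits via the equation $4x+y=5$;
\item in Case~2 of Lemma~\ref{lem:T_a-f-i-and-T_l-k}.
\end{itemize}

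Some minor points:
\begin{itemize}
\item The nontriviality of $g$ is never used; the statement also holds for $g=1$.
\item The formula $|\Fix(g)|=|C_G(g)|/|C_{G_\alpha}(g)|$ tacitly assumes $G$ is finite. Otherwise, read it as the index $|C_G(g):C_{G_\alpha}(g)|$.
\item The coset identification in your first paragraph is not used afterwards.
\item The phrase ``the map $Gx\mapsto$ (coset)'' is garbled and should be rewritten.
\end{itemize}
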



\subsection{Substructures of a generalized quadrangle fixed by an automorphism}

Two points $x_1, x_2 \in \mathcal{P}$  are said to be collinear, denoted $x_1 \sim x_2$, if there exists a line $\ell \in \mathcal{L}$ incident with both. Dually, two lines $\ell_1, \ell_2 \in \mathcal{L}$ are concurrent, denoted $\ell_1 \sim \ell_2$, if there exists a point $x \in \mathcal{P}$ incident with both. We write $x \in \ell$ to indicate that point $x$ is incident with line $\ell$.

A generalized quadrangle $\mc{S'}=(\mc{P}',\mc{L}',\mathcal{I}')$ is called a \emph{subquadrangle} of $\mc{S}=(\mc{P},\mc{L},\mathcal{I})$ if $\mc{P}'\subseteq \mc{P}$, $\mc{L}'\subseteq\mc{L}$ and  $\mathcal{I}'$ is the restriction of $\mathcal{I}$ on $(\mc{P}'\times \mc{L}')\cup (\mc{L}'\times \mc{P}')$.
A \emph{grid} with parameters $(s_{1},s_{2})$ is a point-line incidence structure $(\mathcal{X}, \mathcal{B}, {\rm I})$ with
$$\mathcal{X}=\left\{x_{ij}: 0 \leqslant i \leqslant s_{1}, 0 \leqslant j \leqslant s_{2}\right\}, \qquad  \mathcal{B}=\left\{\ell_{0}, \ldots, \ell_{s_{1}}, \ell_{0}^{\prime}, \ldots, \ell_{s_{2}}^{\prime}\right\}$$
such that $x_{i j}{\rm I} \ell_{k}$ if and only if $i=k$, and $x_{ij} {\rm I} \ell_{k}^{\prime}$ if and only if $j=k$. A \emph{dual grid} with parameters $(s_1,s_2)$ is the point-line dual of a grid with parameters $(s_2,s_1)$. Note that in a grid, each point lies on exactly two lines.


\begin{lemma}[{\cite[2.4.1]{PayneThas2009}}]\label{subquadrangle_structure}
Let $g$ be an automorphism of a finite generalized quadrangle $\mathcal{S}=(\mathcal{P}, \mathcal{L})$ of order $(s,t)$. Let $\mathcal{P}_{g}$ and $\mathcal{L}_g$ be the set of fixed points and fixed lines of $g$ respectively, and let $\mathcal{S}_{g}=(\cP_g,\cL_g)$ be the induced incidence substructure on $\mathcal{P}_{g}\times\mathcal{L}_g$. Then  one of the following holds:
\begin{enumerate}
\item[(0)] $\mathcal{P}_{g}=\mathcal{L}_{g}=\varnothing$,
\item[(1)] $\mathcal{L}_{g}=\varnothing$, $\mathcal{P}_{g}$ is a nonempty set of pairwise noncollinear points,
\item[(1')] $\mathcal{P}_{g}=\varnothing$, $\mathcal{L}_{g}$ is a nonempty set of pairwise nonconcurrent lines,
\item[(2)] $\mathcal{P}_{g}$ contains a point $x$ such that $x \sim x'$ for every point $x' \in \mathcal{P}_{g}$, $\mathcal{L}_{g}$ is nonempty and each line of $\mathcal{L}_{g}$ is incident with $x$,
\item[(2')] $\mathcal{L}_{g}$ contains a line $\ell$ such that $\ell \sim \ell'$ for every line $\ell' \in \mathcal{L}_{g}$, $\mathcal{P}_{g}$ is nonempty and each point of $\mathcal{P}_{g}$ is incident with $\ell$,
\item[(3)] $\mathcal{S}_{g}$ is a grid with parameters $(s_1,s_2)$, $s_{1}<s_{2}$,
\item[(3')] $\mathcal{S}_{g}$ is a dual grid with parameters $(s_1,s_2)$, $s_{1}<s_{2}$,
\item[(4)] $\mathcal{S}_{g}$ is a subquadrangle of order $\left(s^{\prime}, t^{\prime}\right)$.
\end{enumerate}
\end{lemma}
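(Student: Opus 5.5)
The statement to prove is Lemma~\ref{subquadrangle_structure}, the classification of the fixed substructure $\mathcal{S}_g$. The plan is purely combinatorial and uses only the two GQ axioms together with the fact that $g$ preserves incidence. The basic closure property is this: if $x\in\cP_g$ and $\ell\in\cL_g$ with $x\notin\ell$, then the unique point $y\in\ell$ collinear with $x$, and the unique line through $x$ meeting $\ell$, are both fixed by $g$, by uniqueness. Likewise, two fixed collinear points span a fixed line, and two fixed concurrent lines meet in a fixed point. So $\mathcal{S}_g$ is closed under ``projection'' and under joining.

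\textbf{Degenerate cases.} If $\cL_g=\varnothing$, closure under joining shows that the fixed points are pairwise noncollinear; this gives (0) or (1). Dually, if $\cP_g=\varnothing$ we get (1'). Now assume both sets are nonempty and pick $x\in\cP_g$, $\ell\in\cL_g$. By projection, some fixed point lies on some fixed line, so $\mathcal{S}_g$ contains a flag.

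\textbf{Main case analysis.} Consider whether $\mathcal{S}_g$ contains an ordinary quadrangle (four points and four lines forming a $4$-cycle in the incidence graph).

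\emph{No ordinary quadrangle.} I will show there is a point incident with all fixed lines, or a line containing all fixed points. Suppose two fixed lines $\ell,m$ are disjoint. Projecting points of one onto the other, every fixed point on $\ell$ gives a fixed line meeting $m$. To avoid creating a quadrangle, at most one fixed point can lie on each such line; chasing this forces a contradiction with the existence of two fixed points on both. So in the absence of quadrangles, all fixed lines pairwise meet. Since there are no triangles, pairwise concurrent lines either all pass through one point, or at most one... in which case they all pass through a common point $x$. Then every fixed point $y\ne x$ lies on a fixed line through $x$: projecting $y$ onto any fixed line shows $y\sim x$, since otherwise a quadrangle appears. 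This gives (2), and (2') is the dual.

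\emph{Ordinary quadrangle present.} Let $\mathcal{S}_g$ contain one. Then every fixed line contains at least two fixed points, and every fixed point lies on at least two fixed lines, obtained by projecting the quadrangle's elements. The GQ axiom holds within $\mathcal{S}_g$ by the projection closure. So $\mathcal{S}_g$ is a possibly thin GQ. Set $s'+1$ to be the number of fixed points per line; constancy along lines follows from the standard GQ argument that opposite lines have equally many points, via the projection bijection restricted to fixed elements, and connectivity. Similarly define $t'+1$ for lines through a point. If $s',t'\ge 1$ are both determined, the cases are:
\begin{itemize}
\item $t'=1$: a grid, giving (3), or (4) if $s_1=s_2$ is regarded as a quadrangle;
\item $s'=1$: a dual grid, giving (3');
\item otherwise: a subquadrangle, giving (4).
\end{itemize}
The unequal-parameter grid appears because, in a thin GQ with $t'=1$, the constancy argument only equates opposite lines, so the two parallel classes may have different sizes $s_1+1$ and $s_2+1$.

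\textbf{Main obstacle.} The delicate step is the no-quadrangle analysis, especially showing that two disjoint fixed lines force a quadrangle once fixed points are abundant. It is also where the case of a single fixed line with its fixed points on it must be separated off into (2'). I would handle it by first fixing a flag $(x,\ell)$ and classifying fixed elements by their distance from it in the incidence graph, which is at most $4$.
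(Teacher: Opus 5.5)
The paper does not prove this lemma; it is quoted from \cite[2.4.1]{PayneThas2009}. Your outline therefore has to stand on its own, and as written it has two genuine gaps.

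The first gap is in the no-quadrangle case. Your projection argument correctly shows the following: if $\ell,m$ are disjoint fixed lines and $\ell$ carries two fixed points $a,b$, then their projections $a',b'$ onto $m$ are distinct fixed points, and $a,b,b',a'$ form an ordinary quadrangle in $\mathcal{S}_g$. But this only proves that two disjoint fixed lines carry exactly one fixed point each. It does not give ``all fixed lines pairwise meet'', and that claim is false. In configuration (2'), two fixed lines meeting $\ell$ in different points are disjoint, each carries a single fixed point, and there is no quadrangle. Dually, your argument would make all fixed points pairwise collinear, so (2) and (2') would always hold together. That fails, e.g.\ for a symplectic transvection of $W(3,q)$, whose fixed structure is all lines through one point and all points on them.

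The disjoint case is precisely the (2') branch and needs its own argument. Let $m_1,m_2$ be disjoint fixed lines with unique fixed points $p_1,p_2$.
The projection of $p_1$ onto $m_2$ is fixed, hence equals $p_2$, so $\ell:=p_1p_2\in\mathcal{L}_g$.
Any fixed point $z\ne p_1,p_2$ lies on neither $m_i$ and projects onto them in $p_1,p_2$. So $z\sim p_1$ and $z\sim p_2$, and the absence of triangles forces $z\in\ell$.
Every fixed line contains a fixed point and so meets $\ell$.
If instead no two fixed lines are disjoint, they all pass through one point $x$ and (2) follows. In particular, (2') is not just ``a single fixed line with its fixed points'', as your last paragraph suggests.

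The second gap is in the quadrangle case. Constancy of the number of fixed points on fixed lines does not follow from the opposite-line bijection plus connectivity; it fails for grids with $s_1<s_2$. Your case list is stated only ``if $s',t'$ are both determined'', which is exactly when (3) cannot occur, since a grid with constant line size has $s_1=s_2$.

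What is missing is this implication: if two concurrent fixed lines $L,M$ meeting in $x$ carry different numbers of fixed points, then every fixed point lies on exactly two fixed lines, so $\mathcal{S}_g$ is a grid (and dually for (3')). To prove it:
\emph{(i)} $x$ lies on no third fixed line $N$. Take a fixed point $y\ne x$ on $N$; any other fixed line through $y$ is, by the no-triangle axiom, opposite both $L$ and $M$, which would force equal counts.
\emph{(ii)} Non-collinear fixed points lie on equally many fixed lines, by the dual projection.
\emph{(iii)} A fixed point $y\sim x$ has the same count as $x$. If the line $xy$ carries at least three fixed points, $x$ and $y$ have a common opposite fixed point (project from a suitable fixed line opposite $xy$). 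Otherwise the other line through $x$ carries at least three fixed points, and any second fixed line through $y$ is opposite it. So $y$ is joined, by a line with at least three fixed points, to a fixed point opposite $x$, and the same trick applies.

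Only with this step does your trichotomy (3)/(3')/(4) follow.
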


\begin{lemma}[{\cite[Corollary 2.3, Lemma 2.5]{feng2023finitePSL}}]\label{is a subquadrangle}
    Let $g$ be an automorphism of a thick generalized quadrangle $\mathcal{S}$, and let  $\mc{S}_g=(\PA_g,\LA_g)$ be the fixed substructure of $g$. If $|\PA_g|\geq2$, $|\LA_g| \geq 2$ and $\mc{S}_g$ admits an automorphism group $H$ that is transitive on both its points and its lines, then 
    \begin{enumerate}
        \item[(1)]  $\mc{S}_g$ is a generalized quadrangle of order $(s',t')$ for some positive integers $s',t'$;
        \item[(2)] $H$ is nonabelian. 
    \end{enumerate}
\end{lemma}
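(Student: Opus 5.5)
The statement to prove is Lemma~\ref{is a subquadrangle}: if the fixed substructure $\mc{S}_g$ of an automorphism $g$ has $|\PA_g|\ge 2$, $|\LA_g|\ge 2$, and admits an automorphism group $H$ transitive on both its points and its lines, then $\mc{S}_g$ is a generalized quadrangle and $H$ is nonabelian. The plan is to run through the cases of Lemma~\ref{subquadrangle_structure} and rule out every one except (4), using only the transitivity of $H$ on $\PA_g$ and on $\LA_g$.

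\textbf{Step 1: the cases without a symmetric structure.} Cases (0), (1) and (1$'$) are excluded at once, since $\PA_g$ and $\LA_g$ are both nonempty.
\begin{itemize}
\item In case (2) there is a point $x\in\PA_g$ lying on every line of $\LA_g$. Since $|\LA_g|\ge 2$, two distinct lines meet in at most one point, so $x$ is the unique point of $\PA_g$ incident with at least two lines of $\LA_g$. Because $|\PA_g|\ge 2$, some other fixed point $y\neq x$ exists, and $y$ lies on at most one fixed line. Hence $H$, which preserves incidence counts, must fix $x$. This contradicts transitivity of $H$ on $\PA_g$.
\item Case (2$'$) is excluded by the dual argument, using transitivity on $\LA_g$.
\end{itemize}

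\textbf{Step 2: grids and dual grids.} In case (3), $\mc{S}_g$ is a grid with parameters $(s_1,s_2)$ and $s_1<s_2$. Its lines $\ell_0,\dots,\ell_{s_1}$ each carry $s_2+1$ points, while the lines $\ell'_0,\dots,\ell'_{s_2}$ each carry $s_1+1$ points. Since $s_1\ne s_2$, these two families are distinguished by the number of fixed points on each line, so $H$ preserves each family. This contradicts line-transitivity of $H$. Case (3$'$) is dual: there the two point classes differ in the number of fixed lines through each point, contradicting point-transitivity.

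Only case (4) remains, so $\mc{S}_g$ is a subquadrangle of some order $(s',t')$. If it is degenerate in some sense (for instance $s'=1$ or $t'=1$), it is still a generalized quadrangle of that order, which is what (1) asserts. This proves (1).

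\textbf{Step 3: $H$ is nonabelian.} Suppose instead that $H$ is abelian. An abelian transitive group acts regularly, so $|H|=|\PA_g|=|\LA_g|$. Then Lemma~\ref{parameters}\ref{vb} gives $(s'+1)(s't'+1)=(t'+1)(s't'+1)$, hence $s'=t'$.

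Now take a flag $(x,\ell)$ of $\mc{S}_g$. The element mapping $x$ to another point $x'$ on $\ell$ is unique in $H$, and commutativity is the tool to exploit. The standard trick is that in an abelian regular group, point stabilizers are trivial and the incidence structure becomes a "difference-set" type structure. Define $D=\{h\in H: x^h \,\mathrm{I}\, \ell\}$; then $x^{h}\,\mathrm{I}\,\ell^{k}$ holds exactly when $hk^{-1}\in D$.

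Two distinct lines meet in at most one point, which forces $D D^{-1}$ to cover each nonidentity element at most once. Collinearity of points $x^{a}$ and $x^{b}$ then corresponds to $ab^{-1}\in DD^{-1}$. The map $h\mapsto h^{-1}$ is an automorphism of the abelian group $H$, and it induces a polarity (a duality of order $2$) sending $x^h$ to $\ell^{h^{-1}}$. A polarity of a generalized quadrangle of order $s'$ forces $2s'$ to be a square, and its absolute points give a known counting contradiction. So the approach is to obtain a contradiction either from this polarity or from a direct count, namely that $|D|=s'+1$ and the number of elements of $DD^{-1}$ equals the number of points collinear with $x$ other than $x$ itself, while one uses that $x^{a}\mapsto x^{a^{-1}}$ preserves collinearity.

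This last step is the main obstacle. I would first try the polarity route, since for abelian $H$ inversion gives $x^a\sim x^b$ if and only if $x^{a^{-1}}\sim x^{b^{-1}}$, and this yields a polarity. A polarity of a generalized quadrangle exists only if $2s'$ is a square, with an absolute-point structure that is incompatible with the regular abelian action; for example, the number of absolute points, the $x^h$ with $h^2\in D$, can be counted in two ways to get a contradiction.
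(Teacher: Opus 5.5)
The paper does not prove this lemma; it quotes it from Feng--Lu. Your Steps 1 and 2 for part (1) are correct. They are the standard case analysis, the same reasoning the paper uses in Lemmas \ref{lem:P-two-orbit-quadra} and \ref{lem:two-orbits-P-and-L-subqua}.

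\textbf{Part (2) is not proved, and cannot be proved as stated.} Step 3 ends at what you call the main obstacle, with only a hoped-for double count of absolute points. More seriously, (2) is false when $(s',t')=(1,1)$, so no argument from the stated hypotheses alone can succeed. An ordinary quadrangle (points $x_0,\dots,x_3$, lines $\{x_i,x_{i+1}\}$, indices mod $4$) admits the rotation group $C_4$, which is abelian and transitive on points and on lines. It does arise as a fixed structure. In $W(3,7)$, with the form pairing $e_1,e_4$ and $e_2,e_3$, the element $\mathrm{diag}(2,3,5,4)\in\Sp_4(7)$ has four distinct eigenvalues. It therefore fixes exactly the points $\langle e_1\rangle,\dots,\langle e_4\rangle$ and the lines $\langle e_1,e_2\rangle,\langle e_1,e_3\rangle,\langle e_2,e_4\rangle,\langle e_3,e_4\rangle$. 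Your polarity route meets exactly this case: the theorem that a polarity forces $2s'$ to be a square fails for $s'=1$ (the ordinary quadrangle has polarities), and you never treat $s'=1$. You therefore need the extra hypothesis $\max(s',t')\ge 2$, for instance $|\mathcal{P}_g|>4$. This holds in every application of (2) in the paper, where $|\mathcal{P}_g|\ge 16$.

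\textbf{A short argument once $\max(s',t')\ge 2$.} This needs no $s'=t'$, difference sets or polarities.
\begin{enumerate}
\item If $H$ is abelian, then for a line $L$ the stabilizer $H_L$ equals every other line stabilizer. It fixes all lines, hence all points (each point is the meet of two lines), so $H_L=1$. Similarly $H$ is regular on points.
\item Suppose $s'\ge 2$. Choose $x\,\mathrm{I}\,L$ and $g,h\in H$ such that $x,x^g,x^h$ are three distinct points of $L$.
\item Then $x^g\in L\cap L^g$, $x^h\in L\cap L^h$, and $x^{gh}=x^{hg}\in L^g\cap L^h$.
\item If $x^{gh}\notin L$, it would be collinear with two distinct points of $L$, which is impossible. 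Hence $x^{gh}\in L$.
\item Then $L$ and $L^g$ share the two distinct points $x^g$ and $x^{gh}$, so $L^g=L$ and $g\in H_L=1$, a contradiction.
\end{enumerate}
If $t'\ge 2$, apply the dual argument.

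Your polarity idea can also be completed when $s'\ge 2$. Since $2s'$ is a square, $s'$ is even and $|H|=(s'+1)(s'^2+1)$ is odd. So squaring is a bijection of $H$, and there are exactly $|D|=s'+1$ absolute points. But the absolute points of a polarity form an ovoid with $s'^2+1$ points. That route, however, depends on considerably heavier results.
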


\begin{lemma}\label{lem:P-two-orbit-quadra}
 Let $g$ be an automorphism of a thick generalized quadrangle $\mathcal{S}$, and let  $\mc{S}_g=(\PA_g,\LA_g)$ be the fixed substructure of $g$. Assume that $|\mathcal{P}_g|\ge 2$ and $|\mathcal{L}_g|\ge 2$. If there exists a subgroup $H\le \operatorname{Aut}(\mathcal{S}_g)$ such that $H$ is transitive on $\mathcal{L}_g$ and has exactly two orbits on $\mathcal{P}_g$, both of equal size at least $2$, then $\mathcal{S}_g$ is a subquadrangle.
\end{lemma}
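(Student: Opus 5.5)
The plan is to go through the list of possible fixed substructures in Lemma~\ref{subquadrangle_structure} and rule out every case except (4), using only the hypotheses that $|\PA_g|,|\LA_g|\ge 2$, that $H$ is transitive on $\LA_g$, and that $H$ has exactly two orbits on $\PA_g$, of equal size $m\ge 2$. The one fact used throughout is that $H\le \Aut(\mc{S}_g)$ preserves incidence within $\mc{S}_g$. Hence $H$ preserves the number of points of $\PA_g$ on each line of $\LA_g$, and the number of lines of $\LA_g$ through each point of $\PA_g$.

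Cases (0), (1) and (1') are excluded at once, since each of them forces $\PA_g$ or $\LA_g$ to be empty.

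In case (2), every line of $\LA_g$ passes through a single point $x\in\PA_g$. Since $|\LA_g|\ge 2$ and two distinct lines meet in at most one point, $x$ is the unique common point of the lines of $\LA_g$. So $x$ is determined by $\mc{S}_g$ and is fixed by $H$. Then $\{x\}$ is an $H$-orbit of size $1$, contradicting the assumption that both orbits have size at least $2$.

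Case (2') is dual. All points of $\PA_g$ lie on a line $\ell\in\LA_g$. Because $|\PA_g|\ge 2$ and two points lie on at most one line, $\ell$ is the unique line of $\LA_g$ containing all of $\PA_g$. Hence $H$ fixes $\ell$, which contradicts transitivity of $H$ on $\LA_g$ with $|\LA_g|\ge 2$.

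In case (3), $\mc{S}_g$ is a grid with parameters $(s_1,s_2)$ and $s_1<s_2$. Its lines fall into two families: the lines $\ell_k$ carry $s_2+1$ points and the lines $\ell'_k$ carry $s_1+1$ points. Since $s_1\ne s_2$ and $H$ preserves the number of points on a line, $H$ cannot map a line of one family to a line of the other. This contradicts line-transitivity.

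In case (3'), $\mc{S}_g$ is a dual grid with parameters $(s_1,s_2)$ and $s_1<s_2$. Its points fall into two nonempty families, of sizes $s_1+1$ and $s_2+1$. The points in the family of size $s_1+1$ each lie on $s_2+1$ lines, and the points in the family of size $s_2+1$ each lie on $s_1+1$ lines. Since $H$ preserves point degrees and these degrees differ, each family is a union of $H$-orbits. There are exactly two orbits and two nonempty families, so the families are precisely the orbits. Their sizes $s_1+1\ne s_2+1$ then contradict the assumption that the two orbits have equal size.

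Only case (4) remains, so $\mc{S}_g$ is a subquadrangle, as claimed. If Lemma~\ref{subquadrangle_structure} is read so that grids with $s_1=s_2$ count under (4), as orders $(s_1,1)$, then they are already covered.

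The only delicate step is (3'). It is the one place where the equal-size hypothesis on the two point orbits is genuinely needed, since a single point orbit is impossible there anyway. The bookkeeping of which family has which size and which degree must be done carefully from the definition of the dual grid.
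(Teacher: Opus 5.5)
Your proof is correct and follows the same route as the paper: you eliminate cases (0)--(3') of Lemma~\ref{subquadrangle_structure} one by one, and the equal-orbit-size hypothesis is needed only in the dual-grid case. Your version is slightly more careful than the paper's, since you justify why $x$ and $\ell$ are $H$-invariant and use point and line degrees to identify the grid families with the $H$-orbits, but the argument is essentially the same.
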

\begin{proof}
By Lemma \ref{subquadrangle_structure}, $\mc{S}_{g}$ must be one of the types  (0)-(4). We show that all types except (4) lead to contradictions under the given hypotheses. Types (0),(1) and (1') are impossible since $|\mathcal{P}_g|,|\mathcal{L}_g|\ge 2$. If $\mathcal{S}_g$ is of type (2), then all lines in $\mc{S}_g$ pass through a common point $x$. This forces $x$ to be fixed by $H$, so $\{x\}$ is an $H$-orbit on $\mathcal{P}_g$ of size $1$, contradicting the assumption that $H$ has two orbits each of size at least $2$. Dually, type (2') yields a fixed line $\ell$, giving a singleton $H$-orbit in $\mathcal{L}_g$, which contradicts the transitivity of $H$ on $\mathcal{L}_g$ together with $|\mathcal{L}_g|\ge 2$. If $\mathcal{S}_g$ is of type (3), it is a grid with two line families of sizes $s_1+1$ and $s_2+1$ ($s_1<s_2$). The different sizes would force the two families to lie in different orbits, which is impossible. Dually, if $\mathcal{S}_g$ is of type (3'), it is a dual grid with point families of sizes $s_1+1$ and $s_2+1$; the hypothesis that $H$ has two equal-sized point orbits then forces $s_1+1=s_2+1$, contradicting $s_1<s_2$.
Thus none of the types (0)–(3') can occur, leaving type (4) as the only possibility. Hence $\mathcal{S}_g$ is a subquadrangle.
\end{proof}

\begin{lemma}\label{lem:two-orbits-P-and-L-subqua}
Let $g$ be an automorphism of a thick generalized quadrangle $\mathcal{S}$, and let  $\mc{S}_g=(\PA_g,\LA_g)$ be the fixed substructure of $g$. Suppose there exists $H\le\operatorname{Aut}(\mathcal{S}_g)$ such that $H$ has exactly two orbits $\mathcal{P}_1,\mathcal{P}_2$ on $\mathcal{P}_g$ and exactly two orbits $\mathcal{L}_1,\mathcal{L}_2$ on $\mathcal{L}_g$, with $|\mathcal{P}_1|,|\mathcal{P}_2|,|\mathcal{L}_1|,|\mathcal{L}_2|\ge 2$. If there exist indices $i,j,k,\ell\in\{1,2\}$ such that $|\mathcal{L}_i| > |\mathcal{P}_j|$ and $|\mathcal{P}_k| > |\mathcal{L}_\ell|$, then $\mathcal{S}_g$ is a subquadrangle.
\end{lemma}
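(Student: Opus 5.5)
The plan is to follow the proof of Lemma \ref{lem:P-two-orbit-quadra}: go through the list of Lemma \ref{subquadrangle_structure} and rule out every type except (4). Each hypothesis on $H$ will be used to kill a specific type. The singleton-orbit argument handles the degenerate types. The two inequalities $|\mathcal{L}_i|>|\mathcal{P}_j|$ and $|\mathcal{P}_k|>|\mathcal{L}_\ell|$ handle the grid and the dual grid respectively.

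First, since each of the four orbits has size at least $2$, we have $|\mathcal{P}_g|\ge 4$ and $|\mathcal{L}_g|\ge 4$. So types (0), (1) and (1') are impossible.

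If $\mathcal{S}_g$ is of type (2), we first note that the point $x$ is unique. Any fixed point $x'\ne x$ lies on at most one line through $x$, while $\mathcal{L}_g$ has at least $4$ lines, all through $x$. Hence every element of $H$ fixes $x$, and $\{x\}$ is an $H$-orbit of size $1$ on $\mathcal{P}_g$, a contradiction. Type (2') is dual: the line $\ell$ is unique, so it gives a singleton $H$-orbit on $\mathcal{L}_g$.

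The main step is type (3), a grid with parameters $(s_1,s_2)$, $s_1<s_2$. The lines split into the family $\{\ell_0,\dots,\ell_{s_1}\}$, whose lines each carry $s_2+1$ fixed points, and the family $\{\ell'_0,\dots,\ell'_{s_2}\}$, whose lines each carry $s_1+1$ fixed points. Since $s_1\neq s_2$ and $H$ preserves the number of points of $\mathcal{S}_g$ on a line, each family is a union of $H$-orbits. With exactly two line orbits, the orbits are precisely these families, of sizes $s_1+1$ and $s_2+1$.

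Now let $\mathcal{P}_j$ be any point orbit. Because $H$ is transitive on $\{\ell_k\}$, the number $a_j\ge 1$ of points of $\mathcal{P}_j$ on $\ell_k$ is independent of $k$. The lines $\ell_k$ partition the points, so $|\mathcal{P}_j|=(s_1+1)a_j$. Likewise $|\mathcal{P}_j|=(s_2+1)b_j$ with $b_j\ge 1$. Hence $|\mathcal{P}_j|\ge s_2+1\ge |\mathcal{L}_i|$ for all $i,j$, contradicting $|\mathcal{L}_i|>|\mathcal{P}_j|$.

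Type (3') is the exact dual. The point orbits are the two point families, of sizes $s_1+1$ and $s_2+1$. Every line orbit has size at least $s_2+1$, since it meets each point of a family in a constant positive number of lines. This contradicts $|\mathcal{P}_k|>|\mathcal{L}_\ell|$.

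So only type (4) remains, and $\mathcal{S}_g$ is a subquadrangle. The only delicate point is the counting in the grid case, in particular checking that the orbit–family identification and the constant intersection numbers are justified. The degenerate grid with $s_1=s_2$ is not in (3); it is itself a subquadrangle of order $(s_1,1)$, which is consistent with the conclusion.
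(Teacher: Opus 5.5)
Your proof is correct and follows the paper's argument essentially step for step. You first rule out (0)--(2') using the orbit sizes and the singleton-orbit argument. In the grid case you then use that the lines of each family partition $\mathcal{P}_g$, which gives $|\mathcal{P}_j|\ge s_2+1\ge|\mathcal{L}_i|$, and you argue dually for the dual grid. You are slightly more careful than the paper: you justify why the common point in type (2) (resp.\ the common line in (2')) is unique and hence fixed by $H$, and why the two line orbits must coincide with the two grid families, whereas the paper leaves both points implicit.
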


\begin{proof}
By Lemma~\ref{subquadrangle_structure}, $\mathcal{S}_g$ must be one of the types (0)–(4). Types (0), (1), (1'), (2) and (2') are impossible by the same reasoning as in Lemma~\ref{lem:P-two-orbit-quadra}.
If $\mathcal{S}_g$ is a grid (type (3)), then the two line orbits are the two line families. In a grid, every point lies on exactly one line from each family. By $H$-transitivity on each line orbit, each line in $\mathcal{L}_j$ contains the same positive number of points from $\mathcal{P}_i$, so $|\mathcal{P}_i|$ is a multiple of $|\mathcal{L}_j|$ for all $i,j$. Hence $|\mathcal{P}_i| \ge |\mathcal{L}_j|$  holds for every pair $(i,j)$, contradicting the existence of $|\mathcal{L}_i| > |\mathcal{P}_j|$.
If $\mathcal{S}_g$ is a dual grid (type (3')), then by duality the same argument shows that $|\mathcal{L}_j| \ge |\mathcal{P}_i|$ for all $i,j$, contradicting the hypothesis that some point orbit exceeds some line orbit.
Thus $\mathcal{S}_g$ must be of type (4), i.e., a subquadrangle.
\end{proof}

\begin{lemma}[Benson's Theorem {\cite[1.9.1, 1.9.2]{PayneThas2009}}]\label{lem:benson}
Let $g$ be an automorphism of a finite generalized quadrangle $\mathcal{S}$ of order $(s,t)$, and let  $\mc{S}_g=(\PA_g,\LA_g)$ be the fixed substructure of $g$.  Define
\(\mathcal{L}_1(g)=\{\,\ell\in \mathcal{L}\mid \ell^g\neq \ell\sim \ell^g\,\}\).
Then
\[
(s+1)|\mathcal{L}_g|+|\mathcal{L}_1(g)|\equiv st+1 \pmod{s+t}.
\]
\end{lemma}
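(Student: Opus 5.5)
The statement is Benson's theorem, quoted from Payne--Thas. I would prove it by counting eigenvalues of the collinearity matrix, as in the classical argument.

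Let $A$ be the $v\times v$ collinearity matrix of the point graph, where $v=|\mathcal P|$. Set $A_{xy}=1$ if $x\ne y$ are collinear and $0$ otherwise. Standard counting in a generalized quadrangle shows that $A$ satisfies
\[
A^2-(s-t-1)A-(s+1)tI=(t+1)J .
\]
So the point graph is strongly regular, with eigenvalues $s(t+1)$ (multiplicity $1$), $s-1$ and $-t-1$. The multiplicities $m_1$ of $s-1$ and $m_2$ of $-t-1$ are obtained from $\operatorname{tr}A=0$ and $1+m_1+m_2=v$. One gets
\[
m_1=\frac{st(s+1)(t+1)}{s+t},
\]
and this is an integer.

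Next, let $P$ be the permutation matrix of $g$ on points. It commutes with $A$, since $g$ preserves collinearity. Hence $Q=AP$ has trace equal to the number of points $x$ with $x\sim x^g$ and $x\ne x^g$. I call this number $N$. I would build the combination
\[
M=A+(t+1)I-\tfrac{t+1}{s+1}J .
\]
It vanishes on the all-ones vector, has eigenvalue $0$ on the $(-t-1)$-eigenspace, and equals $(s+t)I$ on the $(s-1)$-eigenspace $E$. Therefore $\frac{1}{s+t}MP$ is the matrix of $g$ restricted to $E$, composed with the projection onto $E$. Its trace equals the trace of the integral representation of $\langle g\rangle$ on $E$, which is an algebraic integer and rational, hence an integer. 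In fact it is the trace of a permutation-module summand. This gives
\[
\operatorname{tr}(MP)=N+(t+1)|\mathcal P_g|-(t+1)v\cdot\tfrac{|\mathcal P_g|}{v}\cdot\ldots \equiv 0 \pmod{s+t}.
\]
I need to handle $\operatorname{tr}(JP)=v$ carefully. To keep everything integral, I would instead use the trace on $E$ directly:
\[
\operatorname{tr}(g|_E)=\frac{N+(t+1)|\mathcal P_g|-(t+1)}{s+t}\cdot\ldots
\]
The exact form is obtained by subtracting the trivial eigenspace's contribution, namely $\frac{s(t+1)+t+1-\frac{(t+1)v}{s+1}}{s+t}$. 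Simplifying should give
\[
(t+1)|\mathcal P_g|+|\mathcal P_1(g)|\equiv 1+st \pmod{s+t},
\]
where $\mathcal P_1(g)=\{x: x\ne x^g\sim x\}$ and $1+st$ comes from the trivial eigenvalue term.

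The statement is written for lines, with $(s+1)|\mathcal L_g|+|\mathcal L_1(g)|$. So I would apply the above to the dual quadrangle of order $(t,s)$, which swaps $s$ and $t$. The modulus $s+t$ and the constant $st+1$ are symmetric, so nothing else changes.

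The main obstacle is the bookkeeping of the constant term: checking that the contribution of the all-ones eigenvector gives exactly $st+1$ modulo $s+t$. I would verify this by a direct computation with $v=(s+1)(st+1)$ and $(t+1)v/(s+1)=(t+1)(st+1)$. The integrality of the trace on $E$ follows since $g$ has finite order: its eigenvalues on $E$ are roots of unity, so the trace is an algebraic integer, and it is rational because it is expressed through integer traces.
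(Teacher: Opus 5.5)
Your strategy is the standard one: it is Benson's eigenvalue--trace argument as given in Payne--Thas 1.9.1, which the paper simply cites without proof, and your final dualization to the line version is correct. However, several computations as written are wrong.

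\textbf{The quadratic identity.} The point graph is strongly regular with parameters $(v,s(t+1),s-1,t+1)$, so the correct identity is
\[
A^2-(s-t-2)A-(s-1)(t+1)I=(t+1)J .
\]
Your displayed identity is not this one. It is even inconsistent with the eigenvalues you go on to list, which are correct: for $x=s-1$ one gets $x^2-(s-t-1)x-(s+1)t=-2t\neq 0$.

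\textbf{The matrix $M$.} Your $M=A+(t+1)I-\frac{t+1}{s+1}J$ does \emph{not} vanish on the all-ones vector $\mathbf{j}$. Since $J\mathbf{j}=v\mathbf{j}$ with $v=(s+1)(st+1)$, one gets $M\mathbf{j}=s(t+1)(1-t)\mathbf{j}$. So the claim that $\frac{1}{s+t}MP$ is the compression of $P$ to $E$ is false. The coefficient that kills $\mathbf{j}$ is $\frac{t+1}{st+1}$. With it, and using $\operatorname{tr}(JP)=v$ (which does not involve $|\mathcal{P}_g|$, contrary to your garbled display), you get
\[
\operatorname{tr}(P|_E)=\frac{N+(t+1)|\mathcal{P}_g|-(s+1)(t+1)}{s+t}.
\]
Your fallback of subtracting the trivial-eigenspace contribution $\frac{(s+1)(t+1)-(t+1)(st+1)}{s+t}$ from $\frac{1}{s+t}\operatorname{tr}(MP)$ also gives this formula. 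So the argument is salvageable, but you never actually carried out that simplification.

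\textbf{The clean fix.} Drop the $J$-term entirely, as Payne--Thas do. The matrix $A+(t+1)I$ has eigenvalues $(s+1)(t+1)$, $s+t$ and $0$ on $\langle\mathbf{j}\rangle$, on $E$, and on the $(-t-1)$-eigenspace, and all three spaces are $P$-invariant. Hence
\[
N+(t+1)|\mathcal{P}_g|=\operatorname{tr}\bigl((A+(t+1)I)P\bigr)=(s+1)(t+1)+(s+t)\operatorname{tr}(P|_E).
\]
Here $\operatorname{tr}(P|_E)$ is rational by this very equation. It is an algebraic integer because $P$ has finite order, so it is an integer. Since $(s+1)(t+1)=st+1+(s+t)$, this gives the point version $(t+1)|\mathcal{P}_g|+N\equiv st+1 \pmod{s+t}$. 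Dualizing then yields the stated line version.
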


\begin{lemma}\label{lem:no_2_3_elemen}
      Suppose that $\mathcal{S}$ is a thick generalized quadrangle of order $s$ and that $G$ is an automorphism group of $\mathcal{S}$ acting transitively on both points and lines. Then there is no element $g \in G$ of order two or three such that $g^G \cap G_\a =g^G \cap G_\ell=\varnothing$ for some point $\a$ and line $\ell$ in $\mc{S}$.
\end{lemma}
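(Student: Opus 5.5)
The plan is to combine Benson's congruence (Lemma~\ref{lem:benson}) with an elementary geometric argument showing that an element of order $2$ or $3$ without fixed points and lines cannot map any line to a concurrent line. Argue by contradiction: suppose $g\in G$ has order $2$ or $3$ and $g^G\cap G_\a=g^G\cap G_\ell=\varnothing$ for some point $\a$ and line $\ell$.

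\textbf{Step 1: $g$ fixes no point and no line.} If $g$ fixed a point $\beta$, then by point-transitivity $\beta=\a^h$ for some $h\in G$. Then $hgh^{-1}\in g^G\cap G_\a$, which is impossible. Hence $\PA_g=\varnothing$, and in the same way line-transitivity gives $\LA_g=\varnothing$.

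\textbf{Step 2: $\LA_1(g)=\varnothing$.} Let $m$ be a line with $m^g\neq m$ and $m\sim m^g$. If $g$ has order $2$, then $g$ swaps $m$ and $m^g$. So the unique common point $x=m\cap m^g$ satisfies $x^g=m^g\cap m=x$, a fixed point, which contradicts Step~1.

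If $g$ has order $3$, the lines $m,m^g,m^{g^2}$ are pairwise distinct, since $g$ has no fixed line. Applying $g$ to $m\sim m^g$ gives $m^g\sim m^{g^2}$ and $m^{g^2}\sim m$, so the three lines are pairwise concurrent. If their three pairwise intersection points were distinct, they would form a triangle, which a generalized quadrangle does not contain by axiom (2). Hence all three lines pass through a common point $x=m\cap m^g$. Then $x^g=m^g\cap m^{g^2}=x$, again a fixed point, a contradiction. Thus $\LA_1(g)=\varnothing$ in both cases.

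\textbf{Step 3: Benson's congruence.} With $s=t$, Lemma~\ref{lem:benson} now reads
\[
0=(s+1)\cdot 0+0\equiv s^2+1 \pmod{2s}.
\]
So $2s\mid s^2+1$, and in particular $s\mid s^2+1$, i.e.\ $s\mid 1$. This contradicts thickness, which requires $s>1$.

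The only step needing care is Step~2 for order $3$: one must check that the three lines are distinct and that the absence of triangles forces a common point. Everything else follows directly from transitivity and Lemma~\ref{lem:benson}.
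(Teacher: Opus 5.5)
Your proof is correct and follows essentially the same route as the paper: transitivity gives $\PA_g=\LA_g=\varnothing$, the point $m\cap m^g$ is fixed (using the absence of triangles when $g$ has order $3$) so $\LA_1(g)=\varnothing$, and Benson's congruence then forces $2s\mid s^2+1$. The only cosmetic difference is that you obtain $\PA_g=\LA_g=\varnothing$ by a direct conjugation argument rather than from the fixed-point formula of Lemma~\ref{Fix_points_number}, and you spell out the distinctness and no-triangle details that the paper leaves brief.
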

\begin{proof}
Suppose such an element $g$ exists. Since $G$ acts transitively on points, applying Lemma~\ref{Fix_points_number} to the point set $\mathcal{P}$ gives $|\mathcal{P}_g| = |\mathcal{P}| \cdot |g^G \cap G_\alpha| / |g^G| = 0$, hence $\mathcal{P}_g = \varnothing$. Similarly, transitivity on lines yields $\mathcal{L}_g = \varnothing$. Define \(\mathcal{L}_1(g)=\{\,\ell\in \mathcal{L}\mid \ell^g\neq \ell\sim \ell^g\,\}\). We claim that \(\mathcal{L}_1(g)=\varnothing\). Indeed, let \(\ell\in\mathcal{L}_1(g)\). If \(g\) is an involution, then the point \(\ell\cap\ell^g\) is fixed by \(g\). If \(g\) has order three, then \(\ell,\ell^g,\ell^{g^2}\) are pairwise concurrent, and since a generalized quadrangle contains no triangles, they must all pass through a common point \(P\), which is fixed by \(g\). In either case, this contradicts \(\mathcal{P}_g=\varnothing\). Now apply Lemma \ref{lem:benson}. Since $|\mathcal{L}_g| = 0$ and $|\mathcal{L}_1(g)| = 0$, we have
    \[
    0 = (s+1)|\mathcal{L}_g| + |\mathcal{L}_1(g)| \equiv s^2+1 \pmod{2s},
    \]
    which is impossible for $s>1$. Hence no such $g$ exists.
\end{proof}

\section{Maximal subgroups and conjugacy classes of $\PSp_4(q)$}\label{Subsection_maximal_subgroups}
In this section we collect some information on $\PSp_4(q)$ that will be used throughout the paper. The projective symplectic group $\PSp_4(q)$ is defined as the quotient $\Sp_4(q)/Z(\Sp_4(q))$, where $Z(\Sp_4(q))$  denotes the center of $\Sp_4(q)$. 
Let $\hat{~}$ denote the natural projection from $\Sp_4(q)$ onto $\PSp_4(q)$. For a subgroup $H \leq \Sp_4(q)$, we write $\hat{}H$ for its image under this projection; for an element   $g \in\Sp_4(q)$, we write  $\hat{g}$ for its image. We use the standard ATLAS notation for group structures and extensions \cite{ConwayAtlas}, and follow \cite{bray2013maximal} for the descriptions of the maximal subgroups of $\PSp_4(q)$.

\subsection{Maximal subgroups of almost simple groups $G$ with socle $\PSp_4(q)$}
Let $G$ be an almost simple group with socle $\PSp_4(q)$. Then the complete list of maximal subgroups of $G$ is known.

\begin{lemma}\cite[Table 8.12, Table 8.13 and Table 8.14]{bray2013maximal}\label{maximal_subgroups}
    Let $G$ be an almost simple group with socle $T=\PSp_4(q)$, where $q=p^f\geq3$ for some prime  $p$ and integer $f$. Let $M$ be a maximal subgroup of $G$ not containing $T$. Then $M \cap T$ is isomorphic to one of the groups listed in Table \ref{tab:maximal_subgroups}.
\end{lemma}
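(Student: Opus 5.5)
The lemma is a direct extraction from the classification of maximal subgroups of the low-dimensional classical groups in \cite{bray2013maximal}. I would therefore verify it by reading off the relevant tables, not by proving anything new. Let $G$ be almost simple with socle $T=\PSp_4(q)$, $q=p^f\ge 3$, and let $M<G$ be maximal with $T\not\le M$. Then $G=TM$, so $M\cap T$ is a $G$-invariant-up-to-conjugacy subgroup of $T$. The tables of \cite{bray2013maximal} list precisely such intersections $M\cap T$, organised by the overgroup $G\le \Aut(T)$ in which $M$ is maximal. I would split according to $p$, since $\Aut(T)$ differs in the two cases.

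\textbf{Case $q$ odd.} $\Aut(T)=\mathrm{P}\Gamma\mathrm{Sp}_4(q)$ contains no graph automorphism. I would use Table 8.12, which lists the maximal subgroups of $\Sp_4(q)$ together with the extensions in which they remain maximal. These are the Aschbacher classes $\mathcal C_1$ (the parabolics $P_1,P_2$ and $\Sp_2(q)\times\Sp_2(q)$), $\mathcal C_2$, $\mathcal C_3$, $\mathcal C_5$, $\mathcal C_6$ (when $q=p$) and the $\mathcal S$-class members such as $\PSL_2(q)$ for $p\ge 5$ and $A_6,S_6,A_7,\PSL_2(p)$ with their congruence conditions. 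For each row I would apply the projection $\hat{~}\colon \Sp_4(q)\to\PSp_4(q)$. This amounts to quotienting by the centre $\langle -1\rangle$, which lies in every listed maximal subgroup, and it yields the $\PSp_4(q)$-structure recorded in Table~\ref{tab:maximal_subgroups}.

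\textbf{Case $q$ even.} Here $\PSp_4(q)=\Sp_4(q)$ and $\Aut(T)$ contains a graph automorphism. I would use Table 8.13 for groups $G$ not containing the graph automorphism, and Table 8.14 for those that do. The second case introduces novelty maximal subgroups: the Borel-type subgroup $[q^4]{:}C_{q-1}^2$, the normalisers $C_{q+1}^2{:}D_8$ and $C_{q-1}^2{:}D_8$, and $C_{q^2+1}{:}4$. Meanwhile the pairs $P_1,P_2$ and the pairs of $\mathcal C_1/\mathcal C_8$ classes are fused and cease to be maximal. The excluded case $q=2$ (where $T$ is not simple) is handled by the hypothesis $q\ge 3$.

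The main obstacle is bookkeeping rather than mathematics. For each row one must track the conditions on $q$ and $G$ under which the subgroup is maximal, and make sure the list in Table~\ref{tab:maximal_subgroups} is a superset of all $M\cap T$ occurring across all such $G$. For later use only this containment matters, not exact maximality. Particular care is needed for small $q$, namely $q=3,4,5,7,8$, where some $\mathcal S$-class groups or $\mathcal C_6$ groups appear or disappear.
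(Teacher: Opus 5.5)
Your proposal takes the same approach as the paper, which gives no argument beyond citing Tables 8.12--8.14 of \cite{bray2013maximal}: read off each row (projecting modulo $\langle -I_4\rangle$ when $q$ is odd, and for $q$ even including the novelties (c), (f), (i) that arise when $G$ induces a graph automorphism), noting that only containment of every $M\cap T$ in the list is needed. There are a few harmless bookkeeping slips: in that source Tables 8.12 and 8.13 are the geometric and $\mathcal S$-class tables for $q$ odd, while Table 8.14 alone covers $q$ even; and the graph automorphism interchanges $P_1\leftrightarrow P_2$, $\Sp_2(q)\wr\mathrm{S}_2\leftrightarrow\mathrm{O}_4^+(q)$ and $\Sp_2(q^2){:}2\leftrightarrow\mathrm{O}_4^-(q)$, so it pairs $\mathcal C_2,\mathcal C_3$ with $\mathcal C_8$ rather than $\mathcal C_1$ with $\mathcal C_8$ (the stabilizer of a nondegenerate $2$-space is not a separate $\mathcal C_1$ member but lies in $\Sp_2(q)\wr\mathrm{S}_2$).
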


\begin{table}[!htbp]
\aboverulesep=0pt \belowrulesep=0pt
\setlength{\abovecaptionskip}{0cm}
\setlength{\belowcaptionskip}{0cm}
\caption{The possibilities for $M\cap T$}
\label{tab:maximal_subgroups}
\centering
\begin{adjustbox}{center,max width=\textwidth}
$ \small
\setlength{\arraycolsep}{5pt}
\begin{array}{llll}
\toprule
\text{Type} & M \cap T & \text{Condition} & [T:M \cap T] \\
\midrule
&&&\\[-0.4cm]
\text{(a)} & \hat~ [q]^{1+2}:((q-1)\times \mathrm{Sp}_2(q)) & q\ \text{odd} & (q^2+1)(q+1)\\[0.1cm]
\text{(b)} & \hat~ [q]^3:\mathrm{GL}_2(q) & & (q^2+1)(q+1)\\[0.1cm]
\text{(c)} &  [q^4]:\mathrm{C}_{q-1}^2 & q\ \text{even} & (q^2+1)(q+1)^2\\[0.1cm]
\text{(d)} & \hat~ \mathrm{GL}_2(q).2 & q\geq5,\ q\ \text{odd} & q^3(q^2+1)(q+1)/2\\[0.1cm]
\text{(e)} & \hat~ \mathrm{Sp}_2(q)^2:\mathrm{S}_2 & & q^2(q^2+1)/2\\[0.1cm]
\text{(f)} & {\mathrm{C}_{q+\epsilon}}^2:\mathrm{D}_8,\ \epsilon=\pm1 & q\ \text{even} & q^4(q-\epsilon)^2(q^2+1)/8\\[0.1cm]
\text{(g)} & \hat~ \mathrm{Sp}_2(q^2):2 & & q^2(q^2-1)/2\\[0.1cm]
\text{(h)} & \hat~ \mathrm{GU}_2(q).2 & q\geq5,\ q\ \text{odd} & q^3(q^2+1)(q-1)/2\\[0.1cm]
\text{(i)} &  \mathrm{C}_{q^2+1}:4 & q\ \text{even} & q^4(q^2-1)^2/4\\[0.1cm]
\text{(j)} & \hat~ \mathrm{Sp}_4(q_0).(2,r) & q\ \text{odd},\ q=q_0^r,\ r\ \text{prime} &
\dfrac{q^{4}(q^4-1)(q^2-1)}{q_0^4(q_0^4-1)(q_0^2-1)(2,r)}\\[0.4cm]
\text{(k)} & \mathrm{Sp}_4(q_0) & q\ \text{even},\ q=q_0^r,\ r\ \text{prime} &
\dfrac{q^{4}(q^4-1)(q^2-1)}{q_0^4(q_0^4-1)(q_0^2-1)}\\[0.3cm]
\text{(l)} & \hat~ 2_-^{1+4}.\mathrm{S}_5 & q=p\equiv \pm1\pmod 8  &
q^4(q^4-1)(q^2-1)/3840\\[0.1cm]
\text{(m)} & \hat~ 2_-^{1+4}.\mathrm{A}_5 & q=p\equiv \pm3\pmod 8  &
q^4(q^4-1)(q^2-1)/1920\\[0.1cm]
\text{(n)} & \mathrm{SO}_4^{\epsilon}(q),\ \epsilon=\pm1 &  q\ \text{even} & q^2(q^2+\epsilon)/2\\[0.1cm]
\text{(o)} & \mathrm{A}_6 &  q=p\equiv 5,7\pmod{12},\ q\neq 7&
q^4(q^4-1)(q^2-1)/720\\[0.1cm]
\text{(p)} & \mathrm{S}_6 &  q=p\equiv 1,11\pmod{12} &
q^4(q^4-1)(q^2-1)/1440\\[0.1cm]
\text{(q)} & \mathrm{A}_7 & q=7 & 54880\\[0.1cm]
\text{(r)} &  \mathrm{PSL}_2(q) & p\geq5,\ q\ \text{odd} & q^3(q^4-1)\\[0.1cm]
\text{(s)} & \mathrm{Sz}(q) & q\ \text{even},\ f\geq3\ \text{odd} & q^2(q+1)^2(q-1)\\

\bottomrule
\end{array}
$
\end{adjustbox}
\end{table}

\subsection{Conjugacy classes of elements in $\PSp_4(q)$}
In this subsection, we list some conjugacy classes of elements in $T=\PSp_4(q)$.

\begin{lemma}\label{Conj_Order=2}
Let \(T=\PSp_4(q)\) with $q$ odd. Let \(H\) be a subgroup of \(\Sp_4(q)\) containing \(Z(\Sp_4(q))\), and let \hspace{1pt} \(\hat{}H\) be its image in \(T\). The following hold: 
\begin{enumerate}
    \item[(1)] \(T\) has exactly two conjugacy classes of involutions, denoted \(Y_1\) and \(Y_2\). The class \(Y_1\) consists of the images of involutions in \(\Sp_4(q)\) under the natural projection, while \(Y_2\)   consists of the images of elements of order \(4\) in \(\Sp_4(q)\) whose squares are the nontrivial element $-\mathrm{I}_4$ of \(Z(\Sp_4(q))\). Moreover,
\[
|Y_1|=\frac{q^2}{2}(q^2+1). 
\]
\item[(2)] If $H$ contains $m$ involutions, then $|Y_1 \cap \hat{}H|=(m-1)/2$.
\end{enumerate}
\end{lemma}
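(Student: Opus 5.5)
We work in $\Sp_4(q)$ with $q$ odd, where $Z=Z(\Sp_4(q))=\{\pm\mathrm{I}_4\}$, and lift involutions of $T$ back to $\Sp_4(q)$. Let $\hat{x}\in T$ be an involution and $x\in\Sp_4(q)$ a preimage. Then $x^2\in Z$, so either $x^2=\mathrm{I}_4$ or $x^2=-\mathrm{I}_4$; since $-x$ is the other preimage, this condition depends only on $\hat{x}$.

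\emph{Case $x^2=\mathrm{I}_4$, $x\neq\pm \mathrm{I}_4$.} Then $V=V_+\oplus V_-$ splits into the $\pm1$-eigenspaces. For $u\in V_+$ and $w\in V_-$ we have $(u,w)=(ux,wx)=-(u,w)$, so $(u,w)=0$, and hence both eigenspaces are nondegenerate. Thus $\dim V_\pm=2$, and $x=(\mathrm{I}_2,-\mathrm{I}_2)$ in $\Sp_2(q)\times\Sp_2(q)$. By Witt's theorem all such $x$ are conjugate in $\Sp_4(q)$. The element $-x$ merely swaps the roles of $V_+$ and $V_-$, so $x$ and $-x$ have the same image. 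The centralizer is $C_{\Sp_4(q)}(x)=\Sp_2(q)\times\Sp_2(q)$, so there are $|\Sp_4(q)|/|\Sp_2(q)|^2$ such $x$. Hence
\[
|Y_1|=\frac{q^4(q^4-1)(q^2-1)}{2\,q^2(q^2-1)^2}=\frac{q^2(q^2+1)}{2}.
\]
Equivalently, $Y_1$ is in bijection with unordered decompositions $V=W\perp W^\perp$ into nondegenerate planes.

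\emph{Case $x^2=-\mathrm{I}_4$.} Such elements exist; for example, take $J$ of order $4$ in $\Sp_2(q)=\mathrm{SL}_2(q)$ and form $(J,J)$. Their images are involutions of $T$. They are not conjugate to $Y_1$, because any lift of a conjugate of an element of $Y_1$ squares to $\mathrm{I}_4$, while conjugation preserves $x^2=-\mathrm{I}_4$. One must also show that these elements form a single $T$-class, $Y_2$. The route we take is this: $x$ makes $V$ a $2$-dimensional space over $\mathbb{F}_q[x]$. If $-1$ is a square in $\mathbb{F}_q$, then $x$ has eigenvalues $\pm i$ with totally isotropic eigenspaces in duality, giving $C(x)\cong\GL_2(q)$. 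Otherwise $\mathbb{F}_q[x]\cong\mathbb{F}_{q^2}$ and a Hermitian form arises, giving $C(x)\cong\mathrm{GU}_2(q)$. In either subcase a Witt-type argument yields a single $\Sp_4(q)$-class. In $T$ this class might a priori split or fuse because $x\sim -x$; we note that $-x=x^{-1}$ is conjugate to $x$ in both subcases, so the images form one class. This gives exactly two classes in total. We expect this single-class verification for the second type to be the main obstacle; the count of $Y_1$ is routine.

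\emph{Part (2).} Let $H\ge Z$ contain $m$ involutions. The involutions of $H$ are $-\mathrm{I}_4$ together with the elements $x$ with $x^2=\mathrm{I}_4$ and $x\neq\pm\mathrm{I}_4$. So there are $m-1$ elements of the latter type, and they are closed under $x\mapsto -x$ because $Z\le H$. Their images are exactly the elements of $Y_1\cap\hat{}H$: an element of $\hat{}H$ lying in $Y_1$ has both preimages in $H$, and these square to $\mathrm{I}_4$. The map $x\mapsto\hat{x}$ is $2$-to-$1$ on this set, so $|Y_1\cap\hat{}H|=(m-1)/2$.
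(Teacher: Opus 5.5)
Your proof is correct. Part (2) is the same $x\leftrightarrow -x$ pairing argument the paper uses; you are slightly more careful in noting that $Z\le H$ puts both preimages of an element of $Y_1\cap\hat{}H$ inside $H$.

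For Part (1) the routes differ. The paper gives no argument and simply cites Wong and Everett et al.\ for the two involution classes and for $|Y_1|$. Your argument is self-contained. The orthogonal eigenspace decomposition $V=V_+\perp V_-$, together with Witt's theorem, gives a single $\Sp_4(q)$-class with centralizer $\Sp_2(q)\times\Sp_2(q)$, so $|Y_1|=q^2(q^2+1)/2$ by orbit--stabilizer. The class invariant $x^2\in Z$ then separates these elements from the lifts with $x^2=-\mathrm{I}_4$.

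This gives more than the citation does. It shows that $C_T(y)$ for $y\in Y_1$ is exactly the type (e) subgroup $\hat{}\Sp_2(q)^2{:}\mathrm{S}_2$, whose index in Table~\ref{tab:maximal_subgroups} is $|Y_1|$. It also yields $|Y_2|=q^3(q^2+1)(q+\varepsilon)/2$, where $q\equiv\varepsilon\pmod 4$.

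Two small remarks.
\begin{itemize}
\item The step you flag as the main obstacle is routine. If $-1$ is a square, $E_{\pm i}$ are complementary totally isotropic subspaces, hence both $2$-dimensional and perfectly paired, so matching dual bases conjugates any two such $x$. If $-1$ is a nonsquare, $h(u,w)=(u,wx)-\iota\,(u,w)$, with $\iota\in\mathbb{F}_{q^2}$ acting as $x$, is a nondegenerate Hermitian form, and uniqueness of such forms over $\mathbb{F}_{q^2}$ gives conjugacy. Note that $\mathbb{F}_q[x]$ is a field only in this second case.
\item Your worry that the class might split in $T$ is unfounded. The image of one $\Sp_4(q)$-class under the projection is always one $T$-class, so the observation $x\sim -x$ is not needed to count classes; it only affects the value of $|Y_2|$.
\end{itemize}
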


\begin{proof}
Part (1) follows from  \cite[Lemma 2.4]{Involution_wong1969characterization} and \cite[Section 3-4]{Involution_everett2020commuting}. For (2), by (1), the set \(Y_1\cap \hat{}H\) consists precisely of the images under  the natural projection of the involutions in \(H\). Since \(Z(\Sp_4(q))=\{I_4,-I_4\}\) and \(\hat{-I_4}\) equals the identity element of \(T\), the central involution \(-I_4\in H\) does not contribute to \(Y_1\cap \hat{}H\). Now let \(x\in H\) be an involution with \(x\neq -I_4\). Then \(-x=x(-I_4)\in H\) is also an involution, \(x\neq -x\), and \(\hat{x}=\hat{-x}\). Thus the \(m-1\) involutions in \(H\setminus\{-I_4\}\) are partitioned into pairs \(\{x,-x\}\), and each pair yields a single element of \(Y_1\cap \hat{}H\). Therefore \(|Y_1\cap \hat{}H|=(m-1)/2\), as required.
\end{proof}

\begin{lemma}\cite[Proposition 6.3]{liebeck1996classical}\label{Conj_order=3}
    Let \(T=\PSp_4(q)\), where $q=p^f$ with $p\geq5$ prime. Then $T$ has exactly two conjugacy classes of elements of order 3. 
\end{lemma}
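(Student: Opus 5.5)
Since $p\ge 5$, every element of order $3$ in $T$ is semisimple. The plan is to lift such elements to $\Sp_4(q)$ and classify them there by their eigenvalues.

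\textbf{Lifting.} Let $\hat x\in T$ have order $3$, with preimage $x\in\Sp_4(q)$. Then $x^3\in Z(\Sp_4(q))=\{\pm \mathrm I_4\}$. If $x^3=-\mathrm I_4$, replace $x$ by $-x$, which satisfies $(-x)^3=\mathrm I_4$. So every $\hat x$ has a unique preimage of order exactly $3$; the other preimage has order $6$.

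Next, suppose $\hat x$ and $\hat y$ are $T$-conjugate, with order-$3$ lifts $x,y$. Then $gxg^{-1}=\pm y$ for some $g\in\Sp_4(q)$. Comparing orders forces the sign to be $+$. Hence the order-$3$ classes of $T$ are in bijection with the conjugacy classes of elements of order $3$ in $\Sp_4(q)$, and it suffices to show there are exactly two of these.

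\textbf{Eigenvalues.} Let $\omega$ be a primitive cube root of unity in $\overline{\mathbb F}_q$. Since $p\neq 3$, an element $x\in\Sp_4(q)$ of order $3$ is diagonalizable over $\overline{\mathbb F}_q$ with eigenvalues in $\{1,\omega,\omega^{-1}\}$. The eigenvalue multiset of a symplectic matrix is closed under inversion, and $x\neq \mathrm I_4$. So the multiset is one of
\[
\{1,1,\omega,\omega^{-1}\}\quad\text{or}\quad\{\omega,\omega,\omega^{-1},\omega^{-1}\}.
\]
These give at most two classes, provided that two semisimple elements of $\Sp_4(q)$ with the same eigenvalues are conjugate in $\Sp_4(q)$.

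\textbf{Conjugacy over $\mathbb F_q$.} This is the main step. Over $\overline{\mathbb F}_q$, two semisimple elements of $\Sp_4$ with equal eigenvalue multisets are conjugate: both are conjugate into a common maximal torus with the same multiset of characters, and the Weyl group of type $C_2$ acts on the diagonal entries by all signed permutations. Since $\Sp_4$ is simply connected, Steinberg's theorem says centralizers of semisimple elements are connected. Lang–Steinberg then implies that $\Sp_4(\overline{\mathbb F}_q)$-conjugacy of $F$-stable semisimple elements descends to a single $\Sp_4(q)$-class.

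If I prefer to avoid algebraic groups, I would argue directly. Decompose $V=\ker(x-1)\perp \ker(x^2+x+1)$; these are orthogonal, hence nondegenerate, for the symplectic form. Then use that an element of order $3$ in $\Sp_2(q)=\mathrm{SL}_2(q)$ is unique up to conjugacy. On a $4$-dimensional space with $x^2+x+1=0$, this makes $V$ a $2$-dimensional space over $\mathbb F_q[\omega]$. This route needs separate handling of the cases where $x^2+x+1$ splits over $\mathbb F_q$ or is irreducible, and that case analysis is the fiddly part.

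\textbf{Existence and distinctness.} Since $3\mid q^2-1$, the group $\mathrm{SL}_2(q)$ contains an element $a$ of order $3$. Write $V=V_1\perp V_2$ as a sum of two hyperbolic planes. Then $a\oplus \mathrm I_2$ and $a\oplus a$ are elements of order $3$ in $\Sp_4(q)$ realizing the two eigenvalue types. Their fixed spaces have dimensions $2$ and $0$, so they are not conjugate. This gives exactly two classes in $T$.
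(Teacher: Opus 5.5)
Your proof is correct. It differs from the paper only in that the paper gives no argument: it quotes Proposition 6.3 of \cite{liebeck1996classical} and moves on, whereas you derive the count directly. Each of your steps holds up:
\begin{itemize}
\item Since $|Z(\Sp_4(q))|=2$ is coprime to $3$, every element of order $3$ in $T$ has a unique lift of order $3$. Comparing orders shows that $T$-conjugacy of such elements is the same as $\Sp_4(q)$-conjugacy of their lifts.
\item Since $p\neq 3$, the lifts are semisimple. Reciprocity of the characteristic polynomial leaves only the eigenvalue types $\{1,1,\omega,\omega^{-1}\}$ and $\{\omega,\omega,\omega^{-1},\omega^{-1}\}$.
\item Conjugacy over $\overline{\mathbb F}_q$ follows from the signed-permutation action of the Weyl group on a maximal torus. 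Because $\Sp_4$ is simply connected, Steinberg's connectedness theorem together with Lang--Steinberg descends this to a single $\Sp_4(q)$-class.
\item The elements $a\oplus \mathrm I_2$ and $a\oplus a$ realize both types, and the dimensions of their fixed spaces show they are not conjugate.
\end{itemize}
The citation buys brevity. Your route buys a self-contained proof that rests only on standard algebraic-group facts, and it identifies the two classes explicitly (fixed spaces of dimensions $2$ and $0$); the paper only ever uses their number, in the argument for type (m). You do not need the elementary linear-algebra alternative you sketch, because the algebraic-group argument is already complete.
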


\begin{lemma}\label{Conj_Order=p}
Let \(T=\PSp_4(q)\), where \(q=p^f\) with \(p\ge 5\) prime. Then \(T\) has exactly six conjugacy classes of elements of order \(p\). Moreover, there exists a conjugacy class \(g^T\) such that \(C_T(g)\) is abelian and \(|C_T(g)|=q^2\).
\end{lemma}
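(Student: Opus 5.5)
The lemma asserts two things: the number of classes of elements of order $p$ in $T=\PSp_4(q)$ with $p\ge5$, and the existence of a class whose centralizer is abelian of order $q^2$. I would handle both through the unipotent class structure of $\Sp_4(q)$ in odd characteristic, then pass to $T$.

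\textbf{Step 1: classify unipotent classes in $\Sp_4(\bar{\mathbb F}_q)$.} For $p$ odd, unipotent classes of $\Sp_4$ over the algebraic closure are parametrized by partitions of $4$ in which odd parts occur with even multiplicity. These are $1^4$, $21^2$, $2^2$ and $4$. Since $p\ge5>4$, every nontrivial unipotent element has order exactly $p$, because its largest Jordan block has size at most $4<p$.

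\textbf{Step 2: count the $\mathbb F_q$-rational classes.} Each geometric class splits into $\Sp_4(q)$-classes indexed by the classes of $A(u)=C(u)/C(u)^\circ$, twisted by Frobenius. For even parts $i$ of multiplicity $m_i$, the component group is $A(u)\cong(\mathbb Z/2)^{\#\{\text{even } i\}}$, so:
\begin{enumerate}
\item[$\bullet$] $21^2$ gives $2$ classes (transvections, with a square or non-square scalar);
\item[$\bullet$] $2^2$ gives $2$ classes (the associated binary quadratic form is split or non-split);
\item[$\bullet$] $4$ gives $2$ classes.
\end{enumerate}
This yields six nontrivial unipotent classes in $\Sp_4(q)$. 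Alternatively, I would simply cite Srinivasan's character table of $\Sp_4(q)$ ($q$ odd) or \cite[Proposition 6.3]{liebeck1996classical}-type tables, which list the classes $A_{21},A_{22},A_{31},A_{32},A_{41},A_{42}$ together with their centralizer orders.

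\textbf{Step 3: pass to $T$.} The projection $\Sp_4(q)\to T$ is injective on elements of order $p$, since $p$ is odd and the center has order $2$. Conjugacy is also preserved: $x^g=-y$ is impossible for unipotent $x,y$. Hence $T$ has exactly six classes of elements of order $p$.

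\textbf{Step 4: the regular unipotent class.} Take $u$ regular unipotent, with Jordan type $4$. In the algebraic group, $C(u)=Z\times C(u)^\circ$, where $C(u)^\circ$ is abelian unipotent of dimension $\operatorname{rank}=2$. Over $\mathbb F_q$ this gives $|C_{\Sp_4(q)}(u)|=2q^2$, matching the class $A_{41}$ or $A_{42}$ in Srinivasan's table. The Sylow $p$-subgroup of $C_{\Sp_4(q)}(u)$ is abelian of order $q^2$ (it is generated by $u$-polynomials and, for $p\ge5$, is elementary abelian). Modding out by $\pm I$, and noting that $-u$ is not conjugate to $u$, gives $C_T(\hat u)$ of order $q^2$. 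It is abelian, being the image of an abelian group.

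\textbf{Main obstacle.} The delicate point is verifying $|C_T(\hat u)|=q^2$ rather than $2q^2$. This requires showing that no $g$ satisfies $u^g=-u$, which holds because $-u$ is not unipotent. It also requires checking that the rational form of $C(u)^\circ$ has exactly $q^2$ points; I would confirm this against the published centralizer orders in Srinivasan's table.
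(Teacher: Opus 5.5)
Your proposal is correct and follows essentially the same route as the paper. The paper reads off the six classes from the signed partitions $(\pm 2,1^2)$, $(\pm 2^2)$, $(\pm 4)$ of Burness--Giudici, which is the same two-classes-per-Jordan-type count as your component-group argument. It then takes the regular unipotent $J_4$ with $|C_{\Sp_4(q)}(J_4)|=2q^2$, divides by the center, and gets abelianness from $C_{\GL_4(q)}(J_4)$ being abelian (polynomials in $J_4$). Your remark that $u$ is not conjugate to $-u$ is a step the paper leaves implicit: it ensures the preimage of $C_T(\hat u)$ is exactly $C_{\Sp_4(q)}(u)$.
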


\begin{proof}
    It follows from \cite[Proposition 3.4.10]{burness2016classical_derangement} that there is a bijection from the set of $T$-classes of elements of order $p$ in $T$ and the set of  nontrivial signed partition of $4$ of the form $(p^{a_p},\epsilon_{p-1}(p-1)^{a_{p-1}},(p-2)^{a_{p-2}}, \cdots, \epsilon_2 2^{a_2},1^{a_1})$ where $\sum_iia_i=4$, $\epsilon_{2i}=\pm$ and $a_i$ is even when $i$ is odd. For \(p\ge 5\), the only possible nontrivial partitions of $4$ satisfying these conditions are 
    \((\pm2,1^2)\), \((\pm2^2)\), and \((\pm4)\). Hence there are exactly six such signed partitions, and consequently six conjugacy classes of elements of order \(p\) in \(T\). Consider the type \((\pm4)\), which corresponds to the Jordan block \(J_4\) in \(\Sp_4(q)\). Let \(\hat{g}=J_4\) and set \(g=\hat{g}Z(\Sp_4(q))\in T\).  By \cite[Lemma 3.4.11]{burness2016classical_derangement}, we have \(|C_{\Sp_4(q)}(\hat g)|=2q^2\).  Since \(Z(\Sp_4(q))\) is contained in \(C_{\Sp_4(q)}(\hat g)\), it follows that \(|C_T(g)| = |C_{\Sp_4(q)}(\hat g)|/|Z(\Sp_4(q))| = q^2\). One readily checks that \(C_{\GL_4(q)}(\hat g)\) is abelian. Therefore, its subgroup \(C_{\Sp_4(q)}(\hat g)\) is also abelian, and consequently \(C_T(g)\) is abelian.
\end{proof}

\begin{lemma}\label{Order_4_p=2}
 Let \(T=\PSp_4(q)\), where \(q=2^f\). Then \(T\) has exactly two conjugacy classes of elements of order $4$. Let \(g_1\) and \(g_2\) be representatives of these two classes. Then, for each \(i\in\{1,2\}\), the centralizer \(C_T(g_i)\) is abelian and $|C_T(g_i)|=2q^2$.
\end{lemma}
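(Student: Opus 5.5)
The plan is to work inside $\Sp_4(q)$ itself, which is harmless here: for $q=2^f$ the centre $Z(\Sp_4(q))$ is trivial, so $T=\PSp_4(q)=\Sp_4(q)$ and $\hat{g}=g$. Every element of order $4$ in $T$ has $2$-power order and so is unipotent. The classification of unipotent conjugacy classes of $\Sp_4(q)$ in even characteristic is available in the same source used for Lemma~\ref{Conj_Order=p}, namely \cite[Section 3.4]{burness2016classical_derangement}. There, unipotent classes are parametrised by Aschbacher--Seitz type labels: sums of $V(2k)$ and $W(k)$ with total dimension $4$. I would first list all labels of total dimension $4$:
\[
V(2)^2,\quad W(1)^2,\quad W(1)+V(2)^2,\quad W(2),\quad V(4),\quad W(1)+V(2),\ \dots
\]
I would then discard those that do not occur in dimension $4$ or that give involutions. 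The involution classes are $a_2$, $b_1$ and $c_2$, and these are exactly the labels whose Jordan blocks all have size at most $2$. The remaining labels have a Jordan block of size $3$ or $4$; since $(x-1)^4=0$ forces $x^4=1$, they give precisely the elements of order $4$. I expect to find exactly two of them, namely $V(4)$ (Jordan form $J_4$) and $W(2)$ (Jordan form $J_2^2$ but with a nonzero quadratic invariant). I would double-check the label for $W(2)$ against the cited tables.

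Next, for each of the two classes I will read off $|C_{\Sp_4(q)}(g_i)|$ from \cite[Lemma 3.4.14]{burness2016classical_derangement}, or the corresponding centralizer-order statement there. I expect $2q^2$ in both cases. As a sanity check I would verify the class equation for unipotent elements:
\[
\sum_{\text{unipotent classes }C}\frac{|T|}{|C_T(x_C)|}=q^{8},
\]
using Steinberg's count $q^{2N}$ with $N=4$ positive roots. Here the involution class sizes are the known values $(q^4-1)$, $(q^4-1)$ and $(q^4-1)(q^2-1)$ for $b_1$, $a_2$ and $c_2$. The two order-$4$ classes should then each contribute $|T|/(2q^2)=q^2(q^4-1)(q^2-1)/2$. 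Solving this single equation already forces the total of the two remaining contributions, which confirms the count of two classes and the orders $2q^2$.

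For abelianness I would argue as in Lemma~\ref{Conj_Order=p}. For the regular class $V(4)$, $g_1$ is conjugate in $\GL_4(q)$ to $J_4$, whose centralizer in $\GL_4(q)$ consists of polynomials in $J_4$ and is therefore abelian; hence $C_T(g_1)$ is abelian. For the second class I would use the exceptional graph automorphism $\tau$ of $\Sp_4(2^f)$, which interchanges long and short root subgroups. The approach is to check that $\tau$ swaps the two classes of order $4$ (it swaps $a_2$ and $b_1$, and the two order-$4$ classes are the only remaining classes with equal centralizer orders other than those fixed). Then $C_T(g_2)\cong C_T(g_1)^\tau$ is abelian as well, and it has the same order $2q^2$.

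The main obstacle I expect is precisely this last step: pinning down that $\tau$ really interchanges the $V(4)$ and $W(2)$ classes rather than fixing each. If that fails, I would compute $C_T(g_2)$ directly. I would take $g_2$ in the unipotent radical of the parabolic subgroup of type (b) in Table~\ref{tab:maximal_subgroups}, in a symplectic basis $e_1,e_2,f_2,f_1$. The centralizer of such an element then lies in that parabolic, and writing it out explicitly should exhibit an abelian group of the form $q^2\times 2$, or at any rate an abelian group of order $2q^2$.
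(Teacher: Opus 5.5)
\textbf{There is a genuine gap: you have misidentified the second class of elements of order $4$.}

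The label $W(2)$ has Jordan form $J_2^2$, so $(x-1)^2=0$ and $x$ is an involution. In fact $W(2)$ is exactly the class $a_2$ that you already listed among the involutions, and its centralizer has order $q^4(q^2-1)$, not $2q^2$. In dimension $4$ the only label with a Jordan block of size at least $3$ is $V(4)$. Odd-size blocks occur only in pairs inside some $W(m)$, and $W(3)$ is $6$-dimensional, so $J_3+J_1$ cannot occur. Hence your enumeration of labels produces one candidate, not two.

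The correct picture is that \emph{both} classes of elements of order $4$ consist of regular unipotent elements with Jordan form $J_4$. The single $V(4)$-class of the algebraic group splits into two $T$-classes (the centralizer there has two components), each of size $q^2(q^4-1)(q^2-1)/2$. This splitting is the missing ingredient, and it has to be imported from a source. The paper takes the two classes and their sizes from the table of Alavi et al., and then $|C_T(g_i)|=|T|/|g_i^T|=2q^2$.

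Your Steinberg count cannot replace this. It only gives the total number $|T|/q^2$ of elements of order $4$. That total is equally consistent with a single class whose centralizer has order $q^2$, which is exactly what your label count would wrongly suggest.

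Once the classes are identified correctly, your argument for $V(4)$ handles both classes at once. Each $g_i$ is a single Jordan block, so $C_{\GL_4(q)}(g_i)$ is the group of invertible polynomials in $g_i$; this group is abelian, and therefore so is $C_T(g_i)$. This is exactly the paper's proof. The paper writes down explicit representatives $g_1,g_2$, both with $\mathrm{rank}(g_i-I)=3$, and their common abelian centralizer in $\GL_4(q)$.

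The graph-automorphism step is therefore unnecessary. Its justification, matching centralizer orders, would not show that $\tau$ interchanges two classes in any case.

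Your fallback also cannot work as stated. For $q$ even, the unipotent radicals of both parabolic subgroups of type (b) are elementary abelian of order $q^3$, so they contain no elements of order $4$. You would have to work in a full Sylow $2$-subgroup, the unipotent radical of a Borel subgroup, instead.
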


\begin{proof}
 From \cite[Table~1]{p=2_Order_4_alavi2022quantitative}, the group \(T\) has exactly two conjugacy classes of elements of order \(4\), denoted \(A_{41}\) and \(A_{42}\). Both classes have size \(q^2(q^4-1)(q^2-1)/2\). 
For even \(q\), we have \(\operatorname{PSp}_4(q) \cong \operatorname{Sp}_4(q)\), and the latter can be defined as
\[
\operatorname{Sp}_4(q) = \{ A \in \operatorname{GL}_4(q) \mid A W A^\top = W \},
\]
where
\[
W = \begin{pmatrix}
0 & 0 & 0 & 1 \\
0 & 0 & 1 & 0 \\
0 & 1 & 0 & 0 \\
1 & 0 & 0 & 0
\end{pmatrix}.
\]

Following \cite[Table~1]{p=2_Order_4_alavi2022quantitative}, we take the representatives
\[
g_1 = \begin{pmatrix}
1 & 1 & 1 & 0 \\
0 & 1 & 1 & 0 \\
0 & 0 & 1 & 1 \\
0 & 0 & 0 & 1
\end{pmatrix} \in A_{41},
\qquad
g_2 = \begin{pmatrix}
1 & 1 & 1 & \zeta \\
0 & 1 & 1 & 0 \\
0 & 0 & 1 & 1 \\
0 & 0 & 0 & 1
\end{pmatrix} \in A_{42},
\]
where \(\zeta \in \mathbb{F}_q\) is a fixed element. A direct computation shows that
\[
C_{\operatorname{GL}_4(q)}(g_1) = C_{\operatorname{GL}_4(q)}(g_2) =
\left\{
\begin{pmatrix}
a & b & b+c & d \\
0 & a & b & c \\
0 & 0 & a & b \\
0 & 0 & 0 & a
\end{pmatrix}
\ \middle|\ a \in \mathbb{F}_q^*,b,c,d \in \mathbb{F}_q
\right\},
\]
which is an abelian group. Consequently, \(C_{\operatorname{Sp}_4(q)}(g_i) = C_{\operatorname{GL}_4(q)}(g_i) \cap \operatorname{Sp}_4(q)\) is also abelian.  Moreover, $|C_T(g_i)|=\frac{|T|}{|g_i^T|}=2q^2$.
\end{proof}

\section{Proof of Theorem \ref{main}}\label{sec_pfmain}
This section is devoted to proving Theorem \ref{main}.
Let $\mathcal{S}=(\mathcal{P}, \mathcal{L}, \mathcal{I} )$ be a finite thick generalized quadrangle of order $(s,t)$  admitting an automorphism group $G$ that acts primitively on both points and lines. Assume that $G$ is  almost simple  with socle $T=\PSp_4(q)$, where $q=p^f\geq 3$ for some prime $p$ and integer $f$. For $\a\in \mathcal{P}$ and $\ell\in \mathcal{L}$, set
\[T_{\a}=G_{\a}\cap T \text{  and  }  T_\ell=G_\ell\cap T.\]
Since $G$ is primitive and $T\trianglelefteq G$, the group $T$ acts transitively on $\mathcal{P}$ and $\mathcal{L}$. Consequently,
\begin{align}
|\mathcal{P}|=&[T:T_{\a}]= (s+1)(st+1),\label{number_point}\\
|\mathcal{L}|=&[T:T_\ell]= (t+1)(st+1).\label{number_line}
\end{align}

By the maximality of $G_\alpha$ and $G_\ell$ in $G$, the subgroups $T_\alpha$ and $T_\ell$ appear in the list of subgroups of $T$ given in Table~\ref{tab:maximal_subgroups}. For convenience, we say that $T_\alpha$ (or $T_\ell$) is of type (x) if it is isomorphic to the group in row (x) of the table. For instance, $T_\alpha$ has type (a) if $T_\alpha\cong \hat~ [q]^{1+2}:((q-1)\times \mathrm{Sp}_2(q))$. For an integer $n$ and a prime $p$, we denote by $n_p$ the highest power of $p$ dividing $n$.

In the next two subsections, we consider two separate cases depending on whether $T_\alpha$ and $T_\ell$ are of the same type or not.

\subsection{The case when $T_\a$ and $T_\ell$ are of the same type}\label{section:same-type}
In this subsection, we assume that $T_\alpha$ and $T_\ell$ are of the same type as listed in Table~\ref{tab:maximal_subgroups}, and we treat each type case by case. By (\ref{number_point}) and (\ref{number_line}), the values of $|\PA|$ and $|\LA|$ are given in the last column of Table \ref{tab:maximal_subgroups}. In some cases, the equations $|\PA|=(s+1)(st+1)$ and $|\LA|=(t+1)(st+1)$ yield immediate arithmetic contradictions. When this fails, we analyze the fixed structures $\mc{S}_g=(\PA_g, \LA_g)$ of a certain special element $g \in T$. Combined with group-theoretic arguments or the determination of possible subquadrangles, this leads to a contradiction.

\begin{lemma}\label{type_a}
    The subgroup $T_\a$ cannot be of type (a).
\end{lemma}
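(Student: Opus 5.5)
The plan is to show that two point-type stabilizers force $\mathcal{S}$ to be $W(3,q)$ itself, whose lines have stabilizers of type (b), not (a). First pin down the parameters. Since we are in the same-type situation, both $T_\alpha$ and $T_\ell$ are of type (a), so $q$ is odd and
\[
|\mathcal{P}|=|\mathcal{L}|=(q^2+1)(q+1).
\]
From \eqref{number_point} and \eqref{number_line}, $(s+1)(st+1)=(t+1)(st+1)$, hence $s=t$. Then $(s+1)(s^2+1)=(q+1)(q^2+1)$. The map $x\mapsto (x+1)(x^2+1)$ is strictly increasing, so $s=t=q$.

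Next I identify the point set. A type (a) subgroup is the stabilizer in $T$ of a $1$-dimensional subspace of the natural module $V=\mathbb{F}_q^4$. Every such subspace is totally isotropic, so $T$ acts on $\mathcal{P}$ as on the $(q^2+1)(q+1)$ points of $\mathrm{PG}(3,q)$. I will use the standard fact that this action has rank $3$. The $T_\alpha$-orbits on points are $\{\alpha\}$, the $q(q+1)$ points of $\alpha^\perp\setminus\{\alpha\}$, and the $q^3$ points outside $\alpha^\perp$; this follows from Witt's theorem, since pairs of distinct points are classified by whether they are orthogonal.

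The collinearity relation of $\mathcal{S}$ is $T$-invariant, and each point is collinear with $s(t+1)=q(q+1)$ other points. So the set of points collinear with $\alpha$ is a union of nontrivial $T_\alpha$-orbits of total size $q(q+1)$. Since $q^3\neq q(q+1)$ and $q^3+q(q+1)\ne q(q+1)$, it must be exactly $\alpha^\perp\setminus\{\alpha\}$. Hence two points are collinear in $\mathcal{S}$ if and only if they are orthogonal in $V$.

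Now identify the lines. Each line of $\mathcal{S}$ consists of $s+1=q+1$ pairwise collinear, hence pairwise orthogonal, points. These points span a totally isotropic subspace, which has dimension at most $2$. A $2$-space contains exactly $q+1$ points, so each line of $\mathcal{S}$ is exactly the point set of a totally isotropic line of $W(3,q)$. The map $\mathcal{L}\to\{\text{t.i.\ lines}\}$ is injective, because two lines of a generalized quadrangle share at most one point. Both sets have size $(q+1)(q^2+1)$, so the map is a bijection, and it is $T$-equivariant. Therefore $T_\ell$ is the stabilizer of a totally isotropic $2$-space, which is of type (b). This contradicts $T_\ell$ being of type (a), since the groups in rows (a) and (b) are non-isomorphic (for instance the Levi factors $(q-1)\times\mathrm{Sp}_2(q)$ and $\mathrm{GL}_2(q)$ differ).

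The main point needing care is the rank-$3$ claim with its exact suborbit lengths $1$, $q(q+1)$, $q^3$. I would justify it by Witt's theorem, or by counting: $|\alpha^\perp|$ has $q^2+q+1$ points, and $T_\alpha$ is transitive on the points inside it other than $\alpha$ and on those outside it, via its unipotent radical and Levi factor. Everything else is counting.
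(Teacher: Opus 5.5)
Your proof is correct, but it takes a different route from the paper's.

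\textbf{The paper's route.} The paper uses the uniqueness of the $T$-conjugacy class of type (a) subgroups to arrange $T_\alpha=T_\ell=H$. Then $\mathcal{L}$, as an $H$-set, is the set of $1$-spaces, so the $H$-orbits on $\mathcal{L}$ have sizes $1$, $q(q+1)$ and $q^3$. The $H$-invariant set of $q+1$ lines through $\alpha$ cannot be a union of such orbits, which is a one-line subset-sum contradiction.

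\textbf{Your route.} You apply the same suborbit data to $\mathcal{P}$ instead, through the collinearity relation. This forces collinearity to coincide with orthogonality. It then reconstructs $\mathcal{S}$ as $W(3,q)$ and identifies $T_\ell$ as the stabilizer of a totally isotropic $2$-space, i.e.\ of type (b).

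\textbf{What each approach buys.} The paper's argument is shorter and never has to compare rows (a) and (b). Yours does not need the conjugacy-class uniqueness, and it proves more: whenever $T_\alpha$ is of type (a) and $s=t=q$, one gets $\mathcal{S}\cong W(3,q)$. That would also settle the mixed (a)/(b) case of Lemma~\ref{W(3,q)-q-odd} in the same stroke.

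\textbf{The one soft spot.} Your final non-isomorphism step needs tightening. Saying that the Levi factors $(q-1)\times\mathrm{Sp}_2(q)$ and $\mathrm{GL}_2(q)$ differ is loose: the factors must be compared after projecting to $\PSp_4(q)$, and you need an isomorphism invariant such as $H/O_p(H)$. It is cleaner to observe that for odd $q$ the group $O_p(T_\alpha)$ is the nonabelian special group $q^{1+2}$. By contrast, the unipotent radical of the stabilizer of a totally isotropic $2$-space is elementary abelian of order $q^3$.
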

\begin{proof}
   Suppose, for a contradiction, that $T_\alpha$ is of type (a) with $q$ odd.  From Table \ref{tab:maximal_subgroups}, $|\PA|=|\mathcal{L}|=(q^2+1)(q+1)=(s+1)(s^2+1)$, which  forces $s = q$. According to \cite[Table 8.12 and Table 8.13]{bray2013maximal},  there is a unique conjugacy class of subgroups isomorphic to $\hat~ [q]^{1+2}:((q-1)\times \mathrm{Sp}_2(q))$ in $T$. Hence, after replacing \(T_\alpha\) by a suitable conjugate, we may assume \(T_\alpha = T_\ell\). Set $H := T_\alpha = T_\ell$. Geometrically, $H$ is the stabilizer of a $1$-dimensional totally isotropic subspace in the $4$-dimensional symplectic space 
    (see \cite[Table 2.3]{bray2013maximal}). Now, $H$ fixes $\alpha$ and acts on the line set $\mathcal{L}$. By \cite[Theorem 8.2]{taylor1992geometry}, \(H\) has exactly three orbits on \(\mathcal{L}\), of sizes \(1\), \(q(q+1)\), and \(q^3\).  Let $\Lambda$ be the set of $q+1$ lines through $\a$. Since $H$ fixes $\alpha$, it leaves $\Lambda$ invariant. Thus $\Lambda$ is a union of $H$-orbits on $\mathcal{L}$, which is impossible since $|\Lambda| = q+1$ cannot be expressed as a sum of any subset of $\{1, q(q+1), q^3\}$. Therefore, $T_\a$ cannot be of type (a).
\end{proof}

\begin{lemma}\label{W(3,q)-q-even}
    If the subgroup $T_\alpha$ is of type \emph{(b)}, then $\mc{S}$ is the classical generalized quadrangle $W(3,q)$ for even $q\ge 4$.
\end{lemma}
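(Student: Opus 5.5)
The plan is to identify both $\mathcal{P}$ and $\mathcal{L}$ with $T$-sets of totally isotropic subspaces, and then read off incidence from the suborbit structure. The argument splits according to the parity of $q$.

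\textbf{Parameters.} Suppose $T_\alpha$ is of type (b). We are in Subsection~\ref{section:same-type}, so $T_\ell$ is also of type (b). By Table~\ref{tab:maximal_subgroups}, $|\mathcal{P}|=|\mathcal{L}|=(q^2+1)(q+1)$. Exactly as in Lemma~\ref{type_a}, the equations (\ref{number_point}) and (\ref{number_line}) give $s=t$ and then $(s+1)(s^2+1)=(q+1)(q^2+1)$, hence $s=t=q$.

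\textbf{$q$ odd.} By \cite[Tables 8.12, 8.13]{bray2013maximal} there is a unique $T$-class of subgroups $\hat~[q]^3:\mathrm{GL}_2(q)$, namely the stabilizers of totally isotropic $2$-spaces. So after conjugation we may take $H:=T_\alpha=T_\ell$, the stabilizer of a totally isotropic line $L_0$ of $W(3,q)$. By \cite[Theorem 8.2]{taylor1992geometry}, $H$ has three orbits on totally isotropic lines: $\{L_0\}$, the $q(q+1)$ lines meeting $L_0$ in a point, and the $q^3$ lines skew to $L_0$. The $q+1$ lines of $\mathcal{S}$ through $\alpha$ form an $H$-invariant set. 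Its size $q+1$ is not a sum of a subset of $\{1,q(q+1),q^3\}$, so this case is impossible. This is the same argument as in Lemma~\ref{type_a}, and it explains why only even $q$ survives.

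\textbf{$q$ even, same class.} Now $q=2^f\ge4$, and type (b) covers two $T$-classes $P_1$ and $P_2$, the stabilizers of totally isotropic points and of totally isotropic lines of $W(3,q)$; they are interchanged by the graph automorphism. If $T_\alpha$ and $T_\ell$ are conjugate, the orbit-size argument above applies verbatim: for $P_1$ acting on points, the orbits have sizes $1$, $q(q+1)$, $q^3$, and dually for $P_2$. This gives a contradiction.

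\textbf{$q$ even, different classes.} Hence $T_\alpha$ and $T_\ell$ lie in different classes. Replacing $\mathcal{S}$ by its dual if needed, we may assume $T_\alpha\in P_1$ and $T_\ell\in P_2$. We then identify $\mathcal{P}$ $T$-equivariantly with the totally isotropic $1$-spaces and $\mathcal{L}$ with the totally isotropic $2$-spaces. Incidence is a $T$-invariant relation, so the set of $q+1$ lines through $\alpha$ is a union of $T_\alpha$-orbits on totally isotropic $2$-spaces. These orbits are the $q+1$ lines containing $\alpha$ and the $q^3+q^2$ lines not containing $\alpha$. Therefore the lines of $\mathcal{S}$ through $\alpha$ are exactly the totally isotropic $2$-spaces containing $\alpha$. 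So incidence is inclusion, and $\mathcal{S}\cong W(3,q)$ with $q\ge4$ even.

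\textbf{Main obstacle.} The step needing the most care is justifying that $\mathcal{P}$ and $\mathcal{L}$ may be identified with these $T$-sets. This requires that the stabilizers $T_\alpha$, $T_\ell$ pin down the actions up to equivalence; possibly $G$ also contains graph automorphisms, in which case a type-(b) intersection would not arise from a maximal subgroup. It also requires computing the $T_\alpha$-orbits on $2$-spaces, which I would cite from \cite{taylor1992geometry} rather than recompute.
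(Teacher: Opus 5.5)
Your proposal is correct and follows the paper's proof essentially step for step. It obtains $s=t=q$ from the counts, derives the orbit-size contradiction (orbits of sizes $1$, $q(q+1)$, $q^3$) whenever $T_\alpha$ and $T_\ell$ are $T$-conjugate (which covers all odd $q$ and the same-class case for even $q$), and, up to duality, uses the two $T_\alpha$-orbits of sizes $q+1$ and $q^2(q+1)$ on totally isotropic $2$-spaces to force incidence to be inclusion. The identification you flag as the main obstacle is standard, since a transitive $T$-action is determined up to equivalence by the conjugacy class of its point stabilizer; and since the argument uses only $T$, possible graph automorphisms in $G$ play no role.
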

\begin{proof}
   Suppose that $T_\alpha$ is of type (b). From Table \ref{tab:maximal_subgroups}, we have $|\PA|=|\mathcal{L}|=(q^2+1)(q+1)=(s+1)(s^2+1)$,  hence $s=q$. We first consider the case where $q$ is odd. According to \cite[Tables 8.12 and 8.13]{bray2013maximal}, there is a unique conjugacy class of subgroups isomorphic to $\hat{}q^3:\mathrm{GL}_2(q)$. Thus, after replacing \(T_\alpha\) by a suitable conjugate, we may assume \(T_\alpha = T_\ell\). The following argument is similar to that in the proof of Lemma \ref{type_a}. According to the geometric description in \cite[Table 2.3]{bray2013maximal}, $T_\alpha$ is the stabilizer of a $2$-dimensional totally isotropic subspace in the $4$-dimensional symplectic space. Moreover, $T_\alpha=T_\ell$ has exactly three orbits on $\LA$, of sizes  \(1\), \(q(q+1)\), and \(q^3\), which yields the same contradiction as in type (a).

Therefore \(q\) must be even. By \cite[Table 8.14]{bray2013maximal}, there are exactly two conjugacy classes of subgroups isomorphic to $\hat{}q^3:\mathrm{GL}_2(q)$. If $T_\a$ and $T_\ell$ are conjugate in $T$, then the same argument as in the case where $q$ is odd yields a contradiction. Hence, by \cite[Notes in Table 8.14]{bray2013maximal} and up to duality, it follows that \(T_\alpha\) and \(T_\ell\) are the stabilizer of a $1$-dimensional totally isotropic subspace and a $2$-dimensional totally isotropic subspace in the $4$-dimensional symplectic space, respectively. By Witt's Lemma \cite[Theorem 7.4]{taylor1992geometry}, $T_\alpha$ has exactly two orbits on $\LA$, namely $\mathcal{O}_1=\{W\in \LA \mid \alpha \leq W\}$ and $\mathcal{O}_2=\{W \in \LA \mid \alpha \nleq W\}$. These orbits have sizes $q+1$ and $q^2(q+1)$, respectively. Since the set $\Lambda$ of $q+1$ lines through $\alpha$ is the union of $T_\alpha$-orbits, comparing the orbit sizes, we deduce that $\Lambda=\mathcal{O}_1$. Thus the incidence relation is given by inclusion, and consequently \(\mathcal{S}\) is isomorphic to \(W(3,q)\) for even \(q \ge 4\).  
\end{proof}

\begin{lemma}
    The subgroup $T_\a$ cannot be of type (c).
\end{lemma}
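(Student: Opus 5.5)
Suppose, for a contradiction, that $T_\alpha$ is of type (c), so $q$ is even and $T_\alpha\cong [q^4]:\mathrm{C}_{q-1}^2$ is a Borel subgroup of $T$. Since we are in the same-type case, $T_\ell$ is also of type (c), and Table~\ref{tab:maximal_subgroups} gives
\[
|\mathcal{P}|=|\mathcal{L}|=(q^2+1)(q+1)^2 .
\]
Equating $(s+1)(st+1)=(t+1)(st+1)$ forces $s=t$, so $(s+1)(s^2+1)=(q+1)^2(q^2+1)$. The plan is to show that this Diophantine equation has no solution with $s>1$ and $q=2^f\ge 4$.

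The first step is to locate $s$ by size. The right-hand side is roughly $q^4$, while $(s+1)(s^2+1)$ is roughly $s^3$, so $s\approx q^{4/3}$. I would try congruence and divisibility arguments modulo small moduli or modulo $q$. Reducing mod $q$ gives $(s+1)(s^2+1)\equiv 1\pmod q$, so $s(s^2+s+1)\equiv 0\pmod q$. The right-hand side is odd, hence $s$ is even, and then $s^2+s+1$ is odd, which yields $q\mid s$. Writing $s=kq$ with $k\approx q^{1/3}$, we get
\[
(kq+1)(k^2q^2+1)=(q+1)^2(q^2+1).
\]

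The second step expands both sides and compares coefficients:
\[
k^3q^3+k^2q^2+kq+1=q^4+2q^3+2q^2+2q+1 .
\]
Dividing by $q$ gives $k^3q^2+k^2q+k=q^3+2q^2+2q+2$. Reducing mod $q$ shows $k\equiv 2\pmod q$. Since $k\ge 1$ and $k\approx q^{1/3}<q$, this forces $k=2$. Substituting $k=2$ gives $8q^2+4q+2=q^3+2q^2+2q+2$, that is, $q^2-6q-2=0$, which has no integer solution. This gives the contradiction.

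If some slack appears in the size estimate (for instance, $k=q+2$ would need to be excluded), it suffices to note that $s\le t^2$ is automatic, but $s^3<|\mathcal{P}|$ gives $k^3q^3<2q^4$, so $k<(2q)^{1/3}<q+2$. The main obstacle is simply making this elementary number theory airtight. Should it fail, the fallback is a fixed-point argument: apply Lemma~\ref{lem:no_2_3_elemen} to an element of order $3$ dividing $q^2+1$ or $q\pm1$ that lies in no conjugate of the Borel subgroup. Such an element exists because $3\mid q^2-1$ for even $q$, and elements in a torus of order $q^2+1$ fix no flag. I expect the arithmetic route to suffice.
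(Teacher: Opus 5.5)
Your argument is correct and is essentially the paper's own proof for the same-type case. Reduction mod $q$ gives $q\mid s$, and your congruence $k\equiv 2\pmod q$ is exactly the paper's reduction mod $q^2$ giving $s\equiv 2q\pmod{q^2}$. A size bound then forces $s=2q$, which leads to $q^2-6q-2=0$. The only cosmetic difference is that you bound $k^3<2q$ from $s^3<|\mathcal{P}|<2q^4$, whereas the paper observes that $s\ge q^2+2q$ would give $s^3>q^6>|\mathcal{P}|$.

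The order-$3$ fallback is unnecessary, and as written it would not work in general. First, $3\nmid q^2+1$. Second, when $q\equiv 1\pmod 3$, every element of order $3$ is diagonalizable over $\mathbb{F}_q$ and fixes a flag, so it lies in a Borel subgroup.
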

\begin{proof}
    Suppose, for a contradiction, that $T_\a$ is of type (c). From Table \ref{tab:maximal_subgroups},
    \begin{equation}\label{equa_type_c}
        |\PA|=|\mathcal{L}|=(q^2+1)(q+1)^2=(s+1)(s^2+1),
    \end{equation}
     where \(q \ge 4\) is even. Hence \(s\) must be even.  Reducing \eqref{equa_type_c}  modulo $q$ yields $s(s^2+s+1)\equiv0 \pmod q$. Hence $q $ divides $ s$ as $s^2+s+1$ is odd. Write $s=qm$ for some positive integer $m$. If $m=1$, i.e., $s=q$, then substituting into \eqref{equa_type_c} gives $q(q^2+1)(q+1)=0$, which is impossible. Thus $m\geq 2$. Reducing \eqref{equa_type_c} modulo $q^2$ further yields $q(m-2) \equiv 0 \pmod{q^2}$ This implies \(q \mid (m-2)\), so write \(m-2 = qk\) for some integer \(k \ge 0\). Then \(s = q^2k + 2q\).
If \(k = 0\), then \(s = 2q\). Substituting into \eqref{equa_type_c} yields
 \(q^2(q^2-6q-2) = 0\), which is impossible for \(q \ge 4\). Thus \(k \ge 1\).
 Then $s>q^2$ and so \[(s+1)(s^2+1)>s^3 > q^6>(q^2+1)(q+1)^2,\] contradicting \eqref{equa_type_c}. Therefore equation \eqref{equa_type_c}  admits no solution, and $T_\alpha$ cannot be of type (c).
\end{proof}

\begin{lemma}\label{type_d}
    The subgroup $T_\a$ cannot be of type (d).
\end{lemma}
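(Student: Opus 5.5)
We argue by contradiction: suppose $T_\alpha$ is of type (d), so $q\ge 5$ is odd and, since in this subsection $T_\ell$ has the same type,
\[
|\PA|=|\LA|=\frac{q^3(q^2+1)(q+1)}{2}=(s+1)(s^2+1).
\]
Hence $s=t$, and the problem becomes showing that this Diophantine equation has no solution $s$ with $q=p^f\ge 5$ odd.

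First we extract the $p$-adic information. Reducing modulo $q^3$ gives $(s+1)(s^2+1)\equiv 0\pmod{q^3}$. For odd $p$ we have $\gcd(s+1,s^2+1)\mid 2$, so $p$ divides only one of the two factors. Thus either $q^3\mid s+1$ or $q^3\mid s^2+1$.

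\emph{Case $q^3\mid s+1$.} Then $s\ge q^3-1$, so
\[
(s+1)(s^2+1)\ge q^3\bigl((q^3-1)^2+1\bigr),
\]
which exceeds $q^3(q^2+1)(q+1)/2$ for $q\ge 5$. This is a contradiction.

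\emph{Case $q^3\mid s^2+1$.} Here we use size estimates. Since $s^3<(s+1)(s^2+1)$ and $q^3(q^2+1)(q+1)/2<q^6$ for $q\ge 5$, we get $s<q^2$. On the other hand, $q^3\mid s^2+1$ forces $s^2+1\ge q^3$, so $s\ge q^{3/2}$. Therefore $s+1\le q^2$, and the cofactor $\frac{(s+1)(s^2+1)}{q^3}$ must equal $(q^2+1)(q+1)/2$, which is about $q^3/2$. Write $s^2+1=q^3u$. Then
\[
(s+1)\,u=\frac{(q^2+1)(q+1)}{2}.
\]
Comparing sizes via $s\approx (q^3u)^{1/2}$ gives $u^{3/2}\approx q^{3/2}/2$, hence $u\approx q/2^{2/3}$ and $s\approx 0.79\,q^2$. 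To pin this down we reduce modulo suitable small moduli. For example, $s^2\equiv -1\pmod{q^3}$ forces $q\equiv 1\pmod 4$. We can also reduce the identity $(s+1)u=(q^2+1)(q+1)/2$ modulo $q$, which gives $(s+1)u\equiv 1/2\pmod q$, and combine this with $s^2\equiv -1\pmod q$ and the size bounds.

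This last case is the main obstacle, since pure congruences may leave residual solutions. The fallback is to produce a contradiction either by a tight two-sided inequality on $s$ showing that no integer $s$ can satisfy the equation, or by fixed-point and Benson arguments. Concretely, take $g\in Y_1$, count $|\PA_g|$ and $|\LA_g|$ using Lemma~\ref{Fix_points_number}, Lemma~\ref{Conj_Order=2}(2) and the number of involutions in $\GL_2(q).2$, and then apply Lemma~\ref{lem:benson} modulo $2s$. If that is still insufficient, use an element of order $3$ (Lemma~\ref{Conj_order=3}) together with Lemma~\ref{lem:no_2_3_elemen}.
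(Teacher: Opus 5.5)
You have a genuine gap: the case $q^3\mid s^2+1$ is never closed. Your reduction to the two cases $q^3\mid s+1$ and $q^3\mid s^2+1$ agrees with the paper, and so does your size argument that disposes of the first. In the second case, however, none of your steps rules anything out: the estimates $q^{3/2}\le s<q^2$ and $u\approx q/2^{2/3}$, and the reduction of $(s+1)u=(q^2+1)(q+1)/2$ modulo $q$, are all compatible with a solution. The ``fallbacks'' are only announced, not carried out.

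The missing idea is a $2$-adic comparison, and you already derived its input. From $s^2\equiv -1\pmod{q^3}$ you get $q\equiv 1\pmod 4$. Then $q+1\equiv q^2+1\equiv 2\pmod 4$, so $|\PA|=q^3(q^2+1)(q+1)/2$ is even but not divisible by $4$. Since $|\PA|=(s+1)(s^2+1)$ is even, $s$ must be odd, because for even $s$ both factors are odd. But for odd $s$, both $s+1$ and $s^2+1$ are even, so $4\mid (s+1)(s^2+1)$, a contradiction. This is exactly how the paper finishes the case, and no size estimates are needed.

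The fallbacks would not have rescued the argument as stated. Benson's congruence (Lemma~\ref{lem:benson}) involves $|\mathcal{L}_1(g)|$, which the class counting of Lemma~\ref{Fix_points_number} does not determine.

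Lemma~\ref{lem:no_2_3_elemen} needs a class of elements of order $2$ or $3$ that misses both $T_\alpha$ and $T_\ell$. The type (d) subgroup acts as $\mathrm{diag}(A,A^{-\top})$ on $W\oplus W'$, with $W,W'$ totally isotropic. It meets both involution classes $Y_1$ and $Y_2$: take $A=\mathrm{diag}(-1,1)$, and $A$ with $A^2=-I$. When $q\equiv 1\pmod 3$ it also meets both classes of elements of order $3$: take $A=\mathrm{diag}(\omega,1)$ and $A=\omega I$, with $\omega\in\mathbb{F}_q$ a primitive cube root of unity. So that route could at best handle $q\equiv 2\pmod 3$.
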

\begin{proof}
    Suppose, for a contradiction, that $T_\a$ is of type (d). From Table \ref{tab:maximal_subgroups},
    \begin{equation*}
        |\PA|=|\mathcal{L}|=q^3(q^2+1)(q+1)/2=(s+1)(s^2+1),
    \end{equation*}
    where \(q\) is odd. Since $\gcd(s+1,s^2+1)=\gcd(s+1,2)$ divides $2$ and $q$ is odd, we have either $q^3 \mid (s+1)$ or $q^3 \mid (s^2+1)$. If $q^3 \mid (s+1)$, then $s+1\geq q^3$.
    Consequently, \[(s+1)(s^2+1) \ge q^3(q^6-2q^3+2)>q^3(q^2+1)(q+1)/2=|\PA|,\] a contradiction. Hence $q^3 \mid (s^2+1)$ and so $p \equiv 1\pmod{4}$. Then $q^2+1 \equiv 2\pmod{4}$ and $q+1\equiv 2\pmod{4}$, which implies that $(q^3(q^2+1)(q+1)/2)_2=2$. Therefore, $|\PA|$ is even and $s$ is odd. However, $((s+1)(s^2+1))_2\geq4>(q^3(q^2+1)(q+1)/2)_2$, which leads to a contradiction. Therefore  $T_\alpha$ cannot be of type (d).
\end{proof}

\begin{lemma}\label{type_e_f_g_i_n}
    The subgroup $T_\a$ cannot be of type (e), (f), (g), (i) or (n).
\end{lemma}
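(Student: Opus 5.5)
Since $T_\alpha$ and $T_\ell$ have the same type, the last column of Table~\ref{tab:maximal_subgroups} gives $|\PA|=|\LA|$. By Lemma~\ref{parameters}\ref{vb} this forces $s=t$, so in every case I would aim for a purely arithmetic contradiction with
\[
N:=|\PA|=(s+1)(s^2+1),
\]
where $N$ is one of
\begin{itemize}
\item $q^2(q^2+1)/2$ (type (e)),
\item $q^4(q-\epsilon)^2(q^2+1)/8$ (type (f)),
\item $q^2(q^2-1)/2$ (type (g)),
\item $q^4(q^2-1)^2/4$ (type (i)),
\item $q^2(q^2+\epsilon)/2$ (type (n)).
\end{itemize}
I would use the facts already exploited in Lemma~\ref{type_d}:
\begin{itemize}
\item $\gcd(s+1,s^2+1)$ divides $2$;
\item if $s$ is even then $N$ is odd;
\item if $s$ is odd then $s^2+1\equiv 2\pmod 4$, so $N_2=2(s+1)_2$.
\end{itemize}
I would also use the size estimate $s^3<N<(s+1)^3$, which pins $s$ to roughly $N^{1/3}$.

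\textbf{Parity step.} For type (e) with $q$ odd, $q^2+1\equiv 2\pmod 4$, so $N$ is odd. This is impossible, because one of $s+1$, $s^2+1$ is always even. For the even-$q$ types, namely (e) with $q$ even, (f), (i) and (n), the number $N$ is even, so $s$ is odd and $(s+1)_2=N_2/2$. This power of $2$ is large:
\begin{itemize}
\item $q^2/4$ for (e) and (n);
\item $q^4/16$ for (f);
\item $q^4/8$ for (i).
\end{itemize}
Hence $s+1\ge N_2/2$. Comparing $(N_2/2)^3$ with $N$ kills each case for all but very small $q$. For instance, in (i) we get $(q^4/8)^3>q^8/4$ as soon as $q\ge 4$. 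In (e) and (n) the crude bound needs $q\ge 8$, and the remaining values $q=4$ (and $q=2$ if relevant, though $q\ge 3$) I would check directly. For example, for $q=4$ in (e), $N=136$ lies strictly between $(s+1)(s^2+1)$ for $s=4$ (giving $85$) and $s=5$ (giving $156$).

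\textbf{Type (g) with $q$ odd.} This is the case I expect to be the main obstacle, since parity alone does not decide it. The odd prime $p$ divides at most one of $s+1$, $s^2+1$, so $q^2$ divides one of them. If $q^2\mid s+1$, then $(s+1)(s^2+1)\ge q^2(q^4-2q^2+2)>N$, a contradiction. So $q^2\mid s^2+1$. Write $s^2+1=kq^2$ and $s+1=(q^2-1)/(2k)$. From $s<N^{1/3}<q^{4/3}$ we get $k<q^{2/3}$. Substituting $s=(q^2-1-2k)/(2k)$ into $s^2+1=kq^2$ and reducing modulo $q^2$ gives
\[
8k^2+4k+1\equiv 0\pmod{q^2}.
\]
But $0<8k^2+4k+1<q^2$ once $q$ is moderately large, which is a contradiction. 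The few small odd $q$ left over I would rule out by direct computation of $N$ against consecutive values of $(s+1)(s^2+1)$. The same template, splitting the $p$-part of $N$ between the coprime factors and then reducing modulo $q^2$, also serves as a fallback in the even cases if the $2$-part bounds are not quite sharp enough.
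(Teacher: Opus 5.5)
\textbf{There is a genuine gap in your parity step for type (e) with $q$ odd.} It is true that $N=q^2(q^2+1)/2$ is odd there. But your claim that one of $s+1$, $s^2+1$ is always even is false. If $s$ is even, both factors are odd (for instance $s=2$ gives $3\cdot 5=15$), exactly as your own bullet ``if $s$ is even then $N$ is odd'' records. So oddness of $N$ only forces $s$ to be even, and no contradiction follows. This is the case the paper actually has to work for, and its argument only collapses after reducing to $s=2$.

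Your type (g) template fills the gap. Since $p$ is odd and $\gcd(s+1,s^2+1)\mid 2$, $q^2$ divides $s+1$ or $s^2+1$, and the first is too large. So $s^2+1=kq^2$ and $s+1=(q^2+1)/(2k)$. Then $(q^2+1-2k)^2+4k^2=4k^3q^2$, hence $8k^2-4k+1\equiv 0\pmod{q^2}$. As $8k^2-4k+1>0$ and $k<q^{2/3}$, this is impossible except for small $q$, which you check directly. For example, $q=3$ gives $N=45$, strictly between $40$ and $85$.

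For comparison, the paper reduces modulo $s+1$ instead. From $(q^2+1)/2=(s+1)m$ and $s^2+1=q^2m$ it gets $m\equiv -2\pmod{s+1}$, hence $m\ge s-1$ and $q^2\ge 2s^2-3$. A polynomial inequality then forces $s=2$.

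\textbf{Two smaller omissions.} First, type (g) carries no parity condition in Table~\ref{tab:maximal_subgroups}, so you must also treat (g) with $q$ even. There $N=q^2(q^2-1)/2$, and the argument is identical to (n) with $\epsilon=-1$, including the direct check of $N=120$ at $q=4$. Second, at $q=4$ your crude $2$-part bound does not dispose of (f). There $s+1\ge 16$ only gives $(s+1)(s^2+1)\ge 3616$, which is below both $4896$ and $13600$, so a direct check is needed, as in the paper.

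Otherwise your even-$q$ argument is the paper's. Your modulo-$q^2$ treatment of (g) with $q$ odd is a valid alternative to the paper's ``analogous to (e)'' argument.
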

\begin{proof}
    Suppose, for a contradiction, that $T_\a$ is of type (e), (f), (g), (i) or (n). We first treat the cases of type (e) and (g) with $q$ odd.  The argument for type (g) is analogous to that for type (e), so we present only the details for type (e). Assume that \(T_\alpha\) is of type (e) with \(q\) odd. From Table \ref{tab:maximal_subgroups},
    \begin{equation}\label{|P|_type_e}
        |\PA|=|\mathcal{L}|=q^2(q^2+1)/2=(s+1)(s^2+1).
    \end{equation}
    Proceeding as in the proof of Lemma \ref{type_d}, we deduce that $q^2 \mid (s^2+1)$ and  write $s^2+1=q^2m$ for some integer $m\geq 1$. Substituting it into (\ref{|P|_type_e}) gives $(q^2+1)/2=(s+1)m$, which implies $s+1$ divides $q^2+1$, i.e. $q^2\equiv -1 \pmod {s+1}$. On the other hand, $-2 \equiv s^2+1 = q^2 m \equiv -m \pmod{s+1}$.  
Therefore, $m+2 \equiv 0 \pmod{s+1}$ and so $m\geq s-1$. From  \eqref{|P|_type_e} we have $(q^2+1)/2=(s+1)m\geq (s+1)(s-1) = s^2-1$, hence \(q^2 \ge 2s^2-3\). Using this lower bound for \(q^2\) in \eqref{|P|_type_e}  gives $(s+1)(s^2+1)=q^2(q^2+1)/2\geq (2s^2-3)(s^2-1)$.
    Expanding leads to $2s^4-s^3-6s^2-s+2\leq0$ and so $s=2$. Then \eqref{|P|_type_e} becomes $q^2(q^2+1)/2=15$, which has no integer solution for $q$. Therefore type (e) cannot occur for odd \(q\).
   
 
    
   Next we consider the cases of type (e), (g) and (n) simultaneously for even $q\geq4$. From Table \ref{tab:maximal_subgroups}, we have 
   \begin{equation}\label{type_e_even}
       |\PA|=|\mathcal{L}|=q^2(q^2+\epsilon)/2=(s+1)(s^2+1)
   \end{equation}
  where $\epsilon=1$ or $-1$. Since $q$ is even,  $s$ is odd, and hence $\gcd(s+1,s^2+1)=2$. It follows from   $(s^2+1)_2=2$ that $(s+1)_2=q^2/4$. A direct comparison shows that for $q\geq 8$,
  $$(s+1)(s^2+1)\geq \frac{q^2}{4}\left(\frac{q^4}{16}-\frac{q^2}{2}+2\right)> \frac{q^2(q^2+\epsilon)}{2}=|\PA|,$$
   a contradiction. For $q=4$, substituting it into (\ref{type_e_even}) gives $(s+1)(s^2+1)=136$ or $120$, neither of which has an integer solution $s$. Hence types (e), (g) and (n) are impossible for even $q$.

   Now we consider type (f) with $q$ even. In this case, $|\mathcal{P}| = |\mathcal{L}| = q^4(q-\epsilon)^2(q^2+1)/8=(s+1)(s^2+1)$ is even and so $s$ is odd. As in the above argument in the case of type (n), we deduce that $(s^2+1)_2=2$ and $(s+1)_2=q^4/16$. For $q \ge 8$, one checks that \[(s+1)(s^2+1) \geq \frac{q^4}{16}\left(\frac{q^8}{256}-\frac{q^4}{8}+2\right)> \frac{q^4(q-\epsilon)^2(q^2+1)}{8}=|\PA|,\] a contradiction.  For $q=4$, we have $(s+1)(s^2+1)=4896$ or $13600$, neither of which has an integer solution $s$. The argument for type (i) is analogous to that for type (f) with $q$ even, so we omit the details. This completes the proof.
\end{proof}

\begin{lemma}
    The subgroup $T_\a$ cannot be of type (h).
\end{lemma}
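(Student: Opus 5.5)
The plan is to treat type (h) like types (d) and (e): derive a number-theoretic contradiction from the parameter equation, and keep a fixed-element argument in reserve. Suppose $T_\alpha$ is of type (h), so $q=p^f\ge 5$ is odd. Since we are in Subsection~\ref{section:same-type}, $T_\ell$ has the same type, and Table~\ref{tab:maximal_subgroups} together with \eqref{number_point} and \eqref{number_line} gives
\[
|\PA|=|\LA|=\frac{q^3(q^2+1)(q-1)}{2}=(s+1)(s^2+1),
\]
so $s=t$.

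\textbf{First step: locating $q^3$.} I would follow the proof of Lemma~\ref{type_d}. Because $\gcd(s+1,s^2+1)$ divides $2$ and $q$ is odd, $q^3$ divides either $s+1$ or $s^2+1$.
\begin{enumerate}
\item If $q^3\mid s+1$, then $(s+1)(s^2+1)\ge q^3(q^6-2q^3+2)$, which exceeds $|\PA|$. This is a contradiction.
\item Hence $q^3\mid s^2+1$. Since $-1$ is then a square modulo $p$, we get $p\equiv 1\pmod 4$.
\end{enumerate}

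\textbf{Second step: $2$-parts.} Unlike type (d), the $2$-parts need not clash here, because $q-1\equiv 0\pmod 4$. I would still record what they give. The order $|\PA|$ is even, so $s$ is odd. Then $(s^2+1)_2=2$, which forces $(s+1)_2=(q-1)_2/2$.

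\textbf{Third step: congruences as in type (e).} Write $s^2+1=q^3m$ with $m\ge 1$. Substituting gives
\[
2(s+1)m=(q^2+1)(q-1)=q^3-q^2+q-1 .
\]
This yields two congruences modulo $s+1$. From the display, $q^3\equiv q^2-q+1\pmod{s+1}$. From $s\equiv -1\pmod{s+1}$, we have $q^3m=s^2+1\equiv 2\pmod{s+1}$. Combining them gives
\[
m(q^2-q+1)\equiv 2\pmod{s+1}.
\]
For the size comparison, $|\PA|\approx s^3$ gives $s\approx q^{5/3}/2^{1/3}$, so $m=(s^2+1)/q^3$ is roughly $q^{1/3}$. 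The aim is to show that the integer $m(q^2-q+1)-2$ is positive and smaller than some small multiple of $s+1$. That would pin down $m(q^2-q+1)-2=k(s+1)$ for a small $k$. Together with $2(s+1)m=(q^2+1)(q-1)$, this eliminates $s$ and leaves a polynomial identity in $q$, $m$ and $k$. I would then rule it out by size or by reduction modulo $q$, after also using $(s+1)_2=(q-1)_2/2$.

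\textbf{Main obstacle and fallback.} I expect this last bounding step to be the main obstacle. The naive estimates do not immediately separate $m(q^2-q+1)\approx q^{7/3}$ from multiples of $s+1\approx q^{5/3}$. If the arithmetic does not close, I would switch to fixed structures. I would take $g$ of order $p$ in the regular unipotent class $(\pm4)$ of Lemma~\ref{Conj_Order=p}. The unipotent elements of $\hat{}\mathrm{GU}_2(q).2$ are unitary transvections, which have Jordan type $2^2$ over $\mathbb F_q$. Therefore $g^T\cap T_\alpha=g^T\cap T_\ell=\varnothing$, and so $\PA_g=\LA_g=\varnothing$. I would then apply Benson's Theorem (Lemma~\ref{lem:benson}), which gives $|\LA_1(g)|\equiv s^2+1\pmod{2s}$. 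The task is to count $\LA_1(g)$ through the structure of $g$-orbits, since a concurrent chain of lines closes into an ordinary $p$-gon, and combine this with the arithmetic information from the first three steps.
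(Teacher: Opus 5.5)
Your first step matches how the paper begins, and your $2$-part computation is correct, though it plays no role. However, the proposal stops exactly where the proof has to happen, so there is a genuine gap.

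\textbf{The arithmetic route.} Part of the problem is a miscalculation. We have $|\PA|=q^3(q^2+1)(q-1)/2\approx q^6/2$, not $q^5/2$. Hence $s\approx 2^{-1/3}q^2$ and $m=(s^2+1)/q^3\approx 2^{-2/3}q$. Precisely, $q^6/4<s^3<q^6/2$, which gives $q/3<m<q$. With these sizes the quotient $(m(q^2-q+1)-2)/(s+1)$ in your congruence is about $2^{-1/3}q$. It is unbounded, so it cannot be pinned to a small $k$.

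The missing idea is to eliminate $s$ entirely and reduce modulo $q^3$ rather than modulo $s+1$. Substituting $s+1=(q^2+1)(q-1)/(2m)$ into $s^2+1=q^3m$ gives
\[
(q^2+1)^2(q-1)^2-4m(q^2+1)(q-1)+8m^2=4q^3m^3 .
\]
Hence $F(m):=8m^2+4m(q^2-q+1)+3q^2-2q+1\equiv 0\pmod{q^3}$. Because $m$ is of order $q$, the linear term makes $F(m)$ comparable to $q^3$; indeed $q/3<m<q$ gives $q^3<F(m)<6q^3$. Since $q\equiv 1\pmod 4$, we get $F(m)\equiv 2\pmod 4$, which forces $F(m)=2q^3$. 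This quadratic in $m$ has discriminant $16(q^4+2q^3-3q^2+2q-1)$. That value lies strictly between $16(q^2+q-2)^2$ and $16(q^2+q-1)^2$, so it is not a square. This is the paper's argument.

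\textbf{The fallback.} It does not close the gap either. It is true that a regular unipotent $g$ of type $(\pm4)$ avoids the type (h) subgroup, whose $p$-elements have Jordan type $2^2$. So $\PA_g=\LA_g=\varnothing$. But for $|g|=p\ge 5$ the set $\LA_1(g)$ need not be empty, because closed ordinary $p$-gons are allowed in a generalized quadrangle. This is exactly why Lemma~\ref{lem:no_2_3_elemen} is restricted to orders $2$ and $3$. You give no way to compute $|\LA_1(g)|$. Benson's theorem only says that $|\LA_1(g)|$, a multiple of $p$, satisfies $|\LA_1(g)|\equiv s^2+1\pmod{2s}$. Since $p\mid s^2+1$ implies $p\nmid 2s$, this congruence is solvable, so no contradiction follows without substantial further information.
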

\begin{proof}
    Suppose, for a contradiction, that $T_\a$ is of type (h). From Table \ref{tab:maximal_subgroups}, we have 
    \begin{equation}\label{type_h_|P|}
       |\mathcal{P}| = |\mathcal{L}| = q^3(q^2+1)(q-1)/2 = (s+1)(s^2+1),
    \end{equation}
     where $q \geq 5$ is odd. If $s=2$, then (\ref{type_h_|P|}) has no solution for $q$. Hence we assume $s\geq 3$. Proceeding as in the proof of Lemma~\ref{type_d}, we deduce   $q^3 \mid (s^2+1)$   so $p\equiv 1\pmod{4}$. Write 
    \begin{equation}\label{type_h_s^2+1=q^3k}
        s^2+1=q^3k
    \end{equation}
    for some positive integer $k$. Substituting \eqref{type_h_s^2+1=q^3k} into \eqref{type_h_|P|} yields
    \begin{equation}\label{type_h_|P|_2}
        (q^2+1)(q-1)=2(s+1)k
    \end{equation}
    For \(q \ge 5\), one verifies that $\frac{3q^6}{4} < q^3(q^2+1)(q-1) < q^6$.  Moreover, for \(s \ge 3\), $2s^3 < 2(s+1)(s^2+1) < 3s^3$. Combining these with  \eqref{type_h_|P|} gives $q^6/4<s^3<q^6/2$. From (\ref{type_h_s^2+1=q^3k}), we deduce $(q^3k)^3/2<s^6 < (q^3k)^3$. 
    Hence $q^{12}/16< (q^3k)^3<q^{12}/2$, which gives rise to 
    \begin{equation}\label{k_bound}
     q/3<q/\sqrt[3]{16}<k<q/\sqrt[3]{2}<q.   
    \end{equation}
    Solving \eqref{type_h_|P|_2} for $s$ and substituting into \eqref{type_h_s^2+1=q^3k} leads to \[(q^2+1)^2(q-1)^2-4k(q^2+1)(q-1)+8k^2=4q^3k^3.\] Reducing it modulo $q^3$, we derive $8k^2+4k(q^2-q+1)+(3q^2-2q+1) \equiv 0 \pmod{q^3}$. Define $F(k):=8k^2+4k(q^2-q+1)+(3q^2-2q+1)$. The function \(F(k)\) is strictly increasing in \(k\) because its derivative \(F'(k) = 16k + 4(q^2 - q + 1) > 0\).  Hence, by \eqref{k_bound}, one checks that
    \begin{equation}\label{F(k)}
        q^3< F(q/3)<F(k)<F(q)< 6q^3
    \end{equation}
    Since $q\equiv 1\pmod{4}$, we deduce that $F(k)\equiv 2\pmod{4}$. Combining this with  $q^3 \mid F(k)$ and (\ref{F(k)}), we conclude that $F(k)=2q^3$, which can be reduced to 
    \begin{equation}\label{type_h_K}
        8k^2+4k(q^2-q+1)+(-2q^3+3q^2-2q+1)=0
    \end{equation}
    Consider (\ref{type_h_K}) as a quadratic in \(k\). Its discriminant is $\Delta=16(q^4+2q^3-3q^2+2q-1)$. If (\ref{type_h_K}) has an integer solution $k$, then $\Delta$ must be a perfect square. However, one readily checks that $16(q^2+q-2)^2<\Delta<16(q^2+q-1)^2$ so $\Delta$ cannot be a perfect square, a contradiction. This completes the proof.
\end{proof}

\begin{lemma}\label{typr_j}
    The subgroup $T_\a$ cannot be of type (j).
\end{lemma}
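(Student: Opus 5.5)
We would proceed purely arithmetically, as for types (d), (e) and (h). Suppose $T_\alpha$ is of type (j). Since $T_\ell$ has the same type, $|\PA|=|\LA|$ forces $s=t$. Table \ref{tab:maximal_subgroups} then gives
\[
(s+1)(s^2+1)=\frac{q^{4}(q^4-1)(q^2-1)}{q_0^4(q_0^4-1)(q_0^2-1)(2,r)},\qquad q=q_0^r,\ q\ \text{odd}.
\]
We split the argument according to whether $r=2$ or $r$ is an odd prime.

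\emph{Case $r=2$.} Here $q=q_0^2$ and the right-hand side equals $q_0^4(q_0^4+1)(q_0^2+1)/2$. Since $q_0$ is odd, both $q_0^4+1$ and $q_0^2+1$ are $\equiv 2\pmod 4$, so $|\PA|_2=2$. Thus $|\PA|$ is even, which forces $s$ odd. But then $s+1$ and $s^2+1$ are both even, so $((s+1)(s^2+1))_2\ge 4$, a contradiction. This is exactly the 2-adic argument of Lemma \ref{type_d}.

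\emph{Case $r$ odd.} The $p$-part of $|\PA|$ is $A:=(q/q_0)^4=q_0^{4(r-1)}$. Since $\gcd(s+1,s^2+1)\mid 2$ and $p$ is odd, $A$ divides either $s+1$ or $s^2+1$.

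If $A\mid s+1$, then $(s+1)(s^2+1)>(s+1)^3/2\ge q_0^{12(r-1)}/2$. On the other hand $|\PA|<2q_0^{10(r-1)}$, and this is a contradiction for every $q_0\ge 3$.

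Hence $A\mid s^2+1$, which gives $p\equiv 1\pmod 4$. Also, for odd $r$ the quotients $(q^4-1)/(q_0^4-1)$ and $(q^2-1)/(q_0^2-1)$ are odd, so $|\PA|$ is odd and therefore $s$ is even. Now imitate the type (h) argument. Write
\[
s^2+1=Ak,\qquad (s+1)k=B,
\]
where $B$ is the $p'$-part of $|\PA|$. The size estimates $s^3\asymp AB$ and $B\asymp A^{3/2}$ pin down $k$ to a narrow window of order $A^{2/3}$. Eliminating $s$ then gives a polynomial congruence in $k$ modulo $A$. We would then try to show, as with $F(k)$ in the type (h) lemma, that the relevant value lies strictly between two consecutive admissible multiples of $A$, or that a resulting discriminant lies strictly between consecutive squares.

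This last step is the main obstacle. Unlike type (h), the quantities depend on two parameters $q_0$ and $r$, so the window estimates are less clean. If the arithmetic does not close, the fallback is group-theoretic. Take $g\in T$ of order a primitive prime divisor of $p^{4f}-1$. Such a prime does not divide $|\Sp_4(q_0)|$, so $g$ fixes no point and no line. We would then try to show $\mathcal{L}_1(g)=\varnothing$: a line $\ell$ with $\ell\sim\ell^g$ would produce, via the $\langle g\rangle$-orbit of $\ell$ and the absence of triangles, a point fixed by $g$. Granting that, Lemma \ref{lem:benson} gives $0\equiv s^2+1\pmod{2s}$, which is impossible, in the same way as in Lemma \ref{lem:no_2_3_elemen}.
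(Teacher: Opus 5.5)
Your treatment of $r=2$ is correct (it is the paper's Case 4), and so is the reduction, for $r$ odd, to $q_0^{4(r-1)}\mid s^2+1$, $p\equiv 1\pmod 4$ and $s$ even. But there are two genuine gaps.

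\textbf{Unequal primes $r_0\neq r_1$ are missing.} Your opening step ``$T_\ell$ has the same type, so $|\mathcal{P}|=|\mathcal{L}|$ and $s=t$'' is false. Type (j) is a family indexed by the prime $r$. In general $T_\alpha\cong\PSp_4(q_0).(2,r_0)$ and $T_\ell\cong\PSp_4(q_1).(2,r_1)$ with $q=q_0^{r_0}=q_1^{r_1}$, and $r_0\neq r_1$ can occur (e.g.\ $q=p^6$, $r_0=2$, $r_1=3$). Then $|\mathcal{P}|\neq|\mathcal{L}|$. These configurations are not handled in the different-type subsection, which only pairs distinct rows of Table \ref{tab:maximal_subgroups}, so your argument covers only $r_0=r_1$. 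The paper handles $r_0=2<r_1$ with the inequality $|T_\alpha|^5<2|T_\ell|^4|T|$ (i.e.\ $|\mathcal{L}|<2^{1/4}|\mathcal{P}|^{5/4}$), which forces $r_1<3$. It handles $2<r_0<r_1$ by a fixed-substructure argument: the one described next for $p\ge 5$, and for $p=3$ via $Y_1$-involutions and a $3$-adic analysis of the subquadrangle's order.

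\textbf{The case $r_0=r_1=r$ odd is not finished.} The arithmetic route is only a plan, and the fallback is wrong. For $g$ of prime order $n\ge 5$, a line $\ell$ with $\ell\sim\ell^g$ only yields a closed non-backtracking walk of length $2n\ge 10$ through $\ell,\ell^g,\dots,\ell^{g^{n-1}}$ in the incidence graph. Girth $8$ does not forbid this; the no-triangle trick of Lemma \ref{lem:no_2_3_elemen} works only for $n\le 3$. In fact your argument proves too much. In $W(3,q)$ with socle $\PSp_4(q)$, an element whose order is a primitive prime divisor of $p^{4f}-1$ acts irreducibly on $\mathbb{F}_q^4$, so it fixes no point and no line. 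Your reasoning would then give $0\equiv q^2+1\pmod{2q}$; hence $\mathcal{L}_1(g)\neq\varnothing$ there.

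The missing idea is the paper's. Since you have shown $p\ge 5$ in this subcase, take $g$ regular unipotent (a single Jordan block $J_4$), so that $C_T(g)$ is abelian of order $q^2$ (Lemma \ref{Conj_Order=p}). For $r$ odd, every $T$-class of elements of order $p$ meets $T_\alpha$ and $T_\ell$ in a single class. So after conjugating you may assume $g\in T_\alpha\cap T_\ell$, and $C_T(g)$ is transitive on $\mathcal{P}_g$ and $\mathcal{L}_g$, of sizes $q^2/q_0^2>2$ and $q^2/q_1^2>2$. Lemma \ref{is a subquadrangle} then forces $C_T(g)$ to be nonabelian, a contradiction. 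The same argument also disposes of $r_0<r_1$ odd when $p\ge 5$.
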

\begin{proof}
    Suppose that $T_\a$ and $T_\ell$ are both of type (j). Then $T_\a \cong \PSp_4(q_0).(2,r_0)$ and $T_\ell \cong \PSp_4(q_1).(2,r_1)$, where $q=q_0^{r_0}=q_1^{r_1}=p^f$ is odd and $r_0$, $r_1$ are primes. By duality, we may assume  $r_0\leq r_1$.
    
 \smallskip
    \textbf{Case 1: $2 < r_0 \leq r_1$ and $p \geq 5$.} In this case, $T_\a \cong \PSp_4(q_0)$ and $T_\ell \cong \PSp_4(q_1)$. By Lemma \ref{Conj_Order=p}, each of $T$, $T_\a$ and $T_\ell$ has exactly six conjugacy classes of elements of order $p$. Moreover, it follows from \cite[Proposition 4.1]{gon2017unipotent_class} that all the six conjugacy classes of $T$ are exactly represented in both $T_\a$ and $T_\ell$. Hence, for every $T$-conjugacy class $C$ of elements of order $p$, the intersections $C\cap T_\alpha$ and $C\cap T_\ell$ are single conjugacy classes in $T_\alpha$ and $T_\ell$, respectively. 
    Let $g \in T$ be an element of order $p$ such that $C_T(g)$ is abelian and $|C_T(g)|=q^2$ as in Lemma \ref{Conj_Order=p}. Since the class $g^T$ is represented in $T_\a$ and $T_\ell$, we can choose $g_1 \in g^T \cap T_\alpha$ and $g_2 \in g^T \cap T_\ell$.   Then there exists $c \in T$ such that $g_1^c=g_2$. Setting $\beta=\a^c$, we have $g_2 \in T_\beta\cap T_\ell$. Replacing \(\alpha\) by \(\beta\) if necessary, we may   assume  \(g\in T_{\alpha}\cap T_\ell\). Consequently, $g^T \cap T_\a= g^{T_\a}$ and $g^T \cap T_\ell=g^{T_\ell}$. It follows from Lemma~\ref{CT(g) acts transitively} that \(C_T(g)\) acts transitively on \((\mathcal{P}_{g},\mathcal{L}_{g})\). Hence, $|\PA_g|=\frac{|C_T(g)|}{|C_{T_\a}(g)|}=\frac{q^2}{q_0^2}>2$ and $|\LA_g|=\frac{|C_T(g)|}{|C_{T_\ell}(g)|}=\frac{q^2}{q_1^2}>2$. Lemma~\ref{is a subquadrangle} then implies that \(C_T(g)\) is nonabelian,  contradicting the choice of \(g\) with abelian centralizer.  Hence Case 1 cannot occur.
 
 \smallskip
    \textbf{Case 2: $2<r_0\leq r_1$ and $p=3$.} Then $T_\a \cong \PSp_4(q_0)$ and $T_\ell \cong \PSp_4(q_1)$. By Lemma \ref{Conj_Order=2}, each of $T$, $T_\a$ and $T_\ell$ has exactly two conjugacy classes of involutions. Let $g\in T$ be a representative of the conjugacy class $Y_1$ in Lemma \ref{Conj_Order=2}. Since $\Sp_4(q_0)$ and $\Sp_4(q_1)$ both contain $Z(\Sp_4(q))$, we deduce that $|g^T\cap T_\a|=|g^{T_\a}|$ and $|g^T \cap T_\ell|=|g^{T_\ell}|$ by Lemma \ref{Conj_Order=2}(2).  After conjugating in $T$ we may assume $g \in T_\alpha \cap T_\ell$, so that $g^T \cap T_\alpha = g^{T_\alpha}$ and $g^T \cap T_\ell = g^{T_\ell}$. Applying Lemma~\ref{Fix_points_number} we obtain \[
|\mathcal{P}_g| = \frac{|\mathcal{P}| \cdot |g^T \cap T_\alpha|}{|g^T|}
               = \frac{q^2 (q^2-1)^2}{q_0^2 (q_0^2-1)^2} > 2,
\qquad
|\mathcal{L}_g| = \frac{q^2 (q^2-1)^2}{q_1^2 (q_1^2-1)^2} > 2.
\] By Lemma \ref{is a subquadrangle}, $\mathcal{S}_g$ is a subquadrangle of order $(s',t')$. Set $A:=\frac{q(q^2-1)}{q_0(q_0^2-1)}$, $B:=\frac{q(q^2-1)}{q_1(q_1^2-1)}$. Then Lemma \ref{parameters}(1) yields \[
A^2 = (s'+1)(s't'+1),\qquad B^2 = (t'+1)(s't'+1).  
\]
Since $A$ and $B$ are odd,  $s'+1$, $t'+1$, $s't'+1$ are all odd.  Assume first that $r_0<r_1$. We deduce that $\gcd(s'+1,t'+1,s't'+1)=1$.  It follows that $s'+1$, $t'+1$, $s't'+1$ are perfect squares.
 Write $s'+1=a^2$, $t'+1=b^2$ and $s't'+1=c^2$. Then $(\frac{b}{a})_3=(\frac{B}{A})_3=\frac{q_0}{q_1}\geq 3$. So $3 \mid b$. Consequently $t'=b^2-1\equiv 2 \pmod  3$.   If $3 \mid a$, then $s'=a^2-1\equiv 2 \pmod 3$ and thus $c^2=s't'+1\equiv 2\pmod  3$, which is impossible. Therefore $3 \nmid a$, so $s'=a^2-1 \equiv 0\pmod   3$ and $c^2 \equiv 1 \pmod  3$. Then   $3 \nmid ac=A$ contradicting $(A)_3=\frac{q}{q_0}\geq 3$. Thus $r_0 < r_1$ is impossible. Now if $r_0=r_1$, then $s'=t'$ and $A^2=(s'+1)(s'^2+1)$. By the same argument as above, $s'^2+1$ must be a perfect square, say $s'^2+1=y^2$ which forces $s'=0$, a contradiction.

\smallskip
    \textbf{Case 3: \(r_0=2\) and \(r_0<r_1\).} Then \(T_{\alpha}\cong \PSp_4(q_0).2\) and \(T_{\ell}\cong \PSp_4(q_1)\), where \(q=q_0^2=q_1^{r_1}\). Applying Lemma \ref{2|P|^5>|L|^4_2222}(2) gives $|T_{\alpha}|^5<2\,|T_{\ell}|^4\,|T|$,
which gives \[\bigl(q_0^4(q_0^4-1)(q_0^2-1)\bigr)^5
<2\Bigl(q_1^4(q_1^4-1)(q_1^2-1)/{2}\Bigr)^4\cdot
q_0^8(q_0^8-1)(q_0^4-1)/2.\] Simplifying and substituting \(q_0^2=q_1^{r_1}\), we obtain
\begin{equation}\label{Case_3}
2^4\cdot \frac{q_1^{6r_1}(q_1^{2r_1}-1)^3(q_1^{r_1}-1)^5}{q_1^{2r_1}+1}
<\bigl[q_1^4(q_1^4-1)(q_1^2-1)\bigr]^4
<q_1^{40}.
\end{equation}
Since \(x^2+1<2x^2\) and \(2^3(x^2-1)^3(x-1)^5>x^{11}\) for \(x=q_1^{r_1}>3\), inequality \eqref{Case_3} implies $q_1^{15r_1}<q_1^{40}$. Therefore  \(r_1<3\), contradicting the assumption \(2=r_0<r_1\).

\smallskip
    \textbf{Case 4: $r_0=r_1=2$. } Then $T_\a \cong T_\ell \cong \PSp_4(q_0).2$ and $q=q_0^2$. From Table \ref{tab:maximal_subgroups}, we have 
$
|\PA|=|\mathcal{L}|=q_0^4(q_0^4+1)(q_0^2+1)/2=(s+1)(s^2+1).
$
 Proceeding as in the proof of Lemma~\ref{type_d}, we deduce that   $q_0^4 \mid (s^2+1)$  and  so $q_0\equiv 1\pmod{4}$. Then $q_0^4+1\equiv q_0^2+1\equiv 2 \pmod{4}$. Therefore $|\PA|_2=(q_0^4(q_0^4+1)(q_0^2+1)/2)_2=2$. On the other hand,  \(s\) must be odd and so $((s+1)(s^2+1))_2\geq 4$, which contradicts $|\PA|_2=2$. 

Thus all possibilities for type (j) have been ruled out. This completes the proof.
 \end{proof}

\begin{lemma}\label{lam:type_k}
    The subgroup $T_\a$ cannot be of type (k).
\end{lemma}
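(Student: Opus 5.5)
Assume, for a contradiction, that $T_\alpha$ and $T_\ell$ are both of type (k). Then $q=2^f$ is even, $T_\alpha\cong \Sp_4(q_0)$ and $T_\ell\cong\Sp_4(q_1)$ with $q=q_0^{r_0}=q_1^{r_1}$, $r_0,r_1$ prime, and by duality we may assume $r_0\le r_1$. I would follow Case 1 of Lemma~\ref{typr_j}, replacing unipotent elements of order $p\ge5$ by elements of order $4$ and Lemma~\ref{Conj_Order=p} by Lemma~\ref{Order_4_p=2}.

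First take $g\in T$ of order $4$, say in class $A_{41}$. By Lemma~\ref{Order_4_p=2}, $C_T(g)$ is abelian of order $2q^2$, and the same lemma applied to $\Sp_4(q_0)$ and $\Sp_4(q_1)$ shows that each has exactly two classes of elements of order $4$, with abelian centralizers of orders $2q_0^2$ and $2q_1^2$. The key fact needed is that the subfield embedding $\Sp_4(q_0)\le \Sp_4(q)$ sends the two classes of order-$4$ elements of $\Sp_4(q_0)$ into the two distinct $T$-classes, bijectively. The representatives $g_1,g_2$ in Lemma~\ref{Order_4_p=2} can be taken over the prime field when $\zeta$ is chosen suitably, or one can cite the unipotent class fusion result \cite[Proposition 4.1]{gon2017unipotent_class} as in type (j). Given this, $g^T\cap T_\alpha=g^{T_\alpha}$ and $g^T\cap T_\ell=g^{T_\ell}$ are single classes. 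After conjugating as in Case 1 of Lemma~\ref{typr_j}, we may assume $g\in T_\alpha\cap T_\ell$.

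Lemma~\ref{CT(g) acts transitively} then shows that $C_T(g)$ is transitive on $\PA_g$ and on $\LA_g$, with $|\PA_g|=2q^2/(2q_0^2)=q^2/q_0^2\ge 4$ and $|\LA_g|=q^2/q_1^2\ge 4$. Lemma~\ref{is a subquadrangle}(2) forces $C_T(g)$ to be nonabelian, which contradicts Lemma~\ref{Order_4_p=2}.

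The main obstacle is verifying the class fusion, namely that each $T$-class of order-$4$ elements meets $\Sp_4(q_0)$ in exactly one $\Sp_4(q_0)$-class. If this fails, so that both subgroup classes fuse into one $T$-class, there is a fallback. Use Lemma~\ref{Fix_points_number} to compute $|\PA_g|=2q^2/q_0^2$, split into two $C_T(g)$-orbits of equal size. Then Lemma~\ref{lem:P-two-orbit-quadra} or Lemma~\ref{lem:two-orbits-P-and-L-subqua} still shows that $\mc{S}_g$ is a subquadrangle admitting the abelian group $C_T(g)$ with few orbits. A contradiction should then follow from $|\PA_g|=(s'+1)(s't'+1)$ with $|\PA_g|$ and $|\LA_g|$ powers of $2$, because $s't'+1$ is odd and greater than $1$. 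The same parity argument also works directly in the main route and gives a cleaner contradiction than abelianness: $|\PA_g|$ is a power of $2$ that must be divisible by the odd number $s't'+1>1$, which is impossible. A second fallback uses the bound of Lemma~\ref{2|P|^5>|L|^4_2222}(2) to force $r_0=r_1$, as in Case 3 of Lemma~\ref{typr_j}, and then solves $(s+1)(s^2+1)=[T:T_\alpha]$ arithmetically using $2$-parts, as in Lemma~\ref{type_e_f_g_i_n}.
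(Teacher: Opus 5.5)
Your main route is the paper's Case 1. However, the fusion claim it rests on is false when the subfield has index $2$, and you never separate that case. The two $T$-classes of elements of order $4$ are distinguished by the Arf invariant of the quadratic form $v\mapsto B(v,v(g-1))$ induced on $V/\ker(g-1)^2$, where $B$ is the symplectic form. For the representatives in Lemma \ref{Order_4_p=2} this form is $\zeta x^2+xy+y^2$, so the invariant is $\mathrm{Tr}_{\mathbb{F}_q/\mathbb{F}_2}(\zeta)$. For $\zeta\in\mathbb{F}_{q_0}$ and $q=q_0^r$ this equals $r\,\mathrm{Tr}_{\mathbb{F}_{q_0}/\mathbb{F}_2}(\zeta)$. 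So for odd $r$ you get the bijection you want, which is why the paper uses \cite[Proposition 4.1]{gon2017unipotent_class} only when $r_0>2$. For $r=2$, though, both classes of $\Sp_4(q_0)$ fuse into a single $T$-class, and the other $T$-class misses $\Sp_4(q_0)$ entirely. Your remark that $\zeta$ can be taken in the prime field also fails whenever $f$ is even.

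Neither fallback repairs this as written.
\begin{itemize}
\item The parity argument is backwards. If $(s'+1)(s't'+1)$ and $(t'+1)(s't'+1)$ are powers of $2$ with $s',t'\ge 1$, then $s'$ and $t'$ are odd, so $s't'+1$ is even. For example, $(s',t')=(2^k-1,1)$ gives $|\PA_g|=2^{2k}$ and $|\LA_g|=2^{k+1}$; compare the finer analysis needed in Lemma \ref{lem:T_a-f-i-and-T_l-k}. The same objection applies to your ``cleaner'' shortcut in the main route.
\item When $r_0=r_1=2$, all four $C_T(g)$-orbits on $\PA_g$ and $\LA_g$ have the same size $q$. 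So neither Lemma \ref{lem:P-two-orbit-quadra} nor Lemma \ref{lem:two-orbits-P-and-L-subqua} even applies.
\item Your second fallback is the right tool but overstated. Lemma \ref{2|P|^5>|L|^4_2222}(2) does not force $r_0=r_1$ in general. For $(r_0,r_1)=(3,5)$ one has roughly $|\PA|\approx q^{20/3}$ and $|\LA|\approx q^{8}$, which satisfies $q^{16/3}<q^{8}<q^{25/3}$. The bound forces $r_1=2$ only once $r_0=2$.
\end{itemize}

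The correct assembly, which is the paper's proof, has three parts. Use your order-$4$ argument for $r_0\ge 3$, where $r_0,r_1$ are both odd and the fusion holds. Use the inequality $|T_\alpha|^5<2|T_\ell|^4|T|$ to exclude $2=r_0<r_1$. Use the $2$-part computation $(s+1)_2=q_0^4/2$ to exclude $r_0=r_1=2$.
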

\begin{proof}
The argument is analogous to that of type (j). Suppose that $T_\a \cong \PSp_4(q_0)$ and $T_\ell \cong \PSp_4(q_1)$, where $q=2^f\geq 4$, $q=q_0^{r_0}=q_1^{r_1}$ and $r_0\leq r_1$ are primes. 

 \smallskip
    \textbf{Case 1: $r_0>2$.} Then $r_0$ and $r_1$ are odd primes.  Let \(g \in T\) be an element of order \(4\).  By Lemma \ref{Order_4_p=2}, $C_T(g)$ is abelian and $|C_T(g)|=2q^2$. It follows from \cite[Proposition 4.1 (1)]{gon2017unipotent_class} that all conjugacy classes of unipotent elements in $T$ are represented in $T_\a$ and $T_\ell$.  Hence, for every $T$-conjugacy class $C$ of elements of order $4$, the intersections $C\cap T_\alpha$ and $C\cap T_\ell$ are single conjugacy classes in $T_\alpha$ and $T_\ell$, respectively. After replacing \(T_\alpha\) by a suitable conjugate, we may assume \(g \in T_\alpha \cap T_\ell\), so that  
\(g^T \cap T_\alpha = g^{T_\alpha}\) and \(g^T \cap T_\ell = g^{T_\ell}\). Hence $C_T(g)$ acts transitively on $\mathcal{S}_g$ by Lemma \ref{CT(g) acts transitively}. Consequently, $|\PA_g|=\frac{|C_T(g)|}{|C_{T_\a}(g)|}=\frac{q^2}{q_0^2}>2$ and $|\LA_g|=\frac{q^2}{q_1^2}>2$. Lemma \ref{is a subquadrangle}(2) then forces \(C_T(g)\) to be  nonabelian, contradicting the fact that $C_T(g)$ is abelian.

 \smallskip
    \textbf{Case 2: $r_0=2$ and $r_0<r_1$.} Then $q=q_0^2=q_1^{r_1}$. The inequality in Lemma \ref{2|P|^5>|L|^4_2222}(2) yields $|T_{\alpha}|^5<2\,|T_{\ell}|^4\,|T|$, which gives
\begin{equation}\label{type_k_eq}
   \frac{q_1^{6r_1}(q_1^{2r_1}-1)^3(q_1^{r_1}-1)^5}{q_1^{2r_1}+1}<2 \bigl(q_1^4(q_1^4-1)(q_1^2-1)\bigr)^4<2q_1^{40}. 
\end{equation}
As $x^2+1<2x^2$ and $(x^2-1)^3(x-1)^5>x^{11}/2$ for $x=q_1^{r_1}\geq8$, the inequality (\ref{type_k_eq}) can be reduced to $q_1^{15r_1}<8q_1^{40}\leq q_1^{43}$. Therefore $r_1<3$, contradicting the assumption $2<r_1$.

 \smallskip
  \textbf{Case 3: $r_0=r_1=2$.} From Table \ref{tab:maximal_subgroups}, we have $|\PA|=|\mathcal{L}|=q_0^4(q_0^4+1)(q_0^2+1)=(s+1)(s^2+1)$. Since $q_0$ is even, $s$ is odd and $(s^2+1)_2=2$. Hence $(s+1)_2=q_0^4/2$. Consequently, for $q_0\geq 4$, one checks that \[(s+1)(s^2+1)\geq \frac{q_0^4}{2}\left(\frac{q_0^8}{4}-q_0^4+2\right)>q_0^4(q_0^4+1)(q_0^2+1)=|\PA|,\] a contradiction. If $q_0=2$, then $|\PA|=(s+1)(s^2+1)=1360$, which has no integer solution for $s$. 

  All possibilities have been ruled out, so \(T_\alpha\) cannot be of type (k). 
\end{proof}

\begin{lemma}\label{p_1_mod 4}
  Suppose $T_\a$ has type (l), (m), (o) or (r). Then $p\equiv 1\pmod{4}$.  
\end{lemma}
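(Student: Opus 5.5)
The plan is to copy the arithmetic template of Lemma~\ref{type_d}: compare the $p$-part of $|\PA|=(s+1)(s^2+1)$ with the $p$-part of $[T:T_\alpha]$. In this subsection $T_\alpha$ and $T_\ell$ have the same type, so $|\PA|=|\LA|$, hence $s=t$ and $|\PA|=(s+1)(s^2+1)$. For each of the types (l), (m), (o), (r), Table~\ref{tab:maximal_subgroups} gives $q$ odd. In types (l), (m), (o) we have $q=p$ and $|T_\alpha|$ equal to $1920$, $960$ or $360$, while in type (r) we have $p\ge 5$. Thus $|\PA|$ has the form $q^4(q^4-1)(q^2-1)/N$ with $N\in\{3840,1920,720\}$, or $q^3(q^4-1)$ in type (r).

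\textbf{Step 1: the whole $p$-part lands in one factor.} Since $\gcd(s+1,s^2+1)$ divides $2$ and $p$ is odd, $|\PA|_p$ divides either $s+1$ or $s^2+1$. We need $p\nmid N$. In type (r) there is no divisor, and $|\PA|_p=q^3$. In types (l), (m), (o), $p$ could divide $N=2^a3^b5^c$ only if $p\in\{3,5\}$. The congruence conditions exclude this: $p\equiv\pm1,\pm3\pmod 8$ together with the requirement that $T_\alpha$ be maximal, and $p\equiv 5,7\pmod{12}$ for (o). This needs a careful check for (m) with $p=3$ or $5$; if some small $p$ survives, it is handled by a direct numerical computation. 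Otherwise $|\PA|_p=p^4$.

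\textbf{Step 2: rule out $|\PA|_p\mid s+1$.} In that case $s+1\ge q^4$ (or $\ge q^3$ in type (r)), so
\[
(s+1)(s^2+1)\ge q^4\bigl((q^4-1)^2+1\bigr)>q^{10}/N.
\]
This exceeds $|\PA|$, which is smaller than $q^{10}$. In type (r), $q^3\bigl((q^3-1)^2+1\bigr)>q^3(q^4-1)$ for $q\ge5$. So we get a contradiction, and hence $|\PA|_p$ divides $s^2+1$.

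\textbf{Step 3: conclude.} From Step 2, $p\mid s^2+1$, so $-1$ is a square modulo the odd prime $p$, and therefore $p\equiv1\pmod 4$. The only real obstacle is Step 1, namely making sure $p\nmid N$, where the small primes $3$ and $5$ must be excluded by the table's conditions or checked by hand.
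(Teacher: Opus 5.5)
Your argument is the same as the paper's: the whole $p$-part of $|\mathcal{P}|=(s+1)(s^2+1)$ must lie in $s^2+1$, because putting it in $s+1$ makes $(s+1)(s^2+1)$ far too large, and then $-1$ is a square modulo $p$. Your Steps 2 and 3 are fine as written.

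One correction to Step~1. The condition $p\equiv 5,7\pmod{12}$ in type (o) does \emph{not} exclude $p=5$. Type (m) also genuinely occurs for both $p=3$ and $p=5$; the paper itself uses type (m) with $q=3$, where $|\mathcal{L}|=27$. So $p\mid N$ really happens, and $|\mathcal{P}|_p=p^3$ rather than $p^4$ in these cases.

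No extra work is needed, though. The dichotomy in Step~1 holds for whatever $|\mathcal{P}|_p$ is. For $p=5$ the conclusion $p\equiv 1\pmod 4$ is automatic. For $p=3$ you have $|\mathcal{P}|=27$, which is not of the form $(s+1)(s^2+1)$. Equivalently, rerun Steps 2--3 with $27$ in place of $q^4$: $27\mid s+1$ is too large, and $3\mid s^2+1$ is impossible.

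The paper's own proof glosses over exactly this point by asserting that the cofactor $M$ is coprime to $p$. That assertion fails in these small cases, so you were right to flag it.
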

\begin{proof}
For each of these types, \(q = p^f\) is odd and the order \(|\mathcal{P}|\) has the form \(q^a \cdot M\), where \(a = 4\) for types (l), (m), (o) and \(a = 3\) for type (r), while \(M\) is an integer coprime to \(p\).  Since \(\gcd(s+1,s^2+1)\) divides \(2\) and \(p\) is odd,  \(q^a\) must divide either \(s+1\) or \(s^2+1\).
If \(q^a \mid (s+1)\), then \(s+1 \ge q^a\) and consequently
$(s+1)(s^2+1) \ge q^a\bigl((q^a-1)^2+1\bigr)$. For each type, one verifies directly that for all $q$, $(q^a-1)^2+1>M$. For example, in type (l) we have \(a=4\) and \(M = (q^4-1)(q^2-1)/3840\); the inequality reduces to
$q^8 - 2q^4 + 2 >(q^4-1)(q^2-1)/3840$, which clearly holds for all \(q \ge 3\). The other types are checked similarly. Thus the case \(q^a \mid (s+1)\) leads to a contradiction.
Therefore \(q^a\) must divide \(s^2+1\). In particular, \(s^2 \equiv -1 \pmod{p}\), which forces \(p \equiv 1 \pmod{4}\).
\end{proof}

\begin{lemma}
    The subgroup $T_\a$ cannot be of type (l).
\end{lemma}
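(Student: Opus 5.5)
The plan is to avoid the arithmetic of $|\PA|=(s+1)(s^2+1)$ and instead use Lemma~\ref{lem:no_2_3_elemen} with an element of order $3$.

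\textbf{Setup.} Suppose, for a contradiction, that $T_\alpha$ and $T_\ell$ are both of type (l). Then $q=p\equiv\pm1\pmod 8$ and $|\PA|=|\LA|=q^4(q^4-1)(q^2-1)/3840$, so $s=t$. By Lemma~\ref{p_1_mod 4} we have $p\equiv1\pmod4$, hence $p\equiv 1\pmod 8$ and in particular $p\ge 17$. So Lemma~\ref{Conj_order=3} applies, and $T$ has exactly two conjugacy classes $C_1,C_2$ of elements of order $3$.

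\textbf{Key group-theoretic step.} I will show that $T_\alpha$ meets at most one of $C_1,C_2$. Since $|T_\alpha|=|2_-^{1+4}.\mathrm{S}_5|/2=1920$, its Sylow $3$-subgroup has order $3$. Hence every element of order $3$ in $T_\alpha$ lies in a conjugate of one fixed subgroup $\langle x\rangle$. So the order-$3$ elements of $T_\alpha$ are $T_\alpha$-conjugate to $x$ or to $x^{-1}$. In $\mathrm{S}_5$, and already in $2_-^{1+4}.\mathrm{A}_5$, an element of order $3$ is conjugate to its inverse; I expect this to lift to $T_\alpha$, for instance via the normalizer of a Sylow $3$-subgroup. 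Even if it does not, $x$ and $x^{-1}$ have the same eigenvalue multiset $\{\omega,\omega,\omega^{-1},\omega^{-1}\}$ or $\{1,1,\omega,\omega^{-1}\}$ on the natural module. Since the two $T$-classes of order-$3$ elements are distinguished by these eigenvalue multisets (dimension of the fixed space, cf.\ \cite[Proposition 6.3]{liebeck1996classical}), $x$ and $x^{-1}$ lie in the same $T$-class. Thus $T_\alpha$ meets exactly one class, say $C_1$, and the same holds for $T_\ell$.

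\textbf{Matching the two stabilizers.} Next I show that $T_\ell$ meets the same class $C_1$. If $T_\ell$ is $T$-conjugate to $T_\alpha$ this is immediate. Otherwise, by \cite[Table 8.12]{bray2013maximal} there are two $T$-classes of subgroups $\hat~2_-^{1+4}.\mathrm{S}_5$, interchanged by the diagonal automorphism from $\mathrm{PGSp}_4(q)$. That automorphism preserves eigenvalue multisets, hence fixes each of $C_1,C_2$. So $T_\ell$, being the image of a conjugate of $T_\alpha$, also meets only $C_1$.

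\textbf{Conclusion.} Take $g\in C_2$. Then $g^T\cap T_\alpha=g^T\cap T_\ell=\varnothing$, and $g$ has order $3$, contradicting Lemma~\ref{lem:no_2_3_elemen} since $\mathcal S$ has order $s$ and $T$ is transitive on points and lines.

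\textbf{Main obstacle.} The delicate step is the claim that the two $T$-classes of elements of order $3$ are separated by an invariant preserved under conjugation inside $T_\alpha$ and under the diagonal automorphism, so that a single Sylow $3$-subgroup of $T_\alpha$ only reaches one class. I would verify this with explicit eigenvalue data from \cite{liebeck1996classical}. As a check, $\hat~2_-^{1+4}.\mathrm{S}_5$ acts on the natural $4$-space through the faithful representation of $2_-^{1+4}$. The restriction to an element of order $3$ of $\mathrm{Sp}_4(p)$ should then have a $2$-dimensional fixed space, which identifies the class concretely. If this fails, the fallback is the arithmetic route. Write $s^2+1=q^4k$, so that $(s+1)k=(q^4-1)(q^2-1)/3840$. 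Then derive size bounds on $k$ as in the type (h) proof and contradict a congruence modulo $q^4$ or a $2$-adic condition.
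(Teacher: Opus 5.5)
Your argument is correct, but it takes a different and much shorter route than the paper.

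\textbf{The paper's route.} The paper works with the involution class $Y_1$, which does meet $T_\alpha$ and $T_\ell$. It uses Magma to count the $51$ involutions of $2_-^{1+4}.\mathrm{S}_5$ and notes that $Y_1\cap T_\alpha$ splits into $T_\alpha$-classes of sizes $5$ and $20$. It then applies Lemma~\ref{lem:two-orbits-P-and-L-subqua} to make $\mathcal S_g$ a subquadrangle of order $s'$. Finally it finds the integral points of an elliptic curve by computer. That fixed-substructure machinery is the template for stabilizers that meet every relevant class.

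\textbf{Your route.} You use the order-$3$ class that neither stabilizer meets and conclude at once from Lemma~\ref{lem:no_2_3_elemen}. This is exactly how the paper treats type (m), so your argument shows that (l) and (m) can be handled together, with no computer algebra. The two points you flag both hold:
\begin{itemize}
\item $x\sim x^{-1}$ holds in $T_\alpha$ by a Frattini argument. Pick $n\in T_\alpha$ inducing the inversion of the $3$-cycle $\bar x$ in $\mathrm{S}_5$. Then correct $x^n$ by an element of $O_2(T_\alpha)$, using conjugacy of Sylow $3$-subgroups in $\langle x\rangle O_2(T_\alpha)$.
\item The two $T$-classes of order-$3$ elements are separated by the eigenvalue multiset of the order-$3$ lift. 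This follows from Lemma~\ref{Conj_order=3} once you note that both patterns occur, for example $(I_2,y)$ and $(y,y)$ in $\mathrm{Sp}_2(q)\times\mathrm{Sp}_2(q)$ with $y$ of order $3$.
\end{itemize}
Your matching of $T_\ell$ with $T_\alpha$ through $\mathrm{PGSp}_4(p)$ is also correct, since conjugation by a similitude preserves characteristic polynomials. That step is genuinely needed here, because for $p\equiv\pm1\pmod 8$ there are two $T$-classes of such subgroups.

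\textbf{One slip in your ``check''.} An element $x$ of order $3$ in $2_-^{1+4}.\mathrm{A}_5$ acts as a $3$-cycle on the $\mathrm{O}_4^-(2)$-module $E/Z(E)$, so it fixes a $2$-dimensional subspace there. Hence $|\chi(x)|^2=4$. On the natural $4$-space its eigenvalues are therefore $\omega,\omega,\omega^{-1},\omega^{-1}$: trace $-2$ and no fixed vectors, not $1,1,\omega,\omega^{-1}$. You can see this from $y_1\otimes I_2$ acting on $\mathbb C^2\otimes\mathbb C^2$ for $Q_8\circ D_8$. Your proof never uses which class is hit, so this does not matter. The arithmetic fallback is unnecessary, and $p\ge5$ already follows from $p\equiv\pm1\pmod 8$ without reducing to $p\equiv1\pmod 4$.
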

\begin{proof}
    Suppose, for a contradiction, that $T_\a$ is of type (l). From Table \ref{tab:maximal_subgroups}, $T_\a \cong T_\ell \cong \hat{}~2_-^{1+4}.\mathrm{S}_5 \cong   2^4:\mathrm{S}_5$, and \[
|\mathcal{P}| = |\mathcal{L}|  = (s+1)(s^2+1)= \frac{q^4(q^4-1)(q^2-1)}{3840},
\]
where $q = p \equiv \pm 1 \pmod{8}$. It follows from Lemma \ref{p_1_mod 4} that $q=p\equiv 1 \pmod{8}$.
    
    A Magma computation~\cite{MAGMAbosma1994handbook} shows that the group $2^4:\mathrm{S}_5$ has five conjugacy classes of involutions, with class sizes $5,10,20,60,$ and $60$. On the other hand, by Lemma \ref{Conj_Order=2}, $T$ has exactly two conjugacy classes of involutions.  Let $g \in T$ be a representative of the conjugacy class $Y_1$ in Lemma \ref{Conj_Order=2}, so that $|g^T|=q^2(q^2+1)/2$. 
    A Magma computation \cite{MAGMAbosma1994handbook} shows that  the group $2^{1+4}_-.\mathrm{S}_5$ contains exactly $51$ involutions. Since $T_\a$ and $T_\ell$ contain $Z(\Sp_4(q))$,  Lemma \ref{Conj_Order=2}(2) yields $|g^T \cap T_\a|=25$ and $|g^T \cap T_\ell|=25$. Equivalently, the $T$-class $Y_1$ splits in $T_\a$ and in $T_\ell$ into two involution classes of sizes $5$ and $20$. After conjugating in \(T\) if necessary, we may assume that \(g \in T_\alpha \cap T_\ell\) and that \(|g^{T_\alpha}| = 20\). Then $|C_{T_\a}(g)|=|T_\a|/|g^{T_{\a}}|=96$ and Lemma \ref{Fix_points_number} gives \[
|\mathcal{P}_g| = |\mathcal{L}_g| = \frac{|\mathcal{P}| \cdot |g^T \cap T_\alpha|}{|g^T|}
= \frac{5 q^2 (q^2-1)^2}{384} > 2.
\]
 
    Since $|g^T \cap T_\a|\neq|g^{T_\a}|$, Lemma \ref{CT(g) acts transitively} implies that $C_T(g)$ acts intransitively on $\PA_g$ and $\LA_g$. We next show that $C_T(g)$ has exactly two orbits on $\PA_g$ (and similarly on $\LA_g$). The orbit $\a^{C_T(g)}$ has length $|C_T(g)|/|C_{T_{\a}}(g)|=q^2(q^2-1)^2/96$. For any other point $\beta \in \mathcal{P}_g$, write $\beta = \alpha^h$ with $h \in T$. Then $g \in T_{\beta}$ lies in an involution class of size $5$ or $20$ inside $T_{\beta} \cong T_{\a}$, so $|\beta^{C_T(g)}|$ is either $\tfrac{q^2(q^2-1)^2}{384}>2$ or $\tfrac{q^2(q^2-1)^2}{96}>2$. 
Let \(x\) and \(y\) be the numbers of \(C_T(g)\)-orbits on \(\mathcal{P}_g\) of lengths \(\frac{q^2(q^2-1)^2}{96}\) and \(\frac{q^2(q^2-1)^2}{384}\), respectively. Then \[|\PA_g|=\frac{5q^2(q^2-1)^2}{384}=x\cdot \frac{q^2(q^2-1)^2}{96}+y \cdot \frac{q^2(q^2-1)^2}{384},\] which simplifies to $4x + y = 5$. Since $x>0$, we obtain $x=y=1$. Hence $C_T(g)$ has exactly two orbits on $\PA_g$, say $\PA_1$, $\PA_2$, of sizes $\tfrac{q^2(q^2-1)^2}{96}$ and $\tfrac{q^2(q^2-1)^2}{384}$, respectively. Applying the same counting argument to $\mathcal{L}_g$  shows that $C_T(g)$ likewise has two orbits on $\mathcal{L}_g$, which we denote by $\mathcal{L}_1$ and $\mathcal{L}_2$, with $|\mathcal{L}_1| = |\mathcal{P}_1|$ and $|\mathcal{L}_2| = |\mathcal{P}_2|$. As $|\LA_1|>|\PA_2|$ and $|\PA_1|>|\LA_2|$, it follows from Lemma \ref{lem:two-orbits-P-and-L-subqua} that $\mc{S}_g$ is a subquadrangle of order $s'$. Therefore, 
    \begin{equation}\label{type_l_Pg_number}
        |\PA_g|=\frac{5q^2(q^2-1)^2}{384}=(s'+1)({s'}^2+1)
    \end{equation}


   Set $X = 30s'+10$ and $Y = 75q(q^2-1)/4$. Substituting into (\ref{type_l_Pg_number}) yields the equation
     \begin{equation}\label{typq_l_eq}
         Y^2=X^3+600X+20000
     \end{equation}
     Every positive integer solution of (\ref{type_l_Pg_number}) gives a positive integer solution of (\ref{typq_l_eq}). The cubic in \eqref{typq_l_eq} is nonsingular, since its Weierstrass discriminant is $\Delta=-186624000000\neq 0$. Thus, \eqref{typq_l_eq} defines an elliptic curve. Using Magma \cite{MAGMAbosma1994handbook}  with the command \texttt{E := EllipticCurve([0, 0, 0, 600, 20000])}, we find that the positive integral solutions of \eqref{typq_l_eq} are $(X,Y)=(20,200),(25,225),(7180,608400)$. Among these, only $(X,Y)=(7180,608400)$ satisfies $30 \mid (X-10)$.  However, in this case, $75q(q^2-1)/4 = 608400$ has no integer solution  for $q$, a contradiction. This completes the proof.
   \end{proof}

\begin{lemma}
    The subgroup $T_\a$ cannot be of type (m),(o) or (r).
\end{lemma}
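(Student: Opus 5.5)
Suppose, for a contradiction, that $T_\alpha$ and $T_\ell$ are both of type (m), (o) or (r). In each case $s=t$ and
\[
|\mathcal{P}|=|\mathcal{L}|=(s+1)(s^2+1),
\]
and Lemma~\ref{p_1_mod 4} gives $p\equiv 1\pmod 4$ together with $q^a\mid s^2+1$, where $a=4$ for (m), (o) and $a=3$ for (r). I will first try a pure $2$-adic argument, as in Lemma~\ref{type_d}. Since $p\equiv1\pmod4$ forces $s$ odd, we have $(s^2+1)_2=2$, so $((s+1)(s^2+1))_2\ge4$ and in fact equals $2(s+1)_2$.

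For type (r), the index is $q^3(q^4-1)$. We have $(q^4-1)_2=(q-1)_2\cdot 4$, so $(s+1)_2=2(q-1)_2$. This alone gives no contradiction, so I combine it with size estimates. Write $s^2+1=q^3k$. Then $(s+1)k=q^4-1$, and comparing sizes gives $s\approx q^{7/3}$ and $k\approx q^{5/3}$. Substituting $s+1=(q^4-1)/k$ into $s^2+1=q^3k$ gives
\[
(q^4-1-k)^2+k^2=q^3k^3 .
\]
Reducing modulo $q^3$ gives $2k^2+2k+1\equiv0\pmod{q^3}$. This is not bounded as nicely as in type (h), since $k\sim q^{5/3}$ makes $2k^2\sim q^{10/3}$, so the quotient $(2k^2+2k+1)/q^3$ is a small integer of order $q^{1/3}$. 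If a clean Diophantine contradiction does not come out, I fall back on a fixed-substructure argument with the element $g\in Y_1$, as for type (l).

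Note that $T_\alpha$ and $T_\ell$ are isomorphic, so the number of involutions of $T_\alpha$ in $Y_1$ is computable:
\begin{itemize}
\item in type (m), $\hat{}\,2^{1+4}_-.\mathrm{A}_5$, via Lemma~\ref{Conj_Order=2}(2) and a Magma count;
\item in type (o), $\mathrm{A}_6$, and type (r), $\PSL_2(q)$, by determining which $T$-class the involutions of the subgroup fall into (for $\PSL_2(q)$ the image of $\mathrm{SL}_2(q)$ in the irreducible $4$-dimensional representation, $p\ge5$).
\end{itemize}
Lemma~\ref{Fix_points_number} then gives $|\mathcal{P}_g|=|\mathcal{L}_g|$ explicitly. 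If $g^T\cap T_\alpha$ is a single $T_\alpha$-class, Lemmas~\ref{CT(g) acts transitively} and~\ref{is a subquadrangle} make $\mathcal{S}_g$ a subquadrangle of order $s'$. If it splits into two classes, as in type (l), the orbit-counting argument with Lemma~\ref{lem:two-orbits-P-and-L-subqua} gives the same conclusion. This yields an equation $|\mathcal{P}_g|=(s'+1)(s'^2+1)$ with a polynomial in $q$ on the left. I would transform it into an elliptic curve and find its integral points with Magma, or bound it by $2$-adic and size arguments. Before going to the element argument I would also try Lemma~\ref{lem:no_2_3_elemen}: if some class of elements of order $2$ or $3$ misses $T_\alpha$, an immediate contradiction follows. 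For $\mathrm{A}_6$ and $\PSL_2(q)$ every involution lies in one $T$-class, so the other $T$-class of involutions, or one of the two classes of order-$3$ elements in Lemma~\ref{Conj_order=3}, may well be absent from $T_\alpha$.

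For types (m) and (o), I expect arithmetic to suffice. In type (m), $q=p\equiv5\pmod8$, since $p\equiv\pm3\pmod8$ and $p\equiv1\pmod4$, and $|\mathcal{P}|=q^4(q^4-1)(q^2-1)/1920$. Here $(q^4-1)_2=8$ and $(q^2-1)_2=8$, so $|\mathcal{P}|_2=64/128<1$, which is impossible. Hence the $2$-part must be computed carefully, and type (m) with $p\equiv 5\pmod8$ may simply be excluded by integrality. If not, the expected obstruction is the $2$-adic valuation: writing $s^2+1=q^4k$ forces $(s+1)_2=|\mathcal{P}|_2/2$, which is very small, while $q^4\mid s^2+1$ makes $s$ large. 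Type (o) is analogous, with $q=p\equiv5\pmod{12}$.

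The main obstacle will be type (r), where neither a quick parity argument nor a bounded-quotient trick obviously works. I expect to need Lemma~\ref{lem:no_2_3_elemen}, applied to the $T$-class of involutions or of order-$3$ elements that does not meet $\PSL_2(q)$.
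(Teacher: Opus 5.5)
There is a genuine gap. The arguments you actually carry out do not close. The argument that does work is mentioned only tentatively, and not at all in the form needed for type (m).

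\textbf{The arithmetic route fails.} Your opening claim that $p\equiv1\pmod4$ forces $s$ odd is false: $q^a\mid s^2+1$ says nothing about the parity of $s$, which is dictated by the parity of $|\mathcal{P}|$.
\begin{itemize}
\item In type (m), $q=p\equiv5\pmod8$ gives $(q-1)_2=4$ and $(q+1)_2=(q^2+1)_2=2$. Hence $(q^4-1)_2=16$ (not $8$), $(q^2-1)_2=8$, and $|\mathcal{P}|_2=2^7/2^7=1$. So $|\mathcal{P}|$ is odd, $s$ is even, and there is no $2$-adic obstruction. Nor can ``integrality'' fail, since $|\mathcal{P}|$ is an index.
\item In type (o), $|\mathcal{P}|_2=(q-1)_2^2/2\ge 8$ and $(s+1)_2=(q-1)_2^2/4$, which is again consistent.
\end{itemize}

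\textbf{The fixed-substructure fallback collapses for (o) and (r).} The preimages $2.\mathrm{A}_6\cong\mathrm{SL}_2(9)$ and $\mathrm{SL}_2(q)$ of $T_\alpha$ and $T_\ell$ in $\Sp_4(q)$ contain no involution other than $-\mathrm{I}_4$. By Lemma~\ref{Conj_Order=2}(2), $Y_1\cap T_\alpha=Y_1\cap T_\ell=\varnothing$, so for $g\in Y_1$ we get $\mathcal{P}_g=\mathcal{L}_g=\varnothing$. There is no subquadrangle and no elliptic curve to study.

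\textbf{What works.} That emptiness is exactly the contradiction, and it is the paper's entire proof. Since $|\mathcal{P}|=|\mathcal{L}|$ forces $s=t$, Lemma~\ref{lem:no_2_3_elemen} applies at once to types (o) and (r). The paper phrases it as: $\mathrm{A}_6$ and $\PSL_2(q)$ have a single class of involutions, whereas $T$ has two. So type (r) is not the main obstacle; it is as immediate as (o).

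For type (m), involutions cannot be used, because $T_\alpha$ meets both $Y_1$ and $Y_2$. The non-central involutions of $2^{1+4}_-$ project into $Y_1$, and its elements of order $4$, which square to $-\mathrm{I}_4$, project into $Y_2$. What is needed, and absent from your proposal, is the order-$3$ version:
\begin{itemize}
\item $2^4{:}\mathrm{A}_5$ has a single conjugacy class of elements of order $3$;
\item $T$ has two such classes, by Lemma~\ref{Conj_order=3}, which applies since $p\ge5$ by Lemma~\ref{p_1_mod 4}.
\end{itemize}
Since $T$ has a unique conjugacy class of such subgroups, one may take $T_\alpha=T_\ell$. Mere isomorphism of $T_\alpha$ and $T_\ell$ would not guarantee that the same $T$-class is the one that is missed. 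Hence some class of order-$3$ elements misses both $T_\alpha$ and $T_\ell$, and Lemma~\ref{lem:no_2_3_elemen} finishes the proof.
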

\begin{proof}
Suppose, for a contradiction, that $T_\a$ is isomorphic to $2^4:\mathrm{A}_5$, $\mathrm{A}_6$ or $\PSL_2(q)$ from Table \ref{tab:maximal_subgroups}. By Lemma \ref{p_1_mod 4}, we deduce that $p>3$.
It follows from \cite[Table 8.12 and Table 8.13]{bray2013maximal} that $T$ has a unique conjugacy class of subgroups isomorphic to $2^4:\mathrm{A}_5$, $\mathrm{A}_6$ and $\PSL_2(q)$. Hence, after replacing $T_\alpha$ by a suitable conjugate, we may assume $T_\alpha = T_\ell$. By Magma \cite{MAGMAbosma1994handbook}, the group $2^4:\mathrm{A}_5$ has a unique conjugacy class of elements of order $3$. 
However, by Lemma \ref{Conj_order=3}, $T$ has exactly two conjugacy classes of elements of order $3$. 
Therefore there exists an element \(g\in T\) of order \(3\) such that \(g^T\cap T_{\alpha}=g^T\cap T_\ell=\varnothing\), contradicting Lemma \ref{lem:no_2_3_elemen}. Similarly, Magma computations \cite{MAGMAbosma1994handbook} and \cite[Lemma~2.9]{feng2023finitePSL} show that $\mathrm{A}_6$ and \(\PSL_2(q)\) each have a single conjugacy class of involutions, whereas \(T\) has two such classes by Lemma \ref{Conj_Order=2}. 
Thus there exists an  involution \(g\in T\)  such that \(g^T\cap T_{\alpha}=g^T\cap T_\ell=\varnothing\), again contradicting Lemma \ref{lem:no_2_3_elemen}. Thus none of the types (m), (o) or (r) can occur.
\end{proof}

\begin{lemma}
    The subgroup $T_\a$ cannot be of type (p).
\end{lemma}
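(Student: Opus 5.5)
Suppose, for a contradiction, that $T_\alpha$ and $T_\ell$ are both of type (p). Then $T_\alpha\cong T_\ell\cong \mathrm{S}_6$, $q=p\equiv 1,11\pmod{12}$, and
\[
|\mathcal{P}|=|\mathcal{L}|=(s+1)(s^2+1)=\frac{q^4(q^4-1)(q^2-1)}{1440}.
\]
Type (p) is not among the types covered by Lemma~\ref{p_1_mod 4}, so I first rerun its argument. Since $p$ is odd and $\gcd(s+1,s^2+1)\mid 2$, the factor $q^4$ divides $s+1$ or $s^2+1$. If $q^4\mid s+1$, then $(s+1)(s^2+1)\ge q^4\bigl((q^4-1)^2+1\bigr)$, which exceeds the displayed value, a contradiction. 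Hence $q^4\mid s^2+1$, which forces $p\equiv 1\pmod 4$. Together with the congruence condition, this gives $q\equiv 1\pmod{12}$ and in particular $p\ge 13$.

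The main step is a fixed-point argument in the style of Lemma~\ref{lem:no_2_3_elemen}. By \cite[Table 8.12, 8.13]{bray2013maximal} I expect a unique $T$-class of subgroups $\mathrm{S}_6$, so we may assume $T_\alpha=T_\ell$. I would then look for an element of order $2$ or $3$ whose $T$-class misses $T_\alpha$. The elements of order $3$ look most promising. $\mathrm{S}_6$ has two classes of elements of order $3$, of types $3$-cycles and $(3,3)$, and $T$ has two classes by Lemma~\ref{Conj_order=3}. So I must check, from the embedding $\hat{}\,2.\mathrm{S}_6\le \Sp_4(q)$ and character values (e.g.\ traces $1$ and $-2$ on the $4$-dimensional representation of $2.\mathrm{A}_6$), whether both $T$-classes are actually represented. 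In $\Sp_4(q)$ the two classes of order-$3$ elements have eigenvalue patterns $(\omega,\omega^{-1},1,1)$ and $(\omega,\omega,\omega^{-1},\omega^{-1})$. The $4$-dimensional representation of $2.\mathrm{A}_6$ sends a $3$-cycle and a $(3,3)$-element to elements of traces $1$ and $-2$, which correspond to the two different patterns. So both classes likely do meet $T_\alpha$, and the order-$3$ approach probably fails. The involutions behave the same way: $\mathrm{S}_6$ has three involution classes, and both $T$-classes are hit. So Lemma~\ref{lem:no_2_3_elemen} alone may not suffice.

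The fallback, which I expect to be the real work, mirrors the type (l) argument. Take $g$ in the class $Y_1$. Count the involutions $m$ of $2.\mathrm{S}_6$ (the relevant $\hat{}$-preimage), and apply Lemma~\ref{Conj_Order=2}(2) to get $|g^T\cap T_\alpha|=(m-1)/2$. By Magma, this set is a union of $\mathrm{S}_6$-classes of sizes among $15,15,45$. Lemma~\ref{Fix_points_number} then gives
\[
|\mathcal{P}_g|=|\mathcal{L}_g|=\frac{|\mathcal{P}|\,|g^T\cap T_\alpha|}{q^2(q^2+1)/2}=\frac{c\,q^2(q^2-1)^2}{720}
\]
for an explicit $c$. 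Counting the $C_T(g)$-orbits as in the type (l) proof, I would show there are exactly two orbits on $\mathcal{P}_g$ and two on $\mathcal{L}_g$, with sizes matching crosswise; if the two classes have equal size, Lemma~\ref{lem:P-two-orbit-quadra} applies instead. Lemma~\ref{lem:two-orbits-P-and-L-subqua} then makes $\mathcal{S}_g$ a subquadrangle of order $s'$, so
\[
(s'+1)(s'^2+1)=\frac{c\,q^2(q^2-1)^2}{720}.
\]
Writing this as $Y^2=X^3+aX+b$ with $X$ linear in $s'$ and $Y$ proportional to $q(q^2-1)$, I would compute the integral points with Magma and check that none yields a prime $q\equiv 1\pmod{12}$. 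If instead $g^T\cap T_\alpha$ turns out to be a single $T_\alpha$-class, Lemma~\ref{CT(g) acts transitively} combined with Lemma~\ref{is a subquadrangle} reaches the same elliptic-curve equation more directly.

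The main obstacle is pinning down the precise class fusion of $\mathrm{S}_6$ into $T$, which determines $c$ and the orbit structure, together with the final Diophantine check.
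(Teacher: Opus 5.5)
Your fallback is the paper's argument, and the fusion you leave open resolves to your \emph{single class} branch: $2.\mathrm{A}_6\cong\mathrm{SL}_2(9)$ has a unique involution, and exactly one of the two classes of odd involutions of $\mathrm{S}_6$ lifts to involutions in $2.\mathrm{S}_6$, so the preimage contains $31$ involutions. Hence $Y_1\cap T_\alpha$ is one $\mathrm{S}_6$-class of size $15$, $C_T(g)$ is transitive on $\mathcal{P}_g$ and $\mathcal{L}_g$ with $|\mathcal{P}_g|=|\mathcal{L}_g|=q^2(q^2-1)^2/48$, and Lemma~\ref{is a subquadrangle} applies directly (neither a two-orbit lemma nor the assumption $T_\alpha=T_\ell$ is needed); the paper then sets $X=3s'+1$, $Y=3q(q^2-1)/4$ to get $Y^2=X^3+6X+20$, whose only integral point $(-2,0)$ gives $s'=-1$, so your reduction to $q\equiv 1\pmod{12}$ and the discussion of Lemma~\ref{lem:no_2_3_elemen} (correctly diagnosed as failing) are unnecessary.
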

\begin{proof}
Suppose that $T_\a$ is of type (p). Then $T_\a \cong T_\ell \cong \mathrm{S}_6$ for $q=p\equiv 1,11 \pmod{12}$. From Table \ref{tab:maximal_subgroups}, $|\PA|=q^4(q^4-1)(q^2-1)/1440$. Let $g$ be an involution in the $T$-conjugacy class $Y_1$ from Lemma \ref{Conj_Order=2}. Then $|g^T|=q^2(q^2+1)/2$. A Magma computation \cite{MAGMAbosma1994handbook} shows that the preimage $2.\mathrm{S}_6\le \Sp_4(q)$ of $\mathrm{S}_6$ contains  $31$ involutions. Since $T_\a$ and $T_\ell$ both contain $Z(\Sp_4(q))$, Lemma \ref{Conj_Order=2}(2) yields $|g^T \cap T_\a|=15$ and $|g^T \cap T_\ell|=15$. Since conjugacy classes in $\mathrm{S}_6$ are determined by cycle types, $\mathrm{S}_6$ has exactly three conjugacy classes of involutions, of sizes $15$, $45$, and $15$.  Thus, for the $T$-conjugacy class $Y_1$ in Lemma \ref{Conj_Order=2}, the intersections $Y_1\cap T_\alpha$ and $Y_1\cap T_\ell$ are single conjugacy classes in $T_\alpha$ and $T_\ell$, respectively. After  replacing \(T_\alpha\) by a suitable conjugate, we may assume \(g \in T_\alpha \cap T_\ell\). Hence $g^T \cap T_\a=g^{T_\a}$ and $g^T \cap T_\ell=g^{T_\ell}$, so $C_T(g)$ acts transitively on $\PA_g$ and $\LA_g$ by Lemma \ref{CT(g) acts transitively}. And Lemma \ref{Fix_points_number} yields $|\PA_g|=|\LA_g|=q^2(q^2-1)^2/48>2$. By Lemma \ref{is a subquadrangle}, $(\PA_g ,\LA_g)$ is a subquadrangle of order $s'$. Therefore,
\begin{equation}\label{type_p}
    q^2(q^2-1)^2/48=(s'+1)({s'}^2+1).
\end{equation}
Set $X=3s'+1$ and $Y=3q(q^2-1)/4$. Then (\ref{type_p}) reduces to
\begin{equation}\label{typq_p_eq}
    Y^2=X^3+6X+20.
\end{equation}
Since its Weierstrass discriminant is $\Delta=-186624\neq 0$, the cubic in \eqref{typq_p_eq} is nonsingular. Thus, \eqref{typq_p_eq} defines an elliptic curve. Using Magma \cite{MAGMAbosma1994handbook} with the command \texttt{E:=EllipticCurve([0,0,0,6,20])}, one finds that (\ref{typq_p_eq}) has exactly one integral solution $(X,Y)=(-2,0)$, which gives $s'=-1$, a contradiction. Hence $T_\a$ cannot be of type (p).
\end{proof}

\begin{lemma}\label{type_q_s}
The subgroup \(T_{\alpha}\) cannot be of type (q) or (s).
\end{lemma}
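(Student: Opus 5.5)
The plan is to rule out both types by the same arithmetic method used for types (c), (e)–(g), (i) and (k). Since $T_\alpha$ and $T_\ell$ have the same type, we have $|\mathcal{P}|=|\mathcal{L}|$, so $s=t$ by Lemma~\ref{parameters}\ref{vb}, and the equation $(s+1)(s^2+1)=|\mathcal{P}|$ should have no integer solution. No fixed-point or subquadrangle analysis should be needed.

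\textbf{Type (q).} Here $q=7$ and Table~\ref{tab:maximal_subgroups} gives $|\mathcal{P}|=|\mathcal{L}|=54880$. The function $s\mapsto (s+1)(s^2+1)$ is strictly increasing. At $s=37$ it equals $38\cdot 1370=52060<54880$, and at $s=38$ it equals $39\cdot 1445=56355>54880$. Hence no integer $s$ works, and type (q) is impossible.

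\textbf{Type (s).} Here $q=2^f$ with $f\ge 3$ odd, so $q\in\{8,32,128,\dots\}$, and
\[
|\mathcal{P}|=|\mathcal{L}|=q^2(q+1)^2(q-1)=(s+1)(s^2+1).
\]
Since $|\mathcal{P}|$ is even, $s$ is odd. Then $\gcd(s+1,s^2+1)=2$ and $(s^2+1)_2=2$. Comparing $2$-parts gives $(s+1)_2=q^2/2$, so $s+1\ge q^2/2$ and
\[
(s+1)(s^2+1)\ \ge\ \frac{q^2}{2}\Bigl(\bigl(\tfrac{q^2}{2}-1\bigr)^2+1\Bigr)=\frac{q^2}{2}\Bigl(\frac{q^4}{4}-q^2+2\Bigr).
\]
For $q\ge 32$ the left side is about $q^6/8$, while $|\mathcal{P}|<2q^5$. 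Since $q^6/8\ge 4q^5$ once $q\ge 32$, this gives a contradiction after a routine check of the lower-order terms.

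The only delicate case is $q=8$, where the displayed bound ($32\cdot 962=30784$) is smaller than $|\mathcal{P}|=64\cdot 81\cdot 7=36288$. There I will argue directly. We need $s+1\equiv 0\pmod{32}$ with $(s+1)/32$ odd, so $s+1\in\{32,96,\dots\}$. The value $s+1=32$ gives $30784\ne 36288$. The value $s+1=96$ already gives a product far exceeding $36288$. Hence there is no solution, and type (s) is impossible.

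The main obstacle is this small case $q=8$, where the generic $2$-adic size bound fails and we must use the exact form of $s+1$ (divisibility by $q^2/2$) rather than the inequality alone. Everything else is routine bookkeeping.
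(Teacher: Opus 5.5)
Your proposal is correct and follows the paper's proof essentially verbatim. For type (q) you show that $(s+1)(s^2+1)=54880$ has no integer solution. For type (s) you use the $2$-adic comparison $(s+1)_2=q^2/2$ to get a contradiction for $q\ge 32$, and you settle $q=8$ directly; your restriction to $s+1\in\{32,96,\dots\}$ is just a more explicit form of the paper's remark that $(s+1)(s^2+1)=36288$ has no solution.
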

\begin{proof}
Suppose first that \(T_{\alpha}\) is of type (q). Then \(|\PA|=(s+1)(s^2+1)=54880\), which has no solution in \(s\), a contradiction. Now suppose that \(T_{\alpha}\) is of type (s), where \(q=p^f\) is even and \(f\) is odd. By \eqref{number_point}, \(|\PA|=q^2(q+1)^2(q-1)=(s+1)(s^2+1)\) is even. Hence \(s\) is odd, so \((s^2+1)_2=2\) and \((s+1)_2=q^2/2\). Therefore, for \(q>8\),
\[(s+1)(s^2+1)\ge \tfrac{q^2}{2}\bigl(\tfrac{q^4}{4}-q^2+2\bigr)>q^2(q+1)^2(q-1)=|\PA|,\]
a contradiction. It remains to consider \(q=8\). In this case \(|\PA|=(s+1)(s^2+1)=36288\), which again has no solution for \(s\). This completes the proof.
\end{proof}

\subsection{The case when $\Ta$ and $T_\ell$ are of different types}\label{section:differnet-type}
In this section, we assume that \(T_\alpha\) and \(T_\ell\) are of different types in Table~\ref{tab:maximal_subgroups}. By point-line duality, we may further assume that $T_\alpha$ precedes $T_\ell$ alphabetically in the table. For each pair \((T_\alpha, T_\ell)\), we obtain \(|\PA|\) and \(|\LA|\) from the last column of Table \ref{tab:maximal_subgroups} by \eqref{number_point} and \eqref{number_line}, respectively. We then apply the inequality in Lemma~\ref{2|P|^5>|L|^4_2222}(2) to obtain restrictions on \(q\), and analyze the remaining possibilities.

\begin{lemma}\label{W(3,q)-q-odd}
Suppose that $T_\a$ is of type (a) or (b). Then up to duality, $\mc{S}$ is the classical symplectic quadrangle $W(3,q)$ for odd $q\ge 3$. 
\end{lemma}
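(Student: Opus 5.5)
We are in the different-type setting, with $T_\alpha$ of type (a) or (b), so $|\PA|=(q^2+1)(q+1)$. The plan is to pin down the type of $T_\ell$ by arithmetic and then recover the incidence geometrically, as in Lemma \ref{W(3,q)-q-even}.

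\textbf{Step 1: bound $|\LA|$.} Lemma \ref{2|P|^5>|L|^4_2222}(2) gives
\[2^{-1/5}|\PA|^{4/5}<|\LA|<2^{1/4}|\PA|^{5/4},\]
so $|\LA|=O(q^{15/4})$. This immediately excludes every $T_\ell$ with index of order $q^{10}$, namely types (j)--(m) and (o)--(q) (and (k) with $r\ge2$), as well as (f) and (i), which have index about $q^8$. Type (r) has index $q^3(q^4-1)\sim q^7$, also too large. Type (s) has index $\sim q^5$, exceeding $q^{15/4}$. Type (d) has index $\sim q^6/2$ and type (h) has index $\sim q^6/2$; both are excluded for $q\ge 3$. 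Types (e), (g), (n) have index about $q^4/2$, which is not excluded by this bound and must be treated by hand, as must the other parabolic.

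\textbf{Step 2: arithmetic for the surviving types.} For type (e), (g) or (n) we would solve
\[(s+1)(st+1)=(q^2+1)(q+1),\qquad (t+1)(st+1)=q^2(q^2\pm1)/2 .\]
Since $st+1$ divides $\gcd(|\PA|,|\LA|)$, and $\gcd((q^2+1)(q+1),\,q^2(q^2\pm1)/2)$ is small (it divides a bounded multiple of $q^2+1$ or $q+1$), we get $st+1$ small. Writing $|\PA|=(s+1)(st+1)$ then forces $s$ large, contradicting $s\le t^2$ (Lemma \ref{parameters}(3)) or the divisibility $s+t\mid st(s+1)(t+1)$ (Lemma \ref{parameters}(2)). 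Small $q$ that escape the estimates are checked directly.

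For type (c) ($q$ even, index $(q^2+1)(q+1)^2$), the quotient $|\LA|/|\PA|=(t+1)/(s+1)=q+1$ together with $(s+1)(st+1)=(q^2+1)(q+1)$ should give no solution. Note that $q$ odd is forced here, since the even case with (a)/(b) paired against (b) was handled in Lemma \ref{W(3,q)-q-even}. So for odd $q$ the only remaining pair is $T_\alpha$ of type (a) and $T_\ell$ of type (b), or vice versa (duality). Then $|\PA|=|\LA|$ forces $s=t=q$.

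\textbf{Step 3: identify the geometry.} In this final case, $T_\alpha$ is the stabilizer of a totally isotropic $1$-space $\alpha$ and $T_\ell$ that of a totally isotropic $2$-space. By Witt's Lemma, $T_\alpha$ has exactly two orbits on the totally isotropic $2$-spaces: those containing $\alpha$ (size $q+1$) and the rest (size $q^2(q+1)$). The $q+1$ lines of $\mathcal S$ through $\alpha$ form a $T_\alpha$-invariant set, so they must be exactly the first orbit. Hence incidence is inclusion and $\mathcal S\cong W(3,q)$, or its dual $Q(4,q)$ if the roles are swapped.

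The main obstacle is Step 2: making the gcd bounds uniform in $q$ so that the mixed pairs (a)/(b) with (e), (g) or (c) are excluded cleanly. If the estimates fail, the fallback is to use Lemma \ref{parameters}(2) case by case for each remaining pair.
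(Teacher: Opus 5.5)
Your skeleton is the paper's: use the size bound $2^{-1/5}|\mathcal P|^{4/5}<|\mathcal L|<2^{1/4}|\mathcal P|^{5/4}$ to eliminate most types of $T_\ell$, dispose of the survivors arithmetically, and finish the pair (a),(b) with the Witt's Lemma orbit argument. You reproduce that last argument correctly (orbits of sizes $q+1$ and $q^2(q+1)$).

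Step~1, however, has a concrete hole. It is not true that all of (j)--(m) and (o)--(q) have index of order $q^{10}$.
\begin{itemize}
\item For (j) and (k) the index is roughly $(q/q_0)^{10}$, which is only about $q^5/2$ when $r=2$. This is harmless, since it still exceeds $2^{1/4}|\mathcal P|^{5/4}$; the paper makes it precise via $|\mathcal L|>q_0^{10(r-1)}/2$.
\item More seriously, for type (m) at $q=3$ the index is $|\PSp_4(3)|/|2^4{:}\mathrm{A}_5|=27$. With $|\mathcal P|=40$ one has $2^{-1/5}\cdot 40^{4/5}\approx 16.7<27<2^{1/4}\cdot 40^{5/4}\approx 120$, so the size bound does not remove this pair. (The paper's written sketch passes over this case too.) It dies at once, though: $st+1$ divides $\gcd(40,27)=1$, which is impossible for $s,t\ge 2$.
\end{itemize}

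That gcd observation is also where your Step~2 genuinely departs from the paper. The paper applies the size bound to (e) and (g) as well. Contrary to your remark, $q^2(q^2\pm1)/2<2^{1/4}\bigl((q+1)(q^2+1)\bigr)^{5/4}$ forces $q\le 32$, and the paper then settles $q\le 32$ by computer. Your idea of bounding $st+1$ by $D=\gcd(|\mathcal P|,|\mathcal L|)$ avoids the computer, but the contradiction you describe is not the right one. Making $s$ large from $|\mathcal P|=(s+1)(st+1)$ alone only makes $t$ small, and then $s\le t^2$ need not fail. You must use both equations: $s+1\ge|\mathcal P|/D$ and $t+1\ge|\mathcal L|/D$ together force $st+1>D$.

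A uniform way to say this: since $(s+1)(t+1)=st+1+s+t\le 2(st+1)$, one gets $|\mathcal P||\mathcal L|=(s+1)(t+1)(st+1)^2\le 2D^3$. This fails in every surviving case:
\begin{itemize}
\item for (e) with $q$ odd, $D=(q^2+1)/2$ and the inequality becomes $2q^2(q+1)\le q^2+1$;
\item for (g) with $q$ odd, $D\mid 2(q+1)$;
\item for (e) or (n$^+$) with $q$ even, $D=q^2+1$, and for (g) or (n$^-$) with $q$ even, $D=q+1$.
\end{itemize}
In each case the inequality fails for all admissible $q$. Once repaired, your route is more elementary than the paper's and also catches (m) at $q=3$ automatically.

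For (c) the gcd trick is useless, since there $D=|\mathcal P|$. But the size bound already excludes (c) for every even $q\ge 4$, because $(q+1)^{3/4}/(q^2+1)^{1/4}>2^{1/4}$. So the hand computation you planned there is unnecessary, though your $t\le s^2$ argument with $t+1=(q+1)(s+1)$ would also work.
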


\begin{proof}
Suppose that $T_\alpha$ is of type (a) or (b). Then from Table \ref{tab:maximal_subgroups}, we have $|\mathcal{P}| = (q+1)(q^2+1)$.  For each possible type of $T_\ell$, we first use Lemma~\ref{2|P|^5>|L|^4_2222}(2) to restrict $q$. Since the arguments for different types are similar, we present the details only for type (j) and omit the rest. Suppose that $T_\ell$ is of type (j). Then $
|\mathcal{L}| = \frac{q^4(q^4-1)(q^2-1)}{q_0^4(q_0^4-1)(q_0^2-1)\cdot\gcd(2,r)}$, where $q=q_0^r$ and $r$ is prime. Note that $\frac{q^a-1}{q_0^a-1}>q_0^{a(r-1)}$ holds for any positive integer $a$. It follows that $|\LA|> q_0^{10(r-1)}/2$. On the other hand, using $|\mathcal{P}| < 4q_0^{3r}$  and Lemma \ref{2|P|^5>|L|^4_2222}(2), we obtain
    $$
    q_0^{10(r-1)}/2\leq |\LA| < 2^{1/4}|\PA|^{5/4}<2^{11/4}q_0^{15r/4}.
    $$
Since $q_0\geq 3$ and $r\geq 2$, this would imply $3^{5/2}<2^{15/4}$, which is false. Hence $T_\ell$ cannot be of type (j).

    For the remaining possible $T_\ell$ type, by the inequality in Lemma \ref{2|P|^5>|L|^4_2222}(2), we obtain the following restrictions: $q$ is odd if $T_\ell$ is of type (b); $q\leq 32$ if $T_\ell$ is of type (e) or (g). For the latter two types, a direct computer check shows that there are no integers $s,t$ satisfying \eqref{number_point} and \eqref{number_line}. Consequently, the only remaining possibility is that $T_\alpha$ is of type (a) and $T_\ell$ is of type (b) for odd $q$. In this case, we have $|\PA|=|\LA|=(q+1)(q^2+1)$, which gives $s=t=q$. By the geometric description in \cite[Table 2.3]{bray2013maximal}, the point set $\PA$ and line set $\LA$ can be identified with the sets of $1$-dimensional and $2$-dimensional totally isotropic subspaces of the $4$-dimensional symplectic space, respectively. By Witt's Lemma \cite[Theorem 7.4]{taylor1992geometry}, $T_\alpha$ has exactly two orbits on $\LA$, consisting of the $2$-dimensional totally isotropic subspaces containing the $1$-dimensional subspace corresponding to $\alpha$, and those not containing it, of sizes $q+1$ and $q^2(q+1)$, respectively. Since the $q+1$ lines incident with $\alpha$ form a $T_\alpha$-invariant subset of $\LA$, they are precisely the lines in the first orbit. Thus the incidence relation is given by inclusion, and therefore $\mathcal{S}\cong W(3,q)$ for odd $q$.
    
\end{proof}


\begin{lemma}
    The subgroup $T_\a$ cannot be of type (c).
\end{lemma}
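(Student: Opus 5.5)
We are in Section \ref{section:differnet-type}: $T_\alpha$ is of type (c) and, by the alphabetical convention, $T_\ell$ is of a different type later in Table~\ref{tab:maximal_subgroups}. Type (c) needs $q$ even, so the candidates for $T_\ell$ are those valid for even $q$: (e), (f), (g), (i), (k), (n), (s). Here $|\PA|=(q^2+1)(q+1)^2$. The plan follows Lemma~\ref{W(3,q)-q-odd}. First use Lemma~\ref{2|P|^5>|L|^4_2222}(2), that is $2^{-1/5}|\PA|^{4/5}<|\LA|<2^{1/4}|\PA|^{5/4}$, to rule out most types outright or bound $q$. Then check the finitely many survivors against \eqref{number_point} and \eqref{number_line}.

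\textbf{Step 1: types whose line count is too large.} Since $|\PA|<2q^4$ for $q\ge4$, the upper bound gives $|\LA|<2^{1/4}(2q^4)^{5/4}=2^{3/2}q^5$.
\begin{itemize}
\item Type (f) has $|\LA|\approx q^8/8$.
\item Type (i) has $|\LA|\approx q^8/4$.
\item Type (s) has $|\LA|\approx q^5$ with constant $1$. Its exact value $q^2(q+1)^2(q-1)$ must be compared with $2^{1/4}|\PA|^{5/4}$; this needs care for small $q$, for example $q=8$.
\item Type (k), with $q=q_0^r$, gives $|\LA|>q_0^{10(r-1)}$. Exactly as in the type (j) computation of Lemma~\ref{W(3,q)-q-odd}, this yields $q_0^{10(r-1)}<2^{3/2}q_0^{5r}$, which fails for $r\ge 3$ and fails for $r=2$ once $q_0\ge 2$ is checked.
\end{itemize}
Each of these violates the upper bound for all $q\ge4$ except possibly a few tiny values, which I check by hand.

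\textbf{Step 2: the remaining types.} Types (e) and (n) have $|\LA|=q^2(q^2\pm1)/2$, and type (g) has $|\LA|=q^2(q^2-1)/2$. Each is about $q^4/2$, so it is comparable to $|\PA|\approx q^4$ and the inequality gives no contradiction. Instead I use arithmetic. Since $q$ is even, $|\PA|=(q^2+1)(q+1)^2$ is odd, so $s$ is even; and $|\LA|$ is divisible by $q^2/2$. Next I take the ratio
\[
|\LA|/|\PA|=(t+1)/(s+1).
\]
From $(s+1)(st+1)=(q^2+1)(q+1)^2$ and $(t+1)(st+1)=q^2(q^2+\epsilon)/2$, the integer $st+1$ divides $\gcd(|\PA|,|\LA|)$. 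This gcd is tiny:
\begin{itemize}
\item $\gcd(q^2+1,q^2)=1$, and $\gcd(q^2+1,q^2-1)\mid 2$, hence equals $1$ for $q$ even;
\item $\gcd((q+1)^2,q^2)=1$;
\item $\gcd((q+1)^2,q^2-1)=q+1$ for $q$ even;
\item $\gcd(q^2+1,q^2+1)=q^2+1$ in type (e)/(n) with $\epsilon=+1$.
\end{itemize}
So $st+1\le q^2+1$ or $st+1\le q+1$. On the other hand, $(s+1)(st+1)\approx q^4$ with $s\le t^2$ and $t\le s^2$ (Lemma~\ref{parameters}\ref{st2}) forces $st+1$ to be roughly at least $q^{8/3}$. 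Comparing these gives a contradiction for all but small $q$.

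\textbf{Step 3 and the main obstacle.} The finitely many leftover small $q$, at most up to $q=32$, are finished by direct search for integers $s,t$ satisfying \eqref{number_point} and \eqref{number_line}. The main obstacle is Step 2 in the subcase $\epsilon=+1$, where $\gcd$ can be as large as $q^2+1$. There I would set $st+1=d\mid q^2+1$. Then $(s+1)d=(q^2+1)(q+1)^2$ and $(t+1)d=q^2(q^2+1)/2$. Since $\gcd((q+1)^2,q^2)=1$, this forces $d\mid q^2+1$ and $(t+1)/(s+1)=q^2/(2(q+1)^2)$. Because $s+1$ and $t+1$ are integers, $(q+1)^2\mid s+1$ and $q^2/2\mid t+1$, so $st+1\ge q^4/2$. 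That contradicts $st+1\le q^2+1$.
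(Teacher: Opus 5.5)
Your Step 1 fails for two of the candidates, and neither failure is a small-$q$ issue. For type (k) with $r=2$, your own inequality reads $q_0^{10}<2^{3/2}q_0^{10}$, which is true, not false. Concretely, $|\LA|=q^2(q^2+1)(q+1)$ and $|\PA|=(q^2+1)(q+1)^2$. So $|\LA|\sim q^5$ lies inside the window $2^{-1/5}|\PA|^{4/5}<|\LA|<2^{1/4}|\PA|^{5/4}$ for every $q_0$; for example, at $q=4$ we get $1360$ against an upper bound of about $2295$. Likewise, for type (s) we have $|\LA|=q^2(q+1)^2(q-1)=q^5+q^4-\cdots$, while $2^{1/4}|\PA|^{5/4}\approx 2^{1/4}q^5(1+\tfrac{5}{2q})$. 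So the upper bound holds for all $q\ge 8$; at $q=32$, for instance, it is about $3.46\cdot 10^7$ against $4.3\cdot 10^7$. The estimate $|\LA|<2^{3/2}q^5$ you derived is exactly what lets both cases through. Both therefore survive Step 1 for infinitely many $q$, and the finite search in Step 3 cannot close them.

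Both cases need the ratio argument, which is what the paper uses.

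\textbf{Type (k), $r=2$.} The ratio $(s+1)/(t+1)=(q+1)/q^2$ is in lowest terms, so $s+1=(q+1)k$ and $t+1=q^2k$. Then $|\PA|>(s+1)^2(t+1)/2$ gives $k^3<2+2/q^2$, hence $k=1$ and $(s,t)=(q,q^2-1)$. Now $(q+1)(q^3-q+1)=(q^2+1)(q+1)^2$ reduces to $q(q+2)=0$, which is impossible. Your gcd heuristic cannot replace this step: here $\gcd(|\PA|,|\LA|)=(q^2+1)(q+1)$ has order $q^3$, which exceeds your lower estimate for $st+1$. Incidentally, that estimate should be $q^{12/5}$ rather than $q^{8/3}$, since $t\ge\sqrt{s}$ gives $s<|\PA|^{2/5}$.

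\textbf{Type (s).} Move this case to Step 2. Since $\gcd(q^2+1,q^2(q-1))=1$, we get $s+1\ge q^2+1$ and $t+1\ge q^2(q-1)$, which contradicts $|\PA|>(s+1)^2(t+1)/2$. Equivalently, $st+1$ divides $(q+1)^2$ while $s\ge q^2$, which forces $t\le 1$.

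With these two repairs the rest of your plan is sound. Types (f), (i), and (k) with $r\ge 3$ are eliminated by the window inequality. Types (e), (g) and (n) are eliminated by the ratio and gcd argument, and your $\epsilon=+1$ subcase is precisely the paper's treatment of (e).
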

\begin{proof}
    Suppose, for a contradiction, that $T_\alpha$ is of type (c). Then $|\PA|=(q^2+1)(q+1)^2$, where   $q$ is even. We first consider the case where $T_\ell$ is of type (k). Then $|\LA|=\tfrac{q^4(q^4-1)(q^2-1)}{q_0^4(q_0^4-1)(q_0^2-1)}$, where $q=q_0^r$ and $r$ is prime. Note that $|\PA|<2q_0^{4r}$ and $|\LA|\geq q_0^{10(r-1)}$. Lemma \ref{2|P|^5>|L|^4_2222}(2) gives $2^{5r-10}\leq q_0^{5r-10}<2^{3/2}$. Hence  $r=2$. Then $|\LA|=q^2(q^2+1)(q+1)$ and therefore $\frac{s+1}{t+1}=\frac{|\PA|}{|\LA|}=\frac{q+1}{q^2}$. We   write $s+1=(q+1)k$ and $t+1=q^2k$ for some positive integer $k$. Applying Lemma \ref{2|P|^5>|L|^4_2222}(1) to $|\mathcal{P}|$ yields \[(q^2+1)(q+1)^2=|\PA|>(s+1)^2(t+1)/2=q^2(q+1)^2k^3/2,\] so $k^3 < 2 + 2/q^2 < 3$, whence $k=1$. Consequently $(s,t) = (q, q^2-1)$. Substituting into $|\mathcal{P}|$ gives
$(q+1)(q^3-q+1) = (q^2+1)(q+1)^2$, which simplifies to $q(q+2)=0$, impossible. Hence $T_\ell$ cannot be of type (k).

    We now apply the bound in Lemma~\ref{2|P|^5>|L|^4_2222}(2) to the remaining possibilities for $T_\ell$.  This restricts $T_\ell$ to types (e), (g), (n) or (s). Suppose that $T_\ell$ is of type (e). Then $|\LA|=q^2(q^2+1)/2$ and $\frac{s+1}{t+1}=\frac{|\PA|}{|\LA|}=\frac{2(q+1)^2}{q^2}$. Since $q$ is even, we have $\gcd((q+1)^2,q^2/2)=1$. Hence $s+1\geq (q+1)^2$ and $t+1\geq q^2/2$.  Applying Lemma \ref{2|P|^5>|L|^4_2222}(1) gives \[q^2(q^2+1)/2=|\LA|>(t+1)^2(s+1)/2\geq q^4(q+1)^2/8,\] impossible for any $q \ge 4$. Therefore $T_\ell$ cannot be of type (e). The remaining types (g), (n) and (s) can be ruled out by an analogous argument, and we omit the details.
\end{proof}

\begin{lemma}\label{diff_type_d}
    The subgroup $T_\a$ cannot be of type (d) or (h).
\end{lemma}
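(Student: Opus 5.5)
We are in the different-type case. By duality we assume $T_\alpha$ precedes $T_\ell$ alphabetically, and by Lemmas~\ref{W(3,q)-q-odd} and the preceding type (c) lemma, $T_\ell$ is not of type (a), (b) or (c). So for $T_\alpha$ of type (d) we must treat $T_\ell$ of types (e),(g),(h),(j),(l),(m),(o),(p),(q),(r). For $T_\alpha$ of type (h) we must treat types (j),(l),(m),(o),(p),(q),(r). Types requiring $q$ even, namely (f),(i),(k),(n),(s), are excluded because (d) and (h) force $q$ odd with $q\ge5$.

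The first step is the size inequality of Lemma~\ref{2|P|^5>|L|^4_2222}(2), $2^{-1/5}|\PA|^{4/5}<|\LA|<2^{1/4}|\PA|^{5/4}$. Here $|\PA|=q^3(q^2+1)(q\pm1)/2$, which is roughly $q^6/2$. For $T_\ell$ of type (j), the same estimate as in Lemma~\ref{W(3,q)-q-odd}, namely $|\LA|>q_0^{10(r-1)}/2$ with $q=q_0^r$, is compared with $|\PA|^{5/4}\approx q^{15/2}$. This leaves only $r=2$, where $|\LA|=q_0^4(q_0^4+1)(q_0^2+1)/2\approx q^5/2$, and that survives the bound. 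The large-index types (l),(m),(o),(p),(q) have $|\LA|\approx q^{10}/c$, so they fail the upper bound $|\LA|<2^{1/4}|\PA|^{5/4}\approx q^{7.5}$ except for finitely many small $q$. Type (r) has $|\LA|=q^3(q^4-1)\approx q^7$, which is within the range. Type (e) has $|\LA|\approx q^4/2$, which is comparable to $|\PA|^{4/5}\approx q^{4.8}$ and fails for large $q$. Type (g) behaves the same way. This stage reduces everything to a short list of infinite families (essentially (d)/(h) against (h), (j) with $r=2$, and (r)) plus finitely many $q$, which are checked by computer against \eqref{number_point} and \eqref{number_line}.

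For each surviving infinite family, the plan is to write $\frac{s+1}{t+1}=\frac{|\PA|}{|\LA|}$ in lowest terms $a/b$, so that $s+1=ak$ and $t+1=bk$. Then apply Lemma~\ref{2|P|^5>|L|^4_2222}(1), as was done for type (c) against (e), to bound $k$. This should pin $(s,t)$ down to a finite set of polynomial candidates in $q$, and substituting back into $|\PA|=(s+1)(st+1)$ should give a polynomial identity with no solution. Alternatively, a $p$-adic argument in the style of Lemma~\ref{type_d} works. Since $q^3$ divides $|\PA|$ and the pairwise gcds of $s+1$ and $st+1$ are controlled, $q^3$ must divide $s+1$ or $st+1$, and likewise for $|\LA|$. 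Combined with the 2-part comparisons ($|\PA|_2$ is small when $q\equiv\pm1\pmod 4$), this should yield a contradiction.

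I expect the main obstacle to be (d) against (h), and (d)/(h) against (r), for all odd $q$. There $|\PA|$ and $|\LA|$ are both of order $q^6$ to $q^7$ and share the factor $q^3$, so neither crude inequalities nor ratio arguments obviously suffice. If the arithmetic fails, I will fall back on the fixed-element method. Take an involution or an element of order $3$ whose $T$-class misses both $T_\alpha$ and $T_\ell$, and invoke Lemma~\ref{lem:no_2_3_elemen}. This applies to (r), since $\PSL_2(q)$ has one involution class, provided $s=t$ can be forced. Otherwise use Benson's Lemma~\ref{lem:benson} with $|\LA_g|=\mathcal{L}_1(g)=0$, which gives $st+1\equiv 0\pmod{s+t}$ and should be incompatible with the candidate parameters.
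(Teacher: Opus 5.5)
Your proposal has two genuine gaps: an infinite family is dropped in the type (j) reduction, and there is no working argument for the families you yourself flag as hardest.

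\textbf{The (j) reduction.} The size bound does not leave only $r=2$. The estimate $q_0^{10(r-1)}/2<|\mathcal{L}|<2^{1/4}|\mathcal{P}|^{5/4}<2^{1/4}q_0^{15r/2}$ rules out only $r\ge 5$. For $r=3$ one has $|\mathcal{L}|\approx q_0^{20}$ and $|\mathcal{P}|\approx q_0^{18}/2$, well inside the window $2^{-1/5}|\mathcal{P}|^{4/5}<|\mathcal{L}|<2^{1/4}|\mathcal{P}|^{5/4}$. So (d) against (j) with $q=q_0^3$ is an infinite family you never treat. The paper handles it as follows:
\begin{itemize}
\item It writes $s+1=Ak$, $t+1=Bk$, $st+1=g/k$, with explicit coprime $A,B$ and odd $g$.
\item The bound $|\mathcal{P}|>(s+1)^2(t+1)/2$ gives $k<q_0^2$.
\item Reducing $st+1$ modulo $q_0$ and $q_0^2$ forces $k\in\{q_0^2-q_0+2,\ q_0^2-3q_0+6,\ 16\}$. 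All of these are even, contradicting $k\mid g$.
\end{itemize}
For $r=2$ your $p$-adic idea is what the paper does: $p\nmid k$, so $q^2\mid st+1$, whence $k\equiv 2\pmod q$, then $k=2q+2$, and finally $q^2-16q-23=0$, which has no solution.

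\textbf{The hard families.} For (d) against (h), and (d) or (h) against (r), the ratio plus the size bounds does not give finitely many polynomial candidates.
\begin{itemize}
\item For (h), $s+1=k(q+1)/2$ and $t+1=k(q-1)/2$, and the inequalities only give $q<k\lesssim 2q$.
\item For (r), $s+1=k$ and $t+1=2(q-1)k$, so $k$ is of order $q^{5/3}$.
\item A $p$-adic lift only yields $p\nmid k$ and congruences such as $k^2\equiv 8\pmod q$, whose roots are not explicit in $q$.
\end{itemize}
The missing tool is the divisibility condition of Lemma~\ref{parameters}(2), $s+t\mid st(s+1)(t+1)$, combined with gcd computations.
\begin{itemize}
\item For (h): $\gcd(s,t)=1$ and $\gcd(qk-2,k)\mid 2$, so $s+t=qk-2$ divides $q^2-1$. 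This is impossible because $k>q$, which follows from $|\mathcal{P}|<(s+1)^2(t+1)$.
\item For (r): one gets $s+t\mid 8(q-1)(2q-3)^2$. Reducing modulo $2q-1$ shows the cofactor is $\equiv 8\pmod{2q-1}$. The size bounds then finish, together with a computer check for $q<47$.
\end{itemize}

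\textbf{The fallback fails.} Lemma~\ref{lem:no_2_3_elemen} assumes $s=t$, which is impossible here since $|\mathcal{P}|\neq|\mathcal{L}|$. Both that lemma and your Benson variant need a class of elements of order $2$ or $3$ missing both $T_\alpha$ and $T_\ell$, and no such class exists. $T_\alpha$ of type (d) contains the images of $A\oplus A^{-T}$ for all $A\in\GL_2(q)$. Taking $A=\mathrm{diag}(1,-1)$, or $A$ with $A^2=-\mathrm{I}_2$, gives representatives of both $Y_1$ and $Y_2$. One checks similarly that each class of elements of order $3$ meets $T_\alpha$ or $T_\ell$.
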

\begin{proof}
We argue by contradiction.

\smallskip
    \textbf{Case 1: $T_\a$ is of type (d).} Then $|\PA|=q^3(q^2+1)(q+1)/2$, where $q\geq5$ is odd. We first consider the case where $T_\ell$ is of type (j). Then $|\LA|=\tfrac{q^4(q^4-1)(q^2-1)}{q_0^4(q_0^4-1)(q_0^2-1)\cdot\gcd(2,r)}$. By Lemma \ref{2|P|^5>|L|^4_2222}(2), we deduce that $q_0^{10(r-1)}/2\leq |\LA|< 2^{1/4}|\PA|^{5/4}<2^{1/4}q_0^{15r/2}$, which implies that $r=2$ or $3$.
    
    Assume first that $r=2$. Then $|\LA|=q^2(q^2+1)(q+1)/2$ and  $\frac{|\PA|}{|\LA|} = q$. We may assume that $s+1=qk$ and $t+1=k$ for some positive integer $k$.  If $p\nmid (st+1)$, then $(s+1)_p=|\PA|_p=q^3$ and $(t+1)_p=|\LA|_p=q^2$. By Lemma \ref{2|P|^5>|L|^4_2222}(1), \[|\PA|=q^3(q^2+1)(q+1)/2>(s+1)^2(t+1)/2=q^2k^3/2>q^8/2 \] since $k \ge q^2$ from $(s+1)_p = q^3$,  a contradiction. Hence $p\mid st+1$ and $k<q^2$.  Write $k=mq+a$ with    $0 \leq m,a < q$. As $\gcd(s+1,t+1,st+1)\mid 2$ and $q$ is odd, we have $(st+1)_p=q^2$.
     Reducing the expression for $st+1$ modulo $q$   gives $0\equiv st+1\equiv 2-a\pmod q$, so $a=2$. Substituting $a=2$ into $st+1$ and reducing modulo $q^2$ yields $0\equiv st+1\equiv (2-m)q\pmod{q^2}$, so $m=2$. Finally, substituting $k = 2q+2$ into $|\PA|=(s+1)(st+1)$, we  deduce   that  $q^2 - 16q - 23 = 0$. This quadratic has no integer solution for $q \ge 5$, a contradiction. Therefore $r=2$ cannot occur.

    Next assume that $r=3$. Define  $A:=q_0(q_0^2+1)(q_0+1)/{2d}$, $B:=(q_0^4+q_0^2+1)(q_0^2+q_0+1)/d$ and $g:=q_0^8(q_0^2-q_0+1)(q_0^4-q_0^2+1)d$, where $d=\gcd(3,q_0+1)$. Then from  Table \ref{tab:maximal_subgroups}, $|\PA|=Ag$, $|\LA|=Bg$ and $\frac{s+1}{t+1}=\frac{A}{B}$. Since $\gcd(A,B)=1$, we may write $s+1=Ak$ and $t+1=Bk$ for some positive integer $k$, and then $st+1=g/k$. We can rule out the cases $q_0<7$ by solving the equations in (\ref{number_point}) and (\ref{number_line}). So we assume $q_0\geq 7$. If $k>q_0^2$, then by Lemma \ref{2|P|^5>|L|^4_2222}(1), we have \[Ag=|\PA|>(s+1)^2(t+1)/2=A^2Bk^3/2>A^2Bq^2/2,\] which is impossible for $q_0\geq 7$. Hence $k<q_0^2$. Write $k=mq_0+a$ with $0\le m,a<q_0$. Moreover, we deduce that $(st+1)_{q_0}=(g/k)_{q_0}\geq q_0^6$. Note that $st+1=ABk^2-(A+B)k+2$, and we have $st+1\equiv 0\pmod{q_0}$ and $st+1\equiv 0\pmod{q_0^2}$. Reducing $st+1$ modulo $q_0$ yields $a\equiv 2d\pmod{q_0}$. If $d=1$, then $a=2$. If $d=3$, then $a=6$ for $q_0>5$, and $a=1$ for $q_0=5$. Reducing $st+1$ modulo $q_0^2$ then gives $m=q_0-1$ when $d=1$; $m=q_0-3$ when $d=3$ and $q_0>5$; $m=3$ when $(q_0,d)=(5,3)$. Hence $k=q_0^2-q_0+2$, $q_0^2-3q_0+6$, or $k=16$, respectively. In each case $k$ is even, whereas $g=q_0^8(q_0^2-q_0+1)(q_0^4-q_0^2+1)d$ is odd. This contradicts the fact that $k\mid g$. Therefore $r=3$ is impossible.

\smallskip
    We now apply the bound in Lemma \ref{2|P|^5>|L|^4_2222}(2) to the remaining possibilities for $T_\ell$.  This restricts $T_\ell$ to types (l)–(q) with $q\le 23$, type (h) or type (r). For types from (l) to (q), a computer verification shows that no pair $(s,t)$ satisfies the equations (\ref{number_point}) and (\ref{number_line}). Thus it remains to consider types (h) and (r). 
    
\smallskip
If $T_\ell$ is of type (h), then $|\LA|=q^3(q^2+1)(q-1)/2$ and $\frac{s+1}{t+1}=\frac{|\PA|}{|\LA|}=\frac{q+1}{q-1}$.
Since $q$ is odd, $\gcd(q+1,q-1)=2$, so we may write $s+1=k(q+1)/2$ and $t+1=k(q-1)/2$ for some positive integer $k$.
If $k\leq q$, then
\[q^3(q^2+1)(q+1)/2=|\PA|<(s+1)^2(t+1)=(q+1)^2(q-1)k^3/8\leq q^3(q+1)^2(q-1)/8,\]
which is impossible. Hence $k>q$.
By Lemma~\ref{parameters}(2), $s+t=qk-2$ divides $st(s+1)(t+1)=st(q^2-1)k^2/4$.
A routine gcd computation yields $\gcd(s,t)=1$, and $\gcd(s+t,k)=\gcd(qk-2,k)$ divides $2$.
Consequently  $s+t$ divides $q^2-1$.
But $k>q$ gives $q^2-2<qk-2=s+t\leq q^2-1$, so $qk-2=q^2-1$, impossible. Thus $T_\ell$ cannot be of type (h).

    \smallskip
 If $T_\ell$ is of type (r), then $|\LA|=q^3(q^4-1)$ and $t+1=2(q-1)(s+1)$. Set $s+1=k$, so $t+1=2(q-1)k$ and $s+t=(2q-1)k-2$. By Lemma \ref{parameters}(3), $s+t \mid st(s+1)(t+1)$. Moreover,
$\gcd(s,t)=\gcd(k-1,2(q-1)k-1)$ divides $2q-3$,
$\gcd(s+t,s+1)=\gcd(2,k)$ and $\gcd(s+t,t+1)=\gcd(4(q-1),k-2)$.
Consequently, $s+t \mid 8(q-1)(2q-3)^2$. Denote $N = 8(q-1)(2q-3)^2$ and write $N = (s+t)a$ for some positive integer $a$. Reducing $N$ and $s+t$ modulo $2q-1$ gives
\[
N \equiv -16 \pmod{2q-1},\qquad s+t \equiv -2 \pmod{2q-1},
\]
hence $-2a \equiv -16 \pmod{2q-1}$, i.e., $a \equiv 8 \pmod{2q-1}$.  Therefore either $a = 8$ or $a \ge 2q+7$. If $a=8$, then $s+t=(q-1)(2q-3)^2=(2q-1)k-2$, whence $k=2q^2-7q+7$. By Lemma \ref{2|P|^5>|L|^4_2222}(1), we have 
$$
q^3(q^4-1)=|\LA|>(t+1)^2(s+1)/2=2(q-1)^2(2q^2-7q+7)^3
$$
which is false for   $q\geq 5$. If $a\geq 2q+7$, then from $s+t\leq N/(2q+7)$ we deduce that   $k=(s+t+2)/(2q-1)<8q-7$. Therefore,
$$
q^3(q^4-1)=|\LA|\leq(t+1)^2(s+1)=4(q-1)^2k^3<4(q-1)^2(8q-7)^3,
$$
which is false for $q\geq 47$. For $q<47$, we check each case by computer that there is no solution $(s,t)$ for equations (\ref{number_point}) and (\ref{number_line}). Thus $T_\ell$ cannot be of type (r).

\smallskip
    \textbf{Case 2: $T_\a$ is of type (h).}
For odd $q\ge 5$, we have $|\PA|=\frac{1}{2}q^{3}(q^{2}+1)(q-1)$.
Applying Lemma~\ref{2|P|^5>|L|^4_2222}(2) and arguing as Case 1, we find that $T_\ell$ can only be of type (l) to (q)  with $q\le 19$, or of type (r).
All  these possibilities are excluded by the same computations as in Case 1 (including the same computer verification for types (l)--(q)), and we omit the routine details.
\end{proof}

\begin{lemma}\label{H(3,4)}
 Suppose that $T_\alpha$ is of type (e) or (g).
Then the only possibility is $q=3$, with $T_\alpha$ of type (e)
and $T_\ell$ of type (m), in which case $\mc{S}\cong H(3,4)$.
\end{lemma}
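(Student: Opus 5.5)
We follow the same scheme as in Lemmas~\ref{diff_type_d} and \ref{W(3,q)-q-odd}. Since the types differ and we may assume $T_\alpha$ precedes $T_\ell$ alphabetically, $T_\ell$ is of a type later than (e) (respectively later than (g)). By Table~\ref{tab:maximal_subgroups}, $|\PA|=q^2(q^2+1)/2$ for type (e) and $|\PA|=q^2(q^2-1)/2$ for type (g); in both cases $|\PA|<q^4$.

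\textbf{Step 1: bound $|\LA|$ against $|\PA|$.} Lemma~\ref{2|P|^5>|L|^4_2222}(2) gives $|\LA|<2^{1/4}|\PA|^{5/4}<2^{1/4}q^5$. Every subfield or $\mathcal{C}_9$-type subgroup in rows (j)--(s) has index of order roughly $q^{10}/|M\cap T|$, and this exceeds $2^{1/4}q^5$ except for small $q$. Row (j) is handled as in Lemma~\ref{W(3,q)-q-odd}: we have $|\LA|>q_0^{10(r-1)}/2$, which with $q=q_0^r$ is too large. Rows (h), (i) and (r) have index about $q^6$, so they are also too large for every $q$. Among the remaining possible $T_\ell$, the parabolic-like rows come before (e) and so are excluded by the ordering. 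Row (f) ($q$ even) has index about $q^8/8$, and row (n) has index about $q^4/2$. The $\mathcal{C}_9$ rows (l), (m), (o), (p), (q) survive only for a short list of small $q$; for example, row (m) with $q=3$ gives $|\LA|=3^4\cdot 80\cdot 8/1920=27$.

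\textbf{Step 2: arithmetic elimination.} For each surviving pair (type of $T_\alpha$, type of $T_\ell$, $q$), I would solve \eqref{number_point} and \eqref{number_line} for $(s,t)$. For the infinite families (e)/(n) and (g)/(n) with $q$ even, I would use $\frac{s+1}{t+1}=|\PA|/|\LA|$ together with Lemma~\ref{2|P|^5>|L|^4_2222}(1), as in the type (c) lemma; there the ratio $(q^2\pm1)/(q^2\mp 1)$ forces $k$ small and gives a contradiction. Type (g) versus (n) is treated the same way. The finitely many small cases are settled by direct computer check. The only survivor should be $q=3$, with $T_\alpha$ of type (e) ($|\PA|=45$) and $T_\ell$ of type (m) ($|\LA|=27$). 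This gives $(s,t)=(4,2)$, i.e.\ the dual of order $(2,4)$.

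\textbf{Step 3: identification.} With $q=3$ we have $T=\PSp_4(3)\cong \PSU_4(2)$, and $\mathcal{S}$ has order $(4,2)$. A generalized quadrangle of order $(2,4)$ is unique, namely $Q^-(5,2)$, whose dual is $H(3,4)$. So up to duality $\mathcal{S}\cong H(3,4)$ by \cite{PayneThas2009}. Alternatively, one can identify the geometry directly from the coset actions of $T$ on the 45 points and 27 lines.

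The main obstacle is Step 1 for the $\mathcal{C}_9$ and subfield rows: we need clean inequalities valid for all $q$, leaving only a finite computer check. Infinite families where the bound is nearly tight, such as (n) against (e) and (g), need the divisibility and $k$-parameter argument rather than the size bound.
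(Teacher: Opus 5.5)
Your overall route is the paper's: the size filter from Lemma~\ref{2|P|^5>|L|^4_2222}(2), a ratio argument with Lemma~\ref{2|P|^5>|L|^4_2222}(1), a finite check, then uniqueness of the quadrangle of order $(4,2)$. However, Step~2 fails on two infinite families. Take $T_\alpha$ of type (e) with $T_\ell$ of type (n) and $\epsilon=+1$, or $T_\alpha$ of type (g) with $T_\ell$ of type (n) and $\epsilon=-1$, for all even $q\ge 4$. Then $|\PA|=|\LA|=q^2(q^2\pm1)/2$, so $s=t$ and the ratio $(s+1)/(t+1)$ equals $1$, not $(q^2\pm1)/(q^2\mp1)$. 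With $s=t$, Lemma~\ref{2|P|^5>|L|^4_2222}(1) reduces to $(s-1)^2>0$, which always holds, so no contradiction arises. A computer check cannot cover all $q$ either. What is missing is an arithmetic argument. The value $(s+1)(s^2+1)=q^2(q^2\pm1)/2$ is even, so $s$ is odd, $(s^2+1)_2=2$, and hence $(s+1)_2=q^2/4$. Then $s+1\ge q^2/4$ makes the left side too large for $q\ge 8$, and for $q=4$ neither $136$ nor $120$ has the form $(s+1)(s^2+1)$. This is exactly the argument of Lemma~\ref{type_e_f_g_i_n}, which the paper reuses at this point. The pair (e)/(g), which you do not list explicitly, is harmless. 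The ratio $(q^2+1)/(q^2-1)$ with $\gcd$ dividing $2$ gives $s+1\ge (q^2+1)/2$ and $t+1\ge (q^2-1)/2$, and Lemma~\ref{2|P|^5>|L|^4_2222}(1) then forces $8q^2>q^4-1$, which is false for $q\ge 3$.

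Step~1 is also too coarse for the subfield rows with $r=2$, and these rows are not treated in Step~2. Type (j) then has index $q^2(q^2+1)(q+1)/2$, which is always below $2^{1/4}q^5$. Type (k) has index $q^2(q^2+1)(q+1)$, which is below $2^{1/4}q^5$ for $q\ge 16$. The lower bound $q_0^{10(r-1)}/2=q^5/2$ borrowed from Lemma~\ref{W(3,q)-q-odd} does not help either, because it lies below $2^{1/4}|\PA|^{5/4}$ when $T_\alpha$ has type (e). So your claim that rows (j)--(s) comfortably exceed the bound is false here. The fix is to keep the exact bound instead of $2^{1/4}q^5$. For type (e) this is $2^{1/4}|\PA|^{5/4}=q^{5/2}(q^2+1)^{5/4}/2$, and for type (g) it is the smaller $q^{5/2}(q^2-1)^{5/4}/2$; compare these with the exact indices. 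For example, $q^2(q^2+1)(q+1)/2$ exceeds the type (e) bound because $(q+1)^4>q^2(q^2+1)$. With these two repairs the argument goes through, and your Step~3 matches the paper.
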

\begin{proof}
We consider the two possibilities for $T_\alpha$ separately.

\smallskip
\textbf{Case 1: $T_\alpha$ is of type (e).} Then $|\mathcal{P}| = q^2(q^2+1)/2$. Applying Lemma \ref{2|P|^5>|L|^4_2222}(2) and comparing with the possible values of $|\mathcal{L}|$ from Table~\ref{tab:maximal_subgroups},  we find that $T_\ell$ can only be of type (g), of type (n), or of type (m) with $q=3$. 

    Assume that $T_\ell$ is of type (g). Then $|\LA|=q^2(q^2-1)/2$ and $\frac{s+1}{t+1}=\frac{|\PA|}{|\LA|}=\frac{q^2+1}{q^2-1}$. Since $\gcd(q^2-1,q^2+1)$ divides $2$, we have $s+1\geq (q^2+1)/2$ and $t+1\geq (q^2-1)/2$. By Lemma~\ref{2|P|^5>|L|^4_2222}(1), it follows that $q^2(q^2+1)/2=|\PA|>(q^2+1)^2(q^2-1)/16$, which is impossible for $q\geq 3$.

    Assume that $T_\ell$ is of type (n). Then $|\LA|=q^2(q^2+\epsilon)/2$ with $\epsilon=\pm1$. The case $\epsilon=-1$ is excluded by the same argument as in type (g). Hence we may assume that $\epsilon=+1$, in which case $|\mathcal{P}| = |\mathcal{L}|$ and consequently $s = t$. Equation (\ref{number_point}) then reduces to $(s+1)(s^2+1) = q^2(q^2+1)/2$, which has no solution in integers $s>1$ by the proof of Lemma~\ref{type_e_f_g_i_n}. 

     Now assume that $T_\ell$ is of type (m) with $q=3$. Then $|\PA|=(s+1)(st+1)=45$ and $|\LA|=(t+1)(st+1)=27$. Since $\mc{S}$ is thick, $s,t\geq 2$, hence $st+1\geq 5$. Moreover, $st+1$ divides $\gcd(45,27)=9$. Therefore, $st+1=9$. It follows that $(s,t)=(4,2)$. By \cite[5.3.2 and 3.2.3]{PayneThas2009}, the unique generalized quadrangle of order \((4,2)\) is \(\mathrm H(3,4)\). Therefore, \(\mathcal S\cong\mathrm H(3,4)\).

\smallskip
\textbf{Case 2: $T_\a$ is of type (g).}
Lemma~\ref{2|P|^5>|L|^4_2222}(2) implies that either
$T_\ell$ is of type (m) with $q=3$, or $T_\ell$ is of type (n)
with $q$ even. The former possibility can be excluded, since
there are no integers $s,t>1$ satisfying both
(\ref{number_point}) and (\ref{number_line}). The latter
possibility can be excluded by the same argument as in the
preceding case for type (n), so we omit the details. This completes the proof.
\end{proof}

\begin{lemma}\label{lem:T_a-f-i-and-T_l-k}
    Suppose that $T_\a$ is of type (f) or (i). Then $T_\ell$ cannot be of type (k).
\end{lemma}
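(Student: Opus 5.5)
We would argue by contradiction, in the style of Lemma~\ref{diff_type_d}: first use the size inequality of Lemma~\ref{2|P|^5>|L|^4_2222}(2) to pin down the field degree $r$, and then kill the survivors by arithmetic on \eqref{number_point} and \eqref{number_line}. So suppose $q=2^f$ and that $T_\alpha$ is of type (f) or (i), so that
\[
|\mathcal P|=\tfrac{q^4(q-\epsilon)^2(q^2+1)}{8}\quad\text{or}\quad \tfrac{q^4(q^2-1)^2}{4}.
\]
Suppose also that $T_\ell$ is of type (k), so that
\[
|\mathcal L|=\tfrac{q^4(q^4-1)(q^2-1)}{q_0^4(q_0^4-1)(q_0^2-1)},\qquad q=q_0^r,\ r \text{ prime}.
\]
In both cases we have the crude bounds
\[
q_0^{8r}/16<|\mathcal P|<q_0^{8r}\quad\text{and}\quad q_0^{10(r-1)}\le |\mathcal L|<2q_0^{10(r-1)}.
\]
The second uses $\frac{q^a-1}{q_0^a-1}>q_0^{a(r-1)}$, exactly as in Lemma~\ref{W(3,q)-q-odd}.

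\textbf{The case $r=2$.} Here $|\mathcal L|=q^2(q^2+1)(q+1)\approx q^5$, while $|\mathcal P|\approx q^8/8$. The lower bound $|\mathcal L|>2^{-1/5}|\mathcal P|^{4/5}$ would then force $q^5\gtrsim q^{32/5}/8^{4/5}$. This fails once $q\ge 8$ or so. The remaining small values ($q=4$, i.e.\ $q_0=2$, and possibly $q=16$) we would dispose of by directly checking that \eqref{number_point}--\eqref{number_line} have no integer solutions $s,t\ge 2$.

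\textbf{The case $r\ge 3$ (odd prime).} The inequality no longer kills everything, since $10(r-1)$ and $8r\cdot\frac54=10r$ are comparable. So I would follow the template used for type (j) with $r=3$ in Lemma~\ref{diff_type_d}.
\begin{enumerate}
\item Compute $|\mathcal P|/|\mathcal L|=A/B$ in lowest terms. Write $s+1=Ak$, $t+1=Bk$ and $st+1=g/k$, where $g=\gcd$-part.
\item Apply Lemma~\ref{2|P|^5>|L|^4_2222}(1), $|\mathcal P|>(s+1)^2(t+1)/2$, to get a small upper bound on $k$.
\item Use the $2$-parts to finish. We have $|\mathcal P|_2=q^4/8$ or $q^4/4$, while $|\mathcal L|_2=q^4/q_0^4$. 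Since $\gcd(s+1,t+1,st+1)$ is constrained and $st+1\equiv ABk^2-(A+B)k+2$, reducing $st+1$ modulo successive powers of $2$ (or of $q_0$) should determine $k$ explicitly, as in Lemma~\ref{diff_type_d}. Substituting that $k$ back should give an impossible polynomial identity in $q_0$.
\end{enumerate}
If the congruence route becomes messy, I would instead use Lemma~\ref{parameters}(2), $s+t\mid st(s+1)(t+1)$, to bound $s+t$ by a small multiple of $A$ and $B$. This is the same trick used for type (r) in Lemma~\ref{diff_type_d}.

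\textbf{Main obstacle.} I expect the main obstacle to be the case $r\ge3$. For $r=3$ the sizes $|\mathcal P|\sim q_0^{24}$ and $|\mathcal L|\sim q_0^{20}$ are compatible with Lemma~\ref{2|P|^5>|L|^4_2222}, so genuine number theory is needed: the $2$-adic valuations of $s+1$, $t+1$ and $st+1$ must be matched against $q^4/8$ and $q^4/q_0^4$. For larger $r$ we should have $|\mathcal L|^{4}>2|\mathcal P|^{5}$ roughly when $40(r-1)>40r$, which never happens. So the inequality alone is useless and the whole burden falls on the arithmetic, with a computer check for $q_0=2$.
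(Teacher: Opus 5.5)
Your treatment of $r=2$ is fine and matches the paper: Lemma~\ref{2|P|^5>|L|^4_2222}(2), plus a direct check of any small survivor. The gap is the case $r\ge 3$, which is the whole difficulty, and there your plan does not close.

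First, the $2$-parts give no contradiction. Write $q_0=2^m$. In type~(f) the valuations $v_2(t+1)=1$, $v_2(s+1)=4m-2$, $v_2(st+1)=4m(r-1)-1$ are consistent with $|\mathcal P|_2=q^4/8$, $|\mathcal L|_2=q^4/q_0^4$ and $\gcd(s+1,t+1,st+1)\mid 2$. The same holds in type~(i) with $v_2(s+1)=4m-1$.

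Second, the congruence template you want to imitate only works for a fixed small $r$ and large $q_0$. In the paper's type~(j)/(r) computations, the size inequality first forces $r\le 7$. The base-$q_0$ expansion of $k$ then gives a contradiction only once $q_0$ exceeds the coefficients that appear, which is why thresholds such as $q_0\ge 12000$ occur, with a computer search below them. Here none of this is available. As you note yourself, Lemma~\ref{2|P|^5>|L|^4_2222} does not bound $r$. Lemma~\ref{2|P|^5>|L|^4_2222}(1) only bounds $k$ by something of order $q_0^{2r-14/3}$. And $q_0$ may be $2$. So already the infinite family $q_0=2$, $q=2^r$ with $r$ an odd prime is handled neither by a size estimate nor by a finite computer check; your proposed check for $q_0=2$ cannot cover all $r$. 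Reducing modulo successive powers of $2$ merely determines $k$ modulo about $q_0^{4(r-1)}$, with no evident closed form in $r$ from which a contradiction could be read off. Falling back on Lemma~\ref{parameters}(2) does not help either: it yields only $s+t\mid 4AB(A-B)^2$, which does not conflict with the size of $s+t=(A+B)k-2$.

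The missing idea is the one the paper uses. It replaces the Diophantine problem in $(q_0,r)$ by a fixed-substructure argument for an element $g\in T$ of order $4$. By Lemma~\ref{Order_4_p=2}, $C_T(g)$ is abelian of order $2q^2$. For odd $r$, both $T$-classes of elements of order $4$ meet $T_\ell\cong\Sp_4(q_0)$, each in a single $T_\ell$-class.

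\emph{Type (f).} Here $T_\alpha$ has a unique class of elements of order $4$. So Lemma~\ref{CT(g) acts transitively} makes $C_T(g)$ transitive on $\mathcal P_g$ and on $\mathcal L_g$, of sizes $q^2/2$ and $q^2/q_0^2$. Lemma~\ref{is a subquadrangle}(2) then forces $C_T(g)$ to be nonabelian, a contradiction.

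\emph{Type (i).} Here $T_\alpha$ has two classes of elements of order $4$, each of size $q^2+1$. Either the same argument applies, or $C_T(g)$ has two orbits of size $q^2/2$ on $\mathcal P_g$ and is transitive on $\mathcal L_g$. In the latter case Lemma~\ref{lem:P-two-orbit-quadra} makes $\mathcal S_g$ a subquadrangle with $(s'+1)(s't'+1)=q^2$ and $(t'+1)(s't'+1)=q^2/q_0^2$. Since both are powers of $2$, elementary arithmetic forces $q=8$ and $q_0=2$, which a direct check excludes.

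This handles all odd primes $r$ uniformly, which your arithmetic route does not.
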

\begin{proof}
    Suppose, for a contradiction, that $T_\ell$ is of type (k). Then $q$ is even and
    $T_\ell \cong \PSp_4(q_0)$ with $q=q_0^r$ for some prime $r$.

    \smallskip
    \textbf{Case 1: $T_\a$ is of type (f).}
    Then $T_\a \cong \mathrm{C}_{q+\epsilon}^2:\mathrm{D}_8$ for $\epsilon=\pm1$.
    If $r = 2$, then applying Lemma~\ref{2|P|^5>|L|^4_2222}(2) together with the formulas for $|\mathcal{P}|$ and $|\mathcal{L}|$ from Table~\ref{tab:maximal_subgroups} yields a contradiction. Hence $r > 2$. Since $q$ is even, $\mathrm{C}_{q+\epsilon}^2:\mathrm{D}_8$ has exactly one conjugacy class of elements of order $4$, of size $2(q+\epsilon)^2$.
    Take $g \in T_\a$ of order $4$. Then $g^T \cap T_\a=g^{T_\a}$. As in the proof of Lemma \ref{lam:type_k}, all conjugacy classes of elements of order $4$ in $T$ are represented in $T_\ell$.
    Thus,  after conjugating in $T$ and replacing $\alpha$ by a suitable conjugate if necessary, we may assume $g \in T_\alpha \cap T_\ell$.
    Then $g^T\cap T_\ell=g^{T_\ell}$.

    Hence $C_T(g)$ acts transitively on both $\PA_g$ and $\LA_g$ by Lemma \ref{CT(g) acts transitively}.
    In particular,
    $|\PA_g|=\frac{|C_T(g)|}{|C_{T_\alpha}(g)|}=q^2/2>2$
    and
    $|\LA_g|=\frac{q^2}{q_0^2}>2$.
    By Lemma \ref{is a subquadrangle}, $\mc{S}_g$ is a subquadrangle, and \(C_T(g)\) is nonabelian.
    This contradicts Lemma \ref{Order_4_p=2}, which shows that $C_T(g)$ is abelian.
    Hence this case cannot occur.
    
    \smallskip
    \textbf{Case 2: $T_\a$ is of type (i).}
    Then $T_\a \cong \mathrm{C}_{q^2+1}:\mathrm{C}_4$ and we deduce that $r>2$ as in Case 1.
    By computation, $\mathrm{C}_{q^2+1}:\mathrm{C}_4$ has exactly two conjugacy classes of elements of order $4$, each of size $q^2+1$.
    Take $g \in T_\a\cap T_\ell$ of order $4$.
    Then $g^T \cap T_\ell =g^{T_\ell}$ as in the proof of Lemma \ref{lam:type_k}. And $g^T \cap T_\a=g^{T_\a}$ or $g^T \cap T_\a$ consists of all the elements of order $4$ in $T_\a$.

    If $g^T \cap T_\a=g^{T_\a}$, then we obtain a contradiction exactly as in Case~1.
    Therefore,
    $|g^T \cap T_\a|=2(q^2+1)$,
    and then by Lemma \ref{Fix_points_number} we have $|\PA_g|=q^2$.
    On the other hand, each orbit of $C_T(g)$ on $\PA_g$ has size $\frac{|C_T(g)|}{|C_{T_\alpha}(g)|}=q^2/2>2$.
    Hence $C_T(g)$ has exactly two orbits on $\PA_g$, of equal size.
    By Lemma \ref{lem:P-two-orbit-quadra}, $\mc{S}_g$ is a subquadrangle of order $(s',t')$ for some positive integers $s'$ and $t'$. Write $q_0=2^m$ with $m\geq 1$, so that $q=2^{mr}$.
    Thus
    \begin{equation}\label{type-f-k}
       |\PA_g|=(s'+1)(s't'+1)=2^{2mr},  |\LA_g|=(t'+1)(s't'+1)=2^{2m(r-1)} 
    \end{equation}
    Then $s'+1=2^{2m}(t'+1)$.
    Suppose that $t'+1=2^a$ with $a\geq 1$.
   Then $s'=2^{2m+a}-1$.
    Consequently,
    \[s't'+1=2^{2m+2a}-2^{2m+a}-2^a+2.\]
    If $a \geq 2$, then $s't'+1 \equiv 2 \pmod{4}$. Since $s't'+1$ is a power of $2$, we deduce that $s'=t'=1$. Then (\ref{type-f-k}) gives $4=2^{2mr}=2^{2m(r-1)}$, impossible.
    Hence $a=1$, so $t'=1$ and $s'+1=2^{2m+1}$.
    Substituting into (\ref{type-f-k}) gives
    $|\LA_g|=2^{2m+2}=2^{2m(r-1)}$,
    forcing $(m,r)=(1,3)$.
    Therefore, $q_0=2$ and $q=8$.
    In this case, $|\PA|=(s+1)(st+1)=4064256$ and $|\LA|=(t+1)(st+1)=1467648$ have no integer solutions  $(s,t)$.
    This completes the proof.
\end{proof}

\begin{lemma}\label{lem:type-f-i-n}
    The subgroup $T_\a$ cannot be of type (f), (i) or (n). 
\end{lemma}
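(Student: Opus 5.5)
We are in the different-type setting of this subsection, with $T_\alpha$ preceding $T_\ell$ alphabetically. Types (f), (i) and (n) occur only for even $q\ge4$. So $T_\ell$ is one of the even-$q$ types after $T_\alpha$: for (f) these are (g), (i), (k), (n), (s); for (i) they are (k), (n), (s); for (n) they are (s) and (k). The pair with $T_\ell$ of type (k) and $T_\alpha$ of type (f) or (i) is already excluded by Lemma~\ref{lem:T_a-f-i-and-T_l-k}. It remains to treat (n) with (k), and the pairs in which $T_\ell$ is of type (g), (i), (n) or (s).

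\textbf{Step 1: size bounds.} For each remaining pair I apply Lemma~\ref{2|P|^5>|L|^4_2222}(2) to the orders in Table~\ref{tab:maximal_subgroups}. The point counts for (f) and (i) are of order $q^{10}$, while those for (g), (n), (s) are of order $q^4$ or $q^5$. So $T_\alpha$ of type (f) or (i) with $T_\ell$ of type (g), (n) or (s) violates $|\mathcal L|>2^{-1/5}|\mathcal P|^{4/5}$ outright. The pair (f)/(i) gives comparable sizes of order $q^{10}$ and survives. For (n) with (s), the sizes $q^4/2$ and $q^5$ are compatible, since $5/4\cdot 4=5$. For (n) with (k), the estimate used in Lemma~\ref{lem:T_a-f-i-and-T_l-k}, namely $|\mathcal L|\ge q_0^{10(r-1)}$, forces $r=2$.

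\textbf{Step 2: arithmetic on the survivors.} Each survivor is handled as in earlier lemmas.
\begin{itemize}
\item (n) with (k), $r=2$: here $|\mathcal L|=q^2(q^2+1)(q+1)$, and the ratio $(s+1)/(t+1)=|\mathcal P|/|\mathcal L|$ is $(q^2+\epsilon)/(2(q^2+1)(q+1))$. I write $s+1$ and $t+1$ as $k$ times the reduced numerator and denominator. Lemma~\ref{2|P|^5>|L|^4_2222}(1) then bounds $k$, leaving a small equation to refute. This mirrors the (c)/(k) argument.
\item (n) with (s): the ratio is $(q^2+\epsilon)/(2(q+1)^2(q-1))$. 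Since $q$ is even, the gcd of numerator and denominator is small, so the same method applies: Lemma~\ref{2|P|^5>|L|^4_2222}(1), then the divisibility condition of Lemma~\ref{parameters}(2), with a finite computer check for small $q$.
\item (f) with (i): the point count is $q^4(q-\epsilon)^2(q^2+1)/8$ and the line count is $q^4(q^2-1)^2/4$. The ratio reduces to $(q-\epsilon)(q^2+1)/(2(q+\epsilon)^2)$ up to small gcd factors, and the same scheme applies.
\end{itemize}

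\textbf{Step 3 (main obstacle): pure arithmetic fails for (f)/(i).} I expect the bounds in the last bullet to be too weak, because the two sizes are genuinely close. In that case I switch to a fixed-substructure argument, as in Lemma~\ref{lem:T_a-f-i-and-T_l-k}.
\begin{enumerate}
\item Take $g$ of order $4$. By Lemma~\ref{Order_4_p=2}, $C_T(g)$ is abelian of order $2q^2$.
\item Type (f) has a single class of elements of order $4$. Type (i) has two such classes, of size $q^2+1$ each.
\item Conjugate so that $g\in T_\alpha\cap T_\ell$. On $\mathcal P_g$, Lemma~\ref{CT(g) acts transitively} applies. On $\mathcal L_g$, either it applies or $C_T(g)$ has two equal orbits of size $q^2/2$.
\item Counting gives $|\mathcal P_g|=q^2/2$ and $|\mathcal L_g|\in\{q^2/2,q^2\}$, with at most two orbits, so Lemma~\ref{is a subquadrangle} or Lemma~\ref{lem:P-two-orbit-quadra} makes $\mathcal S_g$ a subquadrangle.
\item In the transitive case, Lemma~\ref{is a subquadrangle}(2) contradicts the abelian centralizer. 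In the two-orbit case, the power-of-$2$ analysis of Lemma~\ref{lem:T_a-f-i-and-T_l-k} applies to $|\mathcal P_g|=(s'+1)(s't'+1)=q^2/2$ and $|\mathcal L_g|=q^2$. It forces $t'=1$ and a tiny $q$, which is then checked directly.
\end{enumerate}

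Some orbit-count bookkeeping may need adjustment. If Lemma~\ref{lem:P-two-orbit-quadra} does not apply because the two orbits lie on lines rather than points, I will use its dual, which holds by the same proof.
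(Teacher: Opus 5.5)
Your treatment of the pairs killed by Lemma~\ref{2|P|^5>|L|^4_2222}(2), and of the easy ratio cases, agrees with the paper. There is, however, a genuine gap in the one pair that needs real work: $T_\alpha$ of type (f) with $T_\ell$ of type (i). You expect the arithmetic to fail there and fall back on Step~3, but Step~3 breaks at ``conjugate so that $g\in T_\alpha\cap T_\ell$''.

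In Lemma~\ref{lem:T_a-f-i-and-T_l-k} that step was legitimate only because a type-(k) subfield subgroup (with $r$ odd) meets every unipotent class of $T$. A type-(i) subgroup does not. In fact the order-$4$ elements of type (f) and those of type (i) lie in the two \emph{different} $T$-classes of Lemma~\ref{Order_4_p=2}. One way to see this:
\begin{itemize}
\item For $u\in T$ of order $4$, put $R=\ker(u+1)^2$. Then $Q(x)=B(x,ux)$ vanishes on $R$ and induces a nondegenerate quadratic form on the $2$-space $V/R$. Whether this form is hyperbolic or elliptic is a conjugacy invariant.
\item All order-$4$ elements of $\mathrm{C}_{q+\epsilon}^2:\mathrm{D}_8=\mathrm{D}_{2(q+\epsilon)}\wr\mathrm{S}_2\le\mathrm{Sp}_2(q)\wr\mathrm{S}_2$ are conjugate to $u\colon e_1\mapsto e_2\mapsto e_1$, $f_1\mapsto f_2\mapsto f_1+e_1$. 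This is a swap of the nondegenerate planes $\langle e_1,f_1\rangle,\langle e_2,f_2\rangle$ times a transvection of the second plane. Here $R=\langle e_1,e_2\rangle$ and $Q(f_1)=B(f_1,f_2)=0$, so the form is hyperbolic.
\item The order-$4$ elements of $\mathrm{C}_{q^2+1}:4$ are conjugate to $x\mapsto x^{q^{\pm1}}$ on $\mathbb{F}_{q^4}$, with $B(x,y)=\mathrm{Tr}_{\mathbb{F}_{q^4}/\mathbb{F}_q}(xy^{q^2})$. Here $R=\mathbb{F}_{q^2}$ and $Q(x)=(x+x^{q^2})^{q+1}\neq 0$ for $x\notin R$, so the form is elliptic.
\item For $q=2$ the split is visible in $\mathrm{S}_6$: order-$4$ elements of $\mathrm{S}_3\wr\mathrm{S}_2$ have cycle type $(4,2)$, while those of $\mathrm{AGL}_1(5)$ are $4$-cycles.
\end{itemize}
Hence every $g\in T_\alpha$ of order $4$ has $g^T\cap T_\ell=\varnothing$, so $\mathcal{L}_g=\varnothing$ and $\mathcal{S}_g$ is merely a set of pairwise noncollinear points. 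Neither Lemma~\ref{is a subquadrangle} nor Lemma~\ref{lem:P-two-orbit-quadra} applies, and the abelian-centralizer contradiction is unavailable.

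The paper closes this case with exactly the arithmetic you set aside, and it works. Your ratio carries a spurious factor $q-\epsilon$: in fact $|\mathcal{P}|/|\mathcal{L}|=(q^2+1)/(2(q+\epsilon)^2)$, already in lowest terms since $q$ is even. For $\epsilon=1$ the argument runs as follows.
\begin{enumerate}
\item Write $s+1=(q^2+1)k$ and $t+1=2(q+1)^2k$.
\item Lemma~\ref{2|P|^5>|L|^4_2222}(1) applied to $|\mathcal{L}|$ gives $k<q/3$. Hence $st+1=q^4(q-1)^2/(8k)$ is divisible by $q^2$.
\item Reduce $st+1=(s+1)(t+1)-(s+1)-(t+1)+2$ modulo $q$ to get $2k^2-3k+2\equiv 0$, so $k$ is even.
\item Reduce modulo $q^2$ and subtract $2q(2k^2-3k+2)$ to get $2k^2+(2q-3)k+2-4q\equiv 0\pmod{q^2}$.
\item This is impossible, because for $2\le k<q/3$ that integer lies strictly between $0$ and $q^2$.
\end{enumerate}
The case $\epsilon=-1$ is analogous. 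So the closeness of $|\mathcal{P}|$ and $|\mathcal{L}|$ is no obstruction; the $2$-adic divisibility of $st+1$ does the work. A minor point: the point counts for types (f) and (i) are of order $q^8$, not $q^{10}$, but this does not change your Step~1 conclusions.
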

\begin{proof}
    We treat the three possibilities for $T_\a$ separately.

    \medskip
    \textbf{Case 1: $T_\a$ is of type (f).}
    Then $q$ is even and $|\PA|=q^4(q-\epsilon)^2(q^2+1)/8=(s+1)(st+1)$ with $\epsilon=\pm1$.
    We only consider $\epsilon=+1$, since the case $\epsilon=-1$ is analogous.

    By Lemma \ref{lem:T_a-f-i-and-T_l-k}, $T_\ell$ cannot be of type (k).
    Applying  Lemma \ref{2|P|^5>|L|^4_2222}(2) and the same reductions as in Lemma \ref{diff_type_d},
    we find that the only remaining possibility for $T_\ell$ consistent with the bounds is type (i). Hence we  assume $T_\ell$ is of type (i). 
    Then $|\LA|=q^4(q^2-1)^2/4=(t+1)(st+1)$. Note that $\gcd(s+1,t+1,st+1)$ divides $2$, and by the equation we deduce $(t+1)_2=2(s+1)_2$. Now
    $\frac{s+1}{t+1}=\frac{|\PA|}{|\LA|}=\frac{q^2+1}{2(q+1)^2}$. 
    Since $\gcd(q^2+1,2(q+1)^2)=1$, we may assume that $s+1=(q^2+1)k$ and $t+1=2(q+1)^2k$ for some positive integer $k$. If $k\geq q/3$, then by Lemma \ref{2|P|^5>|L|^4_2222}(1),
    \[q^4(q^2-1)^2/4=|\LA|\geq (t+1)^2(s+1)/2 \geq 2q^3(q+1)^4(q^2+1)/27,\]
    which is false for all $q\geq 4$. Hence $k<q/3$. Note that $(st+1)_p\geq q^2$.  Reducing $st+1$ modulo $q$ and modulo $q^2$ gives
\[
2k^2 - 3k + 2 \equiv 0 \pmod{q}, \qquad
(4q+2)k^2 - (4q+3)k + 2 \equiv 0 \pmod{q^2}.
\]
The first congruence implies that $k$ is even. Multiplying the first congruence by $2q$ gives $4qk^2 - 6qk + 4q \equiv 0 \pmod{q^2}$.   Subtracting this from the second congruence yields
\[
2k^2 + (2q-3)k + 2 - 4q \equiv 0 \pmod{q^2}.  
\]
Since $2\leq k<q/3$  we have the estimate $0<2k^2+(2q-3)k+2-4q<2q^2/9+2q^2/3<q^2$, which gives a contradiction.
Therefore $T_\alpha$ cannot be of type (f).
    
    \medskip
    \textbf{Case 2: $T_\a$ is of type (i).}
    Then $q$ is even and $|\PA|=q^4(q^2-1)^2/4$.
    By Lemma \ref{lem:T_a-f-i-and-T_l-k} and the restrictions on $q$, $T_\ell$ can only be of type (n) or (s).
    Both remaining possibilities are excluded by applying  Lemma \ref{2|P|^5>|L|^4_2222}(2).
    Hence $T_\a$ cannot be of type (i).

    \medskip
    \textbf{Case 3: $T_\a$ is of type (n).}
    Then $|\PA|=q^2(q^2+\epsilon)/2$ with $\epsilon=\pm1$.
    Since $q$ is even, $T_\ell$ can only be of type (s) and $|\LA|=q^2(q+1)^2(q-1)$. If $\epsilon=-1$, then $t+1=2(q+1)(s+1)$. It follows from Lemma \ref{parameters}(3) that $t+1\leq(s+1)^2$, so $s+1\geq 2(q+1)$ and $t+1\geq 4(q+1)^2$.
    By Lemma \ref{2|P|^5>|L|^4_2222}(1),
    \[q^2(q^2-1)/2=|\PA|\geq (s+1)^2(t+1)/2\geq 8(q+1)^4,\]
    which is impossible.
    The case $\epsilon=+1$ is ruled out analogously, and we omit the details. This completes the proof.
\end{proof}

\begin{lemma}
    The subgroup $T_\a$ cannot be of type (j) or (k).
\end{lemma}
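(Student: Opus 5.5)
We are in the different-type setting with $T_\alpha$ preceding $T_\ell$ alphabetically. Types (a)--(i) for $T_\alpha$ have already been handled in this subsection, and all pairs with $T_\alpha$ of type (f) or (i) have been excluded. So when $T_\alpha$ is of type (j) or (k), we only need to treat $T_\ell$ of a later type. For type (j) this means $T_\ell$ in (l),(m),(o),(p),(q),(r) with $q$ odd. For type (k) this means $T_\ell$ of type (n) or (s) with $q$ even. Note that $T_\ell$ of type (k) with $T_\alpha$ of type (j) is impossible, since (j) needs $q$ odd and (k) needs $q$ even.

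\textbf{Type (j).} Here $T_\alpha\cong \PSp_4(q_0).(2,r)$ with $q=q_0^r$, and
\[|\PA|=\frac{q^4(q^4-1)(q^2-1)}{q_0^4(q_0^4-1)(q_0^2-1)(2,r)}.\]
For $T_\ell$ of types (l)--(q), $|\LA|=|T|/c$ with $c\le 3840$ a constant, so $|\LA|\approx q^{10}/c$, while $|\PA|<q^{10}/q_0^{10}$.

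\begin{itemize}
\item Lemma~\ref{2|P|^5>|L|^4_2222}(2) requires $|\LA|<2^{1/4}|\PA|^{5/4}$. Since $|\PA|^{5/4}$ is roughly $q^{25/2}q_0^{-25/2}$, with $q_0^{r}=q$, this forces $q^{10}\lesssim c\,q^{25/2}q_0^{-25/2}$. That fails unless $r\ge 2$ is accompanied by a tight inequality, and it leaves only small values of $q$.
\item More precisely, the lower bound $2^{-1/5}|\PA|^{4/5}<|\LA|$ is harmless. The useful inequality is the upper one, which gives $q_0^{25/2}\lesssim c\,q^{5/2}$, i.e.\ $q_0^{25/2-5r/2}\lesssim c$. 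So $r\ge 5$ is automatically fine, and I will need to recheck which direction actually bites.
\item Since the types (l)--(q) require $q=p$ prime, while type (j) requires $q=q_0^r$ with $r$ prime, these pairs are impossible outright. So for type (j) only type (r) remains, with $|\LA|=q^3(q^4-1)$.
\item For $T_\ell$ of type (r), compare the two sides of Lemma~\ref{2|P|^5>|L|^4_2222}(2). We have $|\PA|>q_0^{10(r-1)}/2$ and $|\LA|<q^7$, so $|\PA|^{4/5}<2^{1/5}|\LA|$ gives $q_0^{8(r-1)}\lesssim q_0^{7r}$, i.e.\ $r\le 8$. This leaves $r\in\{2,3,5,7\}$.
\item For these finitely many $r$, write $\frac{s+1}{t+1}=\frac{|\PA|}{|\LA|}$ in lowest terms, $s+1=Ak$, $t+1=Bk$, and bound $k$ via Lemma~\ref{2|P|^5>|L|^4_2222}(1).
\item Then use the $p$-adic constraint that $(st+1)_p$ is large, exactly as in Case 1 of Lemma~\ref{diff_type_d}, to pin down $k$ modulo $q_0$ and $q_0^2$. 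A parity or quadratic obstruction should then finish, with a computer check for small $q_0$.
\end{itemize}

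\textbf{Type (k).} Here $q$ is even and $|\PA|=\frac{q^4(q^4-1)(q^2-1)}{q_0^4(q_0^4-1)(q_0^2-1)}$.

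\begin{itemize}
\item For $T_\ell$ of type (n), $|\LA|\approx q^4/2$. Then $|\PA|^{4/5}<2^{1/5}|\LA|$ forces $q_0^{8(r-1)}\lesssim q_0^{4r}$, so $r=2$. In that case $|\PA|=q^2(q^2+1)(q+1)$ and $|\LA|=q^2(q^2+\epsilon)/2$.
\begin{itemize}
\item For $\epsilon=1$, $\frac{s+1}{t+1}=2(q+1)$. Lemma~\ref{parameters}(3) then gives $s+1\le (t+1)^2$, so $t+1\ge 2(q+1)$. Lemma~\ref{2|P|^5>|L|^4_2222}(1) applied to $|\LA|$ should then yield a contradiction, mirroring Case 3 of Lemma~\ref{lem:type-f-i-n}.
\item For $\epsilon=-1$, $\frac{s+1}{t+1}=\frac{2(q^2+1)(q+1)}{q^2-1}=\frac{2(q^2+1)}{q-1}$. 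Write $s+1=2(q^2+1)k$ and $t+1=(q-1)k$. Again $s+1\le (t+1)^2$ forces $k\ge 2$, and Lemma~\ref{2|P|^5>|L|^4_2222}(1) for $|\LA|$ gives the contradiction.
\end{itemize}
\item For $T_\ell$ of type (s), $|\LA|\approx q^5$. The same inequality gives $q_0^{8(r-1)}\lesssim q_0^{5r}$, so $r\le 2$, i.e.\ $r=2$. But type (s) requires $q=2^f$ with $f$ odd, while $r=2$ makes $f$ even, so this case is impossible.
\end{itemize}

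The main obstacle I expect is type (j) paired with type (r) for $r=2,3$. There the size inequalities do not eliminate the case, and a congruence analysis of $st+1$ modulo powers of $q_0$ is needed, as in Lemma~\ref{diff_type_d}. If that analysis stalls, I would fall back on fixed-point methods. One option is an element of order $p$ with abelian centralizer (Lemma~\ref{Conj_Order=p}) when $p\ge5$, combined with Lemma~\ref{is a subquadrangle}. The other is the involution class $Y_1$, using the fact that $\PSL_2(q)$ has a single involution class, to apply Lemma~\ref{lem:no_2_3_elemen}-type counting or Benson's theorem (Lemma~\ref{lem:benson}).
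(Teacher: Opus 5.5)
\textbf{Gap: the case $T_\ell$ of type (r) with $r\in\{3,5,7\}$ is not proved.}

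Several parts of your proposal are correct and match the paper:
\begin{itemize}
\item types (l)--(q) are incompatible with $q=q_0^r$;
\item type (r) forces $r\in\{2,3,5,7\}$;
\item your treatment of type (k) is complete. Your disposal of (k)/(s), where $r=2$ would make $f$ even, is tidier than the paper's ``similar''.
\end{itemize}

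The remaining (r) case is the real content of the paper's proof, and you leave it at ``should then finish''. The mechanism you predict cannot finish it. Write $s+1=Ak$, $t+1=Bk$, $st+1=x/k$; since $\gcd(k,q_0)=1$, everything reduces to
\[
E(k)=\alpha\beta k^2-d(\alpha+\beta)k+2d^2
\]
vanishing modulo a high power of $q_0$. Because $E'(k)\equiv\pm d$ is a unit modulo $q_0$, this congruence has exactly one solution modulo every power of $q_0$. So congruences alone never give a contradiction. Any parity or size argument needs $k$ known exactly, i.e.\ modulo some $q_0^j$ exceeding the bound from Lemma~\ref{2|P|^5>|L|^4_2222}(1). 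That bound is $k<q_0^3$, $k<7q_0^6$, $k<9q_0^9$ for $r=3,5,7$, so knowing $k$ modulo $q_0$ and $q_0^2$ decides nothing.

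What the paper actually does:
\begin{itemize}
\item For $r=3$ it lifts to modulus $q_0^4$ and gets $q_0\mid 18d^2$, impossible as $p\ge 5$.
\item For $r=5$ and $r=7$ it lifts to $q_0^7$ and $q_0^{10}$. The forced residue is shown to exceed the bound on $k$ for $q_0\ge 12000$, resp.\ $q_0>203$; the smaller $q_0$ are checked by machine.
\end{itemize}
You also misplace the difficulty: $r=2$ is easy. There $t+1=2q_0^2(q_0^2-1)(s+1)$, and $t\le s^2$ makes $(s+1)(st+1)$ far exceed $|\PA|=q_0^4(q_0^4+1)(q_0^2+1)/2$.

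\textbf{How to close the gap.} Your first fallback is the cleanest route, but as written it is only a suggestion and omits the facts it needs.
\begin{itemize}
\item Take $g\in T$ regular unipotent (Jordan type $(\pm 4)$), so $C_T(g)$ is abelian of order $q^2$.
\item For odd $r$, $g^T\cap T_\alpha$ is a single $T_\alpha$-class, as in Lemma~\ref{typr_j}.
\item $g^T\cap T_\ell$ is a single $T_\ell$-class. Here $T_\ell\cong\PSL_2(q)$ sits in $T$ via the symmetric cube. Its two classes of elements of order $p$ are swapped by a similitude of non-square multiplier, and that similitude also swaps the two regular unipotent classes of $T$.
\item Lemma~\ref{CT(g) acts transitively} then makes $C_T(g)$ transitive on $\PA_g$ and $\LA_g$, with $|\PA_g|=q^2/q_0^2$ and $|\LA_g|=q$.
\item Lemma~\ref{is a subquadrangle} now contradicts the abelianness of $C_T(g)$, for every odd $r$ at once and with no computer search.
\end{itemize}
Your second fallback does not apply. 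Here $s\neq t$, and $T_\alpha$ meets both involution classes of $T$, so Lemma~\ref{lem:no_2_3_elemen} is unavailable.
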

\begin{proof} We treat the two possibilities for $T_\a$ separately. 

\medskip
\textbf{Case 1: $T_\a$ is of type (j).} Since the restrictions imply that $q$ is not prime, whereas $q$ must be prime if $T_\ell$ is of type (l), (m), (o), (p), or (q), we conclude that $T_\ell$ must be of type (r). In this case
$|\PA|=\frac{q^4(q^4-1)(q^2-1)}{q_0^4(q_0^4-1)(q_0^2-1)\cdot\gcd(2,r)}$ and $|\LA|=q^3(q^4-1)$, where $q=q_0^r$ with $r$ prime. By Lemma~\ref{2|P|^5>|L|^4_2222}(2),
$q_0^{8(r-1)}/2<\frac{1}{2^{1/5}}|\PA|^{4/5}<|\LA|<q_0^{7r}$, which yields $r<9$. Thus $r\in\{2,3,5,7\}$. The case $r=2$ is excluded by Lemma~\ref{2|P|^5>|L|^4_2222}(1). We treat the remaining cases separately.

Assume $r=3$, so $q=q_0^3$. Let $d=\gcd(q_0^2+2,3)$, $A=(q_0^4+q_0^2+1)/d$, $B=q_0(q_0^4-1)/d$, and $x=q_0^8(q_0^8+q_0^4+1)d$. Then
$|\PA|=Ax$ and
$|\LA|=Bx$, so $\frac{s+1}{t+1}=\frac{A}{B}$. Since $\gcd(A,B)=1$, write $s+1=Ak$ and $t+1=Bk$ with $k>0$, whence $st+1=x/k$ and $k\mid x$. We first rule out the cases $q_0<9$ by solving equations (\ref{number_point}) and (\ref{number_line}). Thus we assume $q_0\geq 9$. By Lemma~\ref{2|P|^5>|L|^4_2222}(1), we have $Ax>(s+1)^2(t+1)/2=A^2Bk^3/2$ and hence $k<q_0^3$ for $q_0\geq 9$. Write $k=mq_0^2+aq_0+b$ with $0 \leq m,a,b<q_0$.
Since $\gcd(s+1,t+1,st+1)$ divides $2$ and $q_0$ is odd, we conclude $\gcd(q_0,k)=1$, thus we have $st+1\equiv0\pmod{q_0^4}$. As
$st+1=(Ak-1)(Bk-1)+1=ABk^2-(A+B)k+2$, letting $\alpha=q_0^4+q_0^2+1$ and $\beta=q_0(q_0^4-1)$ (so $A=\alpha/d$, $B=\beta/d$) and multiplying by $d^2$ gives
\begin{equation}\label{jjjjj}
    \alpha\beta k^2-d(\alpha+\beta)k+2d^2\equiv0\pmod{q_0^4}.
\end{equation}
Reducing \eqref{jjjjj} modulo $q_0$ yields $b \equiv 2d \pmod{q_0}$; modulo $q_0^2$ yields $a \equiv -2d \pmod{q_0}$; modulo $q_0^3$ yields $m \equiv 4d \pmod{q_0}$.  Reducing modulo \(q_0^4\) gives \(q_0\mid18d^2\). Since \(d\mid3\) and \(p\geq5\), we have \(\gcd(q_0,d)=1\), and hence \(q_0\mid18\). This is impossible, because \(q_0\) is a power of the characteristic prime \(p\geq5\).

 Assume $r=5$, so $q=q_0^5$. Let $d=\gcd(q_0^2-1,5)$, $A=q_0(q_0^8+q_0^6+q_0^4+q_0^2+1)/d$, $B=(q_0^4-1)/d$, and $x=q_0^{15}(q_0^{16}+q_0^{12}+q_0^8+q_0^4+1)d$. Then $|\PA|=Ax$, $|\LA|=Bx$, and $\gcd(A,B)=1$. As above, write $s+1=Ak$ and $t+1=Bk$ with $k>0$, whence $st+1=x/k$ and $k\mid x$. We first rule out the cases $q_0<12000$ by solving equations (\ref{number_point}) and (\ref{number_line}). Thus we assume $q_0\geq12000$. Lemma \ref{2|P|^5>|L|^4_2222}(1) yields $k<7q_0^6$. As in the case $r=3$, we have $\gcd(k,q_0)=1$, and hence $q_0^{15}\mid st+1$. Let $\alpha=q_0(q_0^8+q_0^6+q_0^4+q_0^2+1)$ and $\beta=q_0^4-1$. Since $A=\alpha/d$ and $B=\beta/d$, we obtain 
\begin{equation}\label{alpha_beta.....}
\alpha\beta k^2-d(\alpha+\beta)k+2d^2\equiv0\pmod{q_0^7}. 
\end{equation}
Reducing the left-hand side of (\ref{alpha_beta.....}) modulo $q_0$, we have $k\equiv-2d\pmod{q_0}$. Then we successively reduce this congruence modulo powers of $q_0$. Suppose that $k\equiv k_i\pmod{q_0^i}$ has been determined. Writing $k=k_i+c_iq_0^i$, substituting it into the congruence, and reducing modulo $q_0^{i+1}$ uniquely determines $c_i$ modulo $q_0$. Repeating this process gives $k\equiv d(-2202q_0^6+466q_0^5-104q_0^4+24q_0^3-6q_0^2+2q_0-2)\pmod{q_0^7}$. Set $R=q_0^7+d(-2202q_0^6+466q_0^5-104q_0^4+24q_0^3-6q_0^2+2q_0-2)$. Since $d\leq5$ and $q_0\geq12000$, we have $7q_0^6<R<q_0^7$. Since $0<k<7q_0^6<q_0^7$ and $k\equiv R\pmod{q_0^7}$, we must have $k=R$, a contradiction. Hence $r=5$ is impossible. 

Assume finally that $r=7$, so $q=q_0^7$. Let $d=\gcd(q_0^2-1,7)$, $A=q_0^3(q_0^{12}+q_0^{10}+q_0^8+q_0^6+q_0^4+q_0^2+1)/d$, $B=(q_0^4-1)/d$, and $x=q_0^{21}(q_0^{24}+q_0^{20}+q_0^{16}+q_0^{12}+q_0^8+q_0^4+1)d$. Then $|\PA|=Ax$, $|\LA|=Bx$, and $\gcd(A,B)=1$. Again, write $s+1=Ak$ and $t+1=Bk$ with $k>0$, whence $st+1=x/k$ and $k\mid x$. A direct computation excludes all cases with $q_0\leq 203$, so we assume $q_0>203$. Lemma~\ref{2|P|^5>|L|^4_2222}(1) gives $k<9q_0^9$. As above, $\gcd(k,q_0)=1$, and hence $q_0^{21}\mid st+1$. Let $\alpha=q_0^3(q_0^{12}+q_0^{10}+q_0^8+q_0^6+q_0^4+q_0^2+1)$ and $\beta=q_0^4-1$. Then $\alpha\beta k^2-d(\alpha+\beta)k+2d^2\equiv0\pmod{q_0^{10}}$. Successively reducing this congruence modulo powers of $q_0$ gives $k\equiv d(28q_0^9-14q_0^8+6q_0^7-6q_0^6+2q_0^5-2q_0^4+2q_0^3-2)\pmod{q_0^{10}}$. Since $d\leq7$ and $q_0>203$, we have $9q_0^9<d(28q_0^9-14q_0^8+6q_0^7-6q_0^6+2q_0^5-2q_0^4+2q_0^3-2)<q_0^{10}$. This contradicts $0<k<9q_0^9$. Therefore $r=7$ is also impossible.

      \medskip
    \textbf{Case 2: $T_\a$ is of type (k).} Then $q=q_0^r$ is even with $r$ prime.
    By the restrictions on $q$, $T_\ell$ can only be of type (n) or (s). We treat type (n), as the other case is similar.
    If $T_\ell$ is of type (n), then $|\LA|=q^2(q^2+\epsilon)/2<q^4$ for $\epsilon=\pm1$.
    By Lemma \ref{2|P|^5>|L|^4_2222}(2), we have $\frac{1}{2^{1/5}}q_0^{8(r-1)}<\frac{1}{2^{1/5}}|\PA|^{4/5}<|\LA|<q_0^{4r}$.
    If $r\geq 3$, then $q_0^{4r-8}<2^{1/5}$, which is impossible. Hence $r=2$.
    In particular, $|\PA|=q^2(q^2+1)(q+1)$, and this case can be excluded by the same argument as in \textbf{Case 3} of Lemma \ref{lem:type-f-i-n}.
\end{proof}

\begin{lemma}
    The subgroup $T_\a$ cannot be of type (l), (m), (o), (p), or (q).
\end{lemma}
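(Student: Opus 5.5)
We are in the different-type setting of Subsection~\ref{section:differnet-type}, with $T_\alpha$ preceding $T_\ell$ alphabetically. So if $T_\alpha$ is of type (l), (m), (o), (p) or (q), then $T_\ell$ is one of the later types (m)--(s), of a different type from $T_\alpha$. The first step is to use the arithmetic conditions in Table~\ref{tab:maximal_subgroups} to discard most pairs. Types (l), (m), (o), (p), (q) require $q=p$ to be an odd prime. Type (s) requires $q$ even, so it is excluded at once. Type (q) forces $q=7$, and $7\equiv 7\pmod 8$ and $7\equiv 7\pmod{12}$ are incompatible with (l), (m), (p) and with the exclusion $q\ne 7$ in (o). Similarly, (l) and (m) cannot both hold since $q\equiv\pm1$ and $q\equiv\pm3\pmod 8$ are disjoint, and (o) and (p) cannot both hold for the analogous reason modulo $12$.

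The remaining pairs are (l)/(o), (l)/(p), (l)/(r), (m)/(o), (m)/(p), (m)/(r), (o)/(r) and (p)/(r).

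For the pairs not involving (r), both $|T_\alpha|$ and $|T_\ell|$ are bounded constants ($3840$, $1920$, $720$, $1440$). Hence
\[
\frac{|\mathcal P|}{|\mathcal L|}=\frac{|T_\ell|}{|T_\alpha|}\in\{2,\tfrac12,\tfrac38,\ldots\}
\]
is a fixed rational number, while $|\mathcal P|$ grows like $q^{10}$. Lemma~\ref{2|P|^5>|L|^4_2222}(2) requires $|\mathcal L|$ to lie between $2^{-1/5}|\mathcal P|^{4/5}$ and $2^{1/4}|\mathcal P|^{5/4}$, which holds for a constant ratio, so it gives no bound here. I would instead argue arithmetically, as in Lemma~\ref{diff_type_d}.

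Write $(s+1)/(t+1)=a/b$ in lowest terms, so $s+1=ak$ and $t+1=bk$ with $a,b$ bounded. Then Lemma~\ref{2|P|^5>|L|^4_2222}(1) gives
\[
|\mathcal P|>(s+1)^2(t+1)/2=a^2bk^3/2,
\]
so $k\ll q^{10/3}$. Since $\gcd(s+1,t+1,st+1)\mid 2$ and $q$ is odd, and $k$ is small compared with $q^4$, the factor $q^4$ must essentially divide
\[
st+1=abk^2-(a+b)k+2.
\]
I will then run the $q$-adic digit reduction used in Lemma~\ref{diff_type_d} and in the type (j) case: reduce modulo $q$, then $q^2$, and so on, until $k$ is pinned modulo $q^4$ to an explicit residue that lies outside the allowed range. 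Small $q$ are handled by direct computation. Alternatively, Lemma~\ref{parameters}(2), $s+t\mid st(s+1)(t+1)$, can be used as in the (d)/(h) argument to bound $s+t$ by a polynomial in the fixed constants, again leaving only small $q$.

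For the pairs with $T_\ell$ of type (r), we have $|\mathcal L|=q^3(q^4-1)$ and $|\mathcal P|=q^4(q^4-1)(q^2-1)/c$ with $c\in\{3840,1920,720,1440\}$. Lemma~\ref{2|P|^5>|L|^4_2222}(2) is satisfied asymptotically here, since $q^{10}$ against $q^7$ is compatible with the exponent window $[4/5,5/4]$. So again I use the ratio
\[
\frac{s+1}{t+1}=\frac{q(q^2-1)}{c}.
\]
Setting $s+1=q(q^2-1)k/(c\cdot\text{gcd})$ and $t+1=k'$, the bound $|\mathcal L|>(t+1)^2(s+1)/2$ forces $t+1$ to be tiny, roughly $t+1\ll q^{2}$. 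On the other hand, Lemma~\ref{parameters}(3) gives $s\le t^2$, so $q^3/c\lesssim s+1\le (t+1)^2$. Combining these with $(s+1)(st+1)=|\mathcal P|$ and the $p$-part distribution (with $q^4$ splitting between $s+1$ and $st+1$, and $q^3$ between $t+1$ and $st+1$) should yield a contradiction for large $q$. The remaining small $q$, which are primes in a few residue classes, are checked by computer.

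I expect this (r) case, or the digit-reduction in the bounded-ratio case, to be the main obstacle, since the parameter inequalities alone are not decisive and one needs the careful $p$-adic bookkeeping of $st+1$.

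As a fallback for the cases (l)/(o), (m)/(o), (l)/(r) and similar, I would try a fixed-element argument in the style of Lemma~\ref{lem:no_2_3_elemen}. If some involution or order-$3$ class of $T$ misses both $T_\alpha$ and $T_\ell$, then $\mathcal P_g=\mathcal L_g=\varnothing$, and Benson's theorem (Lemma~\ref{lem:benson}) modulo $s+t$ gives a contradiction when combined with $\mathcal L_1(g)=\varnothing$. By Lemma~\ref{Conj_order=3}, $T$ has two classes of elements of order $3$, while $\mathrm{A}_6$ and $\PSL_2(q)$ each have a single class of involutions, so this is worth testing pair by pair.
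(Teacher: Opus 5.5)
\textbf{Main gap: the pairs with $T_\ell$ of type (r).} Your claim that Lemma~\ref{2|P|^5>|L|^4_2222}(2) is asymptotically satisfied there is false. We have $|\mathcal L|=q^3(q^4-1)\approx q^7$, while $2^{-1/5}|\mathcal P|^{4/5}\approx q^8/c^{4/5}$, and $7/10<4/5$. So the lower bound $|\mathcal L|>2^{-1/5}|\mathcal P|^{4/5}$ fails once $q$ exceeds roughly $2^{1/5}c^{4/5}$; for $c=3840$ this gives $q\le 839$. What remains is a finite computer check over primes, which is how the paper disposes of (r).

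Your substitute argument does not close as written, because you dropped a factor $t+1$. Since $s+1=(t+1)q(q^2-1)/c$, the bound $s\le t^2$ gives $(t+1)q(q^2-1)/c<(t+1)^2$, i.e.\ $t+1>q(q^2-1)/c$. You wrote only $q^3/c\lesssim(t+1)^2$, which is compatible with your upper estimate $t+1\ll q^2$, and then deferred to an unspecified $p$-part argument. Done correctly, combine $t+1>q(q^2-1)/c$ with $(t+1)^3<2cq^2(q^2+1)$, which is Lemma~\ref{2|P|^5>|L|^4_2222}(1) applied to $|\mathcal L|$. This gives $q(q^2-1)^3<2c^4(q^2+1)$, essentially the same bound on $q$ as the paper's.

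\textbf{Incomplete case list.} Since $7\equiv-1\pmod 8$, type (q) is compatible with type (l). So the pair (l)/(q) at $q=7$, with $|\mathcal P|=72030$ and $|\mathcal L|=54880$, must be checked. The pair (q)/(r) at $q=7$ is also missing from your list. Both fall to a short finite computation, but they have to be included.

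\textbf{Bounded-ratio pairs.} Your alternative via Lemma~\ref{parameters}(2) is exactly the paper's argument. With $s+1=ak$ and $t+1=bk$ one gets $(a+b)k-2\mid 4ab(b-a)^2$; for example, $19k-2\mid 32448$ for (l)/(o). Use this rather than the $q$-adic digit reduction. With $a,b$ constant, the roots of $abk^2-(a+b)k+2\equiv 0\pmod{q^4}$ are not explicit polynomial residues in $q$, as they were in type (j). So the reduction does not produce a residue you can show lies outside $0<k\ll q^{10/3}$.

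\textbf{Benson fallback.} The argument of Lemma~\ref{lem:no_2_3_elemen} uses $s=t$. Here $|\mathcal P|\ne|\mathcal L|$, so $s\ne t$, and Benson's congruence only yields $s+t\mid st+1$, i.e.\ $s+t\mid t^2-1$. That is not contradictory by itself.
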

\begin{proof}
We give the details for type (l) and the arguments for types (m), (o), (p) and (q) are analogous and are omitted. Assume that $T_\a$ is of type (l). Then $|\PA|=q^4(q^4-1)(q^2-1)/3840$, where $q=p\equiv \pm1 \pmod{8}$.  From Table \ref{tab:maximal_subgroups}, the possibilities for $T_\ell$ are types (o), (p), (q) and (r). If $T_\ell$ is of type (o), then $|\LA|=q^4(q^4-1)(q^2-1)/720$ and hence $\frac{s+1}{t+1}=\frac{|\PA|}{|\LA|}=\frac{3}{16}$. Thus we may assume that $s+1=3k$ and $t+1=16k$ for some positive integer $k$. Then $s+t=19k-2$ and $st(s+1)(t+1)=48k^2(3k-1)(16k-1)$. Note that $\gcd(19k-2,3k-1)=\gcd(19k-2,16k-1)$ divides 13. Applying Lemma \ref{parameters}(2), we obtain $19k-2 \mid 48\cdot 4\cdot 13^2=32448$, and therefore $k \in \{9,22,854\}$. For these values, we check that there are no integers $(s, t)$
that satisfy both  (\ref{number_point}) and (\ref{number_line}). 
The same calculation excludes $T_\ell$ of type (p) and (q), since the corresponding expressions for $|\LA|$ produce an analogous index computation and again lead to incompatible values. If $T_\ell$ is of type (r), then $|\mathcal{L}| = q^3(q^4-1)$. By Lemma \ref{2|P|^5>|L|^4_2222}(2), we deduce that $q\leq839$. A computer check shows that for all such $q$ there are no pairs $(s,t)$ satisfying both  (\ref{number_point}) and (\ref{number_line}), a contradiction. Therefore, $T_\a$ cannot be of type (l). This completes the proof.
\end{proof}

\begin{proof}[Proof of Theorem \ref{main}]
Let $G\le \Aut(\mc S)$ act primitively on points and lines of $\mc{S}$, and suppose that $\mathrm{Soc}(G)=T=\PSp_4(q)$ with $q\geq3$. For a point $\alpha\in\mc P$ and a line $\ell\in\mc L$, the stabilizers $G_\alpha$ and $G_\ell$ are maximal in $G$, so $T_\alpha=T\cap G_\alpha$ and $T_\ell=T\cap G_\ell$  occur in Table \ref{tab:maximal_subgroups}. If $T_\alpha$ and $T_\ell$ are of the same type, then the results of Section~\ref{section:same-type}, together with Lemma~\ref{W(3,q)-q-even}, show that $\mc S$ is the classical symplectic generalized quadrangle $W(3,q)$ with $q$ even. If $T_\alpha$ and $T_\ell$ are of different types, then the results of Section \ref{section:differnet-type}, together with Lemmas \ref{W(3,q)-q-odd} and \ref{H(3,4)}, show that, up to duality, either $\mc S\cong W(3,q)$ for $q$ odd, or $q=3$ and $\mc S\cong H(3,4)$. 
\end{proof}

\section*{Acknowledgments}
The authors thank Hanlin Zou for pointing out an error in an earlier version. This research was supported by National Key R$\&$D Program of China under grant number
2025YFA1017700. The second author was supported by the National Natural Science Foundation of China (No. 12501467) and Innovation Research Foundation of College of Science at National University of Defense Technology (202501-YJRC-LXY-02).

\section*{Data availability} This study has no associated data.

\begingroup
\sloppy
\bibliographystyle{abbrv}  
\bibliography{references}
\endgroup

\end{document}